\documentclass[a4paper,11pt,reqno]{amsart}
\usepackage[letterpaper, margin=1 in]{geometry}
\usepackage{caption} 
\captionsetup[table]{skip=5pt}

\usepackage{amsthm}
\usepackage{bm}

\setcounter{tocdepth}{1}

\pagestyle{headings}

\newtheorem{theorem}{Theorem}[section]

\newtheorem{lemma}[theorem]{Lemma}

\newtheorem{corollary}[theorem]{Corollary}

\newtheorem{proposition}[theorem]{Proposition}

\theoremstyle{definition}
\newtheorem{definition}[theorem]{Definition}
\newtheorem{example}[theorem]{Example}
\newtheorem{remark}[theorem]{Remark}

\usepackage{adjustbox}
\linespread{1.3}
\usepackage{float}
\usepackage{amssymb,amsfonts,amsthm,amsmath,amsopn,amstext,amscd,latexsym,color}
\usepackage[all]{xy}
\usepackage{mathtools}

\usepackage{enumerate}

\xyoption{web}

\usepackage[utf8]{inputenc}



\newcommand{\bbA}{{\mathbb A}}

\newcommand{\bbG}{{\mathbb G}}

\newcommand{\bbN}{{\mathbb N}}

\newcommand{\bbQ}{{\mathbb Q}}

\newcommand{\bbZ}{{\mathbb Z}}
\usepackage{diagbox}
\usepackage{longtable}
\DeclareMathOperator{\Pic}{Pic}

\DeclareMathOperator{\lcm}{lcm}

\usepackage{hyperref}

\usepackage{tikz-cd}

\usepackage{mathtools}

\usepackage{longtable}
\usepackage{verbatim}

\raggedbottom
\usepackage[OT2,T1]{fontenc}
\DeclareSymbolFont{cyrletters}{OT2}{wncyr}{m}{n}
\DeclareMathSymbol{\Sha}{\mathalpha}{cyrletters}{"58}




\usepackage{graphics}

\newcommand{\Q}{\mathbb{Q}}
\newcommand{\Z}{\mathbb{Z}}

\usepackage{enumitem}

\arraycolsep 1pt
\numberwithin{equation}{section}
\newcommand{\code}[1]{\texttt{#1}}


\DeclareMathOperator{\Gal}{Gal}
\DeclareMathOperator{\Br}{Br}
\DeclareMathOperator{\im}{Im}
\DeclareMathOperator{\Ker}{Ker}
\DeclareMathOperator{\Coker}{Coker}

\DeclareMathOperator{\Hom}{Hom}

\DeclareMathOperator{\Res}{Res}

\DeclareMathOperator{\Cor}{Cor}


\author{\sc André Macedo and Rachel Newton}

\address{André Macedo\\
University of Reading\\
Department of Mathematics and Statistics\\ 
Pepper Lane\\
Whiteknights\\
Reading RG6 6AX\\
UK}
   \email{c.a.v.macedo@pgr.reading.ac.uk}
\urladdr{https://sites.google.com/view/andre-macedo}

\address{Rachel Newton\\
University of Reading\\
Department of Mathematics and Statistics\\ 
Pepper Lane\\
Whiteknights\\
Reading RG6 6AX\\
UK}
   \email{r.d.newton@reading.ac.uk}
\urladdr{https://racheldominica.wordpress.com/}


\title[Explicit methods for the HNP and applications to $A_n$ and $S_n$ extensions]{Explicit methods for the Hasse norm principle and applications to $A_n$ and $S_n$ extensions}

\begin{document}

\maketitle

\begin{abstract}
Let $K/k$ be an extension of number fields.
We describe theoretical results and computational methods for calculating the
obstruction to the Hasse norm principle for $K/k$ and the defect of weak approximation for the norm one torus $R^1_{K/k}\bbG_m$.
We apply our techniques to give explicit and computable formulae for the obstruction to the Hasse norm principle and the defect of weak approximation when the normal closure of $K/k$ has symmetric or alternating Galois group.
\end{abstract}

\tableofcontents
\section{Introduction}\label{sec:intro}

In this paper we study a local-global principle known as the \emph{Hasse norm principle} (HNP). Let $K/k$ be an extension of number fields with associated id\`{e}le groups $\bbA^*_K$ and $\bbA^*_k$. The norm map $N_{K/k} : K^* \to k^*$ extends to an id\`{e}lic norm map $N_{K/k} : \bbA^*_K \to \bbA^*_k$. The HNP is said to hold for $K/k$ if the so-called \emph{knot group} \[\mathfrak{K}(K/k)=(k^*\cap N_{K/k}\bbA_K^*)/N_{K/k}K^*\]
is trivial, i.e.~if being a norm everywhere locally is equivalent to being a global norm from $K/k$. For example, if $N/k$ is the normal closure of $K/k$, then the HNP holds for $K/k$ in the following cases:

\begin{enumerate}[label=(\Roman{*})]
\item \label{HNT}(The Hasse norm theorem:) $N=K$ and $\Gal(K/k)$ is cyclic  \cite{Hasse};
\item \label{prime}$[K:k]$ is prime \cite{Bartelsprime};
\item \label{dihedral} $[K:k]=n$ and $\Gal(N/k)\cong D_n$ is dihedral of order $2n$ \cite{Bartels};
\item \label{Sn} $[K:k]=n$ and $\Gal(N/k)\cong S_n$ \cite{VoKu} (see also \cite{Vosk2});
\item \label{An} $[K:k]=n\geq 5$ and $\Gal(N/k)\cong A_n$ \cite{macedo}.
\end{enumerate}
Biquadratic extensions provide the simplest setting in which the HNP can fail. For example, $3$ is everywhere locally a norm from $\mathbb{Q}(\sqrt{-3},\sqrt{13})/\mathbb{Q}$, but not a global norm \cite{Hasse}.

The HNP also has a geometric interpretation: the knot group $\mathfrak{K}(K/k)$ is identified with the Tate--Shafarevich group $\Sha(T)$ of the norm one torus $T=R^1_{K/k}\mathbb{G}_m$ defined by the following exact sequence of algebraic groups over $k$:
\[1\to R^1_{K/k}\mathbb{G}_m\to R_{K/k}\mathbb{G}_{m}\to \mathbb{G}_{m,k}\to 1\]
where $R_{K/k}\mathbb{G}_{m}$ denotes the Weil restriction of $\mathbb{G}_m$ from $K$ to $k$. The HNP holds for $K/k$ if and only if the Hasse principle holds for all principal homogeneous spaces for $R^1_{K/k}\mathbb{G}_m$. 

\emph{Weak approximation} is said to hold for a torus $T$ over $k$ if its $k$-points are dense in the product of its points over all completions of $k$; in other words if $A(T) = 0$, where
$A(T) =\prod_v T(k_v)/\overline{T(k)}$
and $\overline{T(k)}$ denotes the closure of $T(k)$ in $\prod_v T(k_v)$ with respect to the
product topology.
The following exact sequence, due to Voskresenski\u{\i} in \cite{Vosk}, ties together weak approximation for a torus $T$ and the Hasse principle for principal homogeneous spaces for $T$: 
\begin{equation}\label{eq:Vosk}
    0  \to A(T)  \to \operatorname{H}^1(k,\Pic \overline{X})^{\sim} \to \Sha (T) \to 0.
\end{equation}
Here, $X$ denotes a smooth compactification of $T$ and $\operatorname{H}^1(k,\Pic \overline{X})^{\sim}=\Hom(\operatorname{H}^1(k,\Pic \overline{X}),\bbQ/\bbZ)$. Note that the Hochschild--Serre spectral sequence gives an isomorphism $\Br X/\Br_0 X\cong \operatorname{H}^1(k,\operatorname{Pic}\overline{X})$, where $\Br_0 X=\im(\Br k\to \Br X)$. 
While results of Colliot-Th\'{e}lène and Sansuc (see e.g.~Theorem~\ref{thmcs}) enable computation of the invariant $\operatorname{H}^1(k,\Pic \overline{X})$, and a result of Tate (see Theorem~\ref{Tate}) does the same for the Tate--Shafarevich group, actually computing these groups in practice can be challenging. In this paper we give new methods for computing these invariants in the case of norm one tori associated to extensions of number fields.

\paragraph{\bf Set-up.} Except where stated otherwise, our assumptions throughout the rest of the paper will be as follows. Let $T=R^{1}_{K/k} \mathbb{G}_m$ and let $X$ denote a smooth compactification of $T$. Let $L/k$ be a Galois extension containing $K/k$ and set $G=\Gal(L/k)$ and $H=\Gal(L/K)$.

In addition to general techniques from the arithmetic of algebraic tori, our work makes use of a quotient of the knot group called the `\emph{first obstruction to the HNP for $K/k$ corresponding to the tower $L/K/k$}' defined by Drakokhrust and Platonov in \cite{DP} as
$$\mathfrak{F}(L/K/k)=(k^*\cap N_{K/k}\bbA_K^*)/(k^*\cap N_{L/k}\bbA_L^*)N_{K/k}K^*,$$

\noindent i.e.~as the cokernel of the natural map $\mathfrak{K}(L/k) \to\mathfrak{K}(K/k)$. As shown in \cite{DP}, the first obstruction to the HNP in a tower of number fields admits a purely group-theoretic description in terms of the relevant local and global Galois groups, see Theorem~\ref{thm1DP}.

Let $X_0$ be a smooth compactification of the torus $R^1_{L/k} \bbG_m$. The map $N_{L/K}:R^1_{L/k} \bbG_m\to R^1_{K/k} \bbG_m$ induces a canonical map $f_{L/K}:\operatorname{H}^1(k, \Pic\overline{X_0})^{\sim} \to \operatorname{H}^1(k, \Pic\overline{X})^{\sim}$, see \cite[\S1.2.2.]{BK00}. In order to study the birational invariant $\operatorname{H}^1(k, \Pic\overline{X})$, we introduce an object called the `\emph{unramified cover of the first obstruction to the HNP for $K/k$ corresponding to the tower $L/K/k$}' defined as
$$\mathfrak{F}_{nr}(L/K/k)=\Coker(f_{L/K}).$$

\noindent In similar fashion to the first obstruction to the HNP, its unramified cover $\mathfrak{F}_{nr}(L/K/k)$ also admits an explicit group-theoretic description:

\begin{theorem}\label{thm:unramified_description_intro}
There is a canonical isomorphism 
\[\mathfrak{F}_{nr}(L/K/k) = (H\cap [G,G])/\Phi^G(H),\] where $\Phi^G(H)$ denotes the focal subgroup of $H$ in $G$, see Definition~\ref{def:focal}.
\end{theorem}

As a corollary, one can compute the $p$-primary parts of the knot group, the invariant $\operatorname{H}^1(k, \Pic\overline{X})$, and the defect of weak approximation for all but finitely many primes $p$. In what follows, let $\mathfrak{F}(G,H)=(H\cap [G,G])/\Phi^G(H)$ and write $M_{(p)}$ for the $p$-primary part of an abelian group $M$.

\begin{corollary}\label{an_sn_p_part_intro}
If $p$ is a prime such that $\operatorname{H}^3(G,\Z)_{(p)} = 0$, then

\begin{enumerate}[label=(\roman{*})]
    \item $\mathfrak{K}(K/k)_{(p)} = \mathfrak{F}(L/K/k)_{(p)}$;
    \item $\operatorname{H}^1(k,\Pic \overline{X})^{\sim}_{(p)} = \mathfrak{F}(G,H)_{(p)}$; 
    \item $A(T)_{(p)}=\ker\left(\mathfrak{F}(G,H)_{(p)}\to \mathfrak{F}(L/K/k)_{(p)}\right)$, where the map $\mathfrak{F}(G,H)\to \mathfrak{F}(L/K/k)$ is a natural surjection, see Section~\ref{sec:1obs}.
\end{enumerate}  
\end{corollary}

We now restrict our focus to extensions with normal closure having Galois group isomorphic to $A_n$ or $S_n$. Our first main theorem enables a purely computational analysis of the HNP and weak approximation for extensions in this family.

\begin{theorem}\label{main0}

Suppose that $G$ is isomorphic to $A_n$ or $S_n$ for some $n \geq 4$ and $G\not\cong A_6, A_7$. 
Then
\[\mathfrak{K}(K/k)=\begin{cases}
\mathfrak{F}(L/K/k)\textrm{, if $|H|$ is even;}\\
\mathfrak{F}(L/K/k)\times \mathfrak{K}(L/k)  \textrm{, if $|H|$ is odd,}
\end{cases}\]
and 
\[\operatorname{H}^1(k,\Pic \overline{X})^\sim=\begin{cases}
\mathfrak{F}_{nr}(L/K/k), \textrm{if $|H|$ is even;}\\
  \mathfrak{F}_{nr}(L/K/k)\times\bbZ/2\textrm{, if $|H|$ is odd.}
\end{cases}\]
\end{theorem}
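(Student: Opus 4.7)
The strategy is to compute $\operatorname{H}^1(k,\Pic \overline{X})$ one $p$-primary component at a time, and then read off the knot group via the Voskresenski\u{\i} sequence~\eqref{eq:Vosk} together with the natural surjection $\mathfrak{K}(K/k) \twoheadrightarrow \mathfrak{F}(L/K/k)$ from the definitions. The decisive input is that $\operatorname{H}^3(G, \Z) \cong M(G) \cong \Z/2$ for $G \cong A_n$ or $S_n$ with $n \geq 4$ and (in the alternating case) $n \neq 6, 7$: the first isomorphism is universal coefficients for finite groups, and the values of the Schur multipliers are standard. Since $\operatorname{H}^3(G, \Z)_{(p)} = 0$ for every odd prime $p$, Theorem~\ref{an_sn_p_part_intro} gives $\operatorname{H}^1(k, \Pic \overline{X})_{(p)} \cong \mathfrak{F}(G, H)_{(p)}$ and $\mathfrak{K}(K/k)_{(p)} \cong \mathfrak{F}(L/K/k)_{(p)}$ for all odd $p$. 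It therefore remains to handle $p = 2$, which we do according to the parity of $|H|$.

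If $|H|$ is odd, then $2 \nmid |H|$ and Theorem~\ref{gp_containment_intro}(i) yields $\operatorname{H}^1(k, \Pic \overline{X})_{(2)} \cong \operatorname{H}^3(G, \Z)_{(2)} \cong \Z/2$. On the other hand $\mathfrak{F}(G, H)$ is a subquotient of $H$, so has odd order, forcing $\mathfrak{F}(G, H)_{(2)} = 0$; combining with the odd-prime case gives $\operatorname{H}^1(k, \Pic \overline{X}) \cong \mathfrak{F}(G, H) \times \Z/2$. For the knot group, we use the tautological exact sequence $\mathfrak{K}(L/k) \to \mathfrak{K}(K/k) \to \mathfrak{F}(L/K/k) \to 0$ coming from the definitions. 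Here $|\mathfrak{F}(L/K/k)|$ is odd (as a quotient of $\mathfrak{F}(G, H)$) while $\mathfrak{K}(L/k)$ has order dividing $|\operatorname{H}^3(G, \Z)| = 2$, so this coprime-order extension splits and produces $\mathfrak{K}(K/k) \cong \mathfrak{F}(L/K/k) \times \mathfrak{K}(L/k)$.

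If $|H|$ is even, the target is $\operatorname{H}^1(k, \Pic \overline{X})_{(2)} \cong \mathfrak{F}(G, H)_{(2)}$. Using Theorem~\ref{mid_gp_general_intro} we may replace $H$ by a Sylow $2$-subgroup of itself without altering the $2$-primary invariant, so we may assume $H$ is a $2$-group inside $G \cong A_n$ or $S_n$. The Drakokhrust-Platonov formalism supplies a natural surjection $\mathfrak{F}(G, H)_{(2)} \twoheadrightarrow \operatorname{H}^1(k, \Pic \overline{X})_{(2)}$, and we argue that it is an isomorphism by an order count. The input is the five-term sequence attached to $0 \to \Z \to \Z[G/H] \to \widehat{T} \to 0$ (where Shapiro's lemma identifies $\operatorname{H}^i(G, \Z[G/H]) \cong \operatorname{H}^i(H, \Z)$), together with the focal-subgroup description of $\mathfrak{F}(G, H)$; these combine to pin down how the $\Z/2$ coming from $\operatorname{H}^3(G, \Z)_{(2)}$ is absorbed into the image of $\mathfrak{F}(G, H)_{(2)}$ whenever $|H|$ is even. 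Once $\operatorname{H}^1(k, \Pic \overline{X})_{(2)}$ is identified, \eqref{eq:Vosk} together with the surjection $\mathfrak{K}(K/k) \twoheadrightarrow \mathfrak{F}(L/K/k)$ forces $\mathfrak{K}(K/k)_{(2)} \cong \mathfrak{F}(L/K/k)_{(2)}$, completing the proof.

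The main obstacle is the order-matching in this last step: one must verify, using the explicit structure of Sylow $2$-subgroups of $A_n$ and $S_n$ (iterated wreath products of $\Z/2$) together with the parity criterion for the spin lifts of products of disjoint transpositions, that no extra $\Z/2$ survives from $\operatorname{H}^3(G, \Z)_{(2)}$ when $|H|$ is even. The cases $A_6$ and $A_7$ are excluded precisely because their Schur multipliers acquire $3$-torsion ($M(A_6) \cong M(A_7) \cong \Z/6$), which breaks the clean reduction to a single prime $p = 2$ and would require separate treatment.
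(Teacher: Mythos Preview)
Your treatment of the odd primes and of the case $|H|$ odd is essentially the paper's, modulo one slip: from the exact sequence $\mathfrak{K}(L/k)\to\mathfrak{K}(K/k)\to\mathfrak{F}(L/K/k)\to 0$ you only conclude that $\mathfrak{K}(K/k)_{(2)}$ is a \emph{quotient} of $\mathfrak{K}(L/k)$, not that it equals $\mathfrak{K}(L/k)$. You need injectivity of $\mathfrak{K}(L/k)\to\mathfrak{K}(K/k)$ on $2$-parts, which is exactly what Corollary~\ref{cor:knot_pprimetoH} (i.e.\ Proposition~\ref{GurakProp1}) supplies; the paper invokes this directly.

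The real gap is the $|H|$ even case. First, the natural surjection runs the other way: by Theorem~\ref{mid_gp_generalized} and Proposition~\ref{summary_objects} one has $\operatorname{H}^1(k,\Pic\overline{X})\cong\mathfrak{F}(\overline{G},\overline{H})\twoheadrightarrow\mathfrak{F}(G,H)$, not $\mathfrak{F}(G,H)\twoheadrightarrow\operatorname{H}^1$. So an ``order count'' would have to bound $|\operatorname{H}^1_{(2)}|$ from above by $|\mathfrak{F}(G,H)_{(2)}|$, and for that you must actually show that the generator $z$ of $\Ker\lambda$ lies in $\Phi^{\overline{G}}(\overline{H})$. That is the content of Lemma~\ref{aux_v4} in the paper: an explicit calculation in Schur's presentation (Proposition~\ref{pres}) produces, for any involution $h\in H$, a specific $x\in G$ with $\langle h,x\rangle\cong V_4$ and $[\lambda^{-1}(h),\lambda^{-1}(x)]=z$. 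Your allusion to ``the parity criterion for spin lifts'' is pointing at the right ingredient, but the computation is not optional and is the heart of the even case.

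Second, and more seriously, your deduction of $\mathfrak{K}(K/k)_{(2)}\cong\mathfrak{F}(L/K/k)_{(2)}$ from $\operatorname{H}^1_{(2)}\cong\mathfrak{F}(G,H)_{(2)}$ via Voskresenski\u{\i} does not go through. The sequence~\eqref{eq:Vosk} only gives $|\mathfrak{K}(K/k)_{(2)}|\leq|\operatorname{H}^1_{(2)}|=|\mathfrak{F}(G,H)_{(2)}|$, and together with $\mathfrak{K}(K/k)_{(2)}\twoheadrightarrow\mathfrak{F}(L/K/k)_{(2)}$ you get $|\mathfrak{F}(L/K/k)_{(2)}|\leq|\mathfrak{K}(K/k)_{(2)}|\leq|\mathfrak{F}(G,H)_{(2)}|$; but $\mathfrak{F}(L/K/k)$ can be strictly smaller than $\mathfrak{F}(G,H)$ (it depends on ramification), so nothing forces the left inequality to be an equality. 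The paper instead proves $\mathfrak{K}(K/k)=\mathfrak{F}(L/K/k)$ directly via Theorem~\ref{thm3DP}: the same $V_4=\langle h,x\rangle$ from Lemma~\ref{aux_v4} has $H\cap V_4\neq 1$, the HNP holds for the resulting (at most quadratic) subextension, and $\Cor^G_{V_4}$ is surjective onto $\hat{\operatorname{H}}^{-3}(G,\Z)\cong\Z/2$ precisely because $z\in[\lambda^{-1}(V_4),\lambda^{-1}(V_4)]$. Thus the explicit $V_4$ construction is used twice, and the two conclusions for $|H|$ even are obtained by independent arguments rather than one being deduced from the other.
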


Theorem~\ref{Tate}, due to Tate, shows that the knot group of the Galois extension $L/k$ is dual to $\Ker(\operatorname{H}^3(G,\bbZ)\to \prod_{v}\operatorname{H}^3(D_v,\bbZ))$, where $D_v$ denotes the decomposition group at a place $v$ of $k$. Note that this kernel only depends on the decomposition groups at the ramified places, since if $v$ is unramified then $D_v$ is cyclic and hence $\operatorname{H}^3(D_v,\bbZ)=0$. In the setting of Theorem~\ref{main0} we obtain an algorithm that takes as inputs $G$, $H$ and the decomposition groups at the ramified places of $L/k$ and gives as its outputs the knot group $\mathfrak{K}(K/k)$, the invariant $\operatorname{H}^1(k,\Pic \overline{X})$, and the defect of weak approximation $A(T)$ for $T=R^1_{K/k}\bbG_m$.

Using Theorem~\ref{main0} we also characterize the possible isomorphism classes of the group $\operatorname{H}^1(k,\Pic \overline{X})$:

\begin{theorem}
\label{thm:an_sn_options}
\begin{enumerate}[label=(\roman{*})]
\item\label{options_1} For $G\cong S_n$ the invariant $\operatorname{H}^1(k,\Pic \overline{X})$ is an elementary abelian $2$-group. Moreover, every possibility for $\operatorname{H}^1(k,\Pic \overline{X})$ is realised: given an elementary abelian $2$-group $A$, there exists $n\in\bbN$ and an extension of number fields $K/k$ whose normal closure has Galois group $S_n$ such that $\operatorname{H}^1(k,\Pic \overline{X})\cong A$, where $X$ is a smooth compactification of $R^1_{K/k} \bbG_m$.
\item For $G\cong A_n$ the invariant $\operatorname{H}^1(k,\Pic \overline{X})$ is either isomorphic to $C_3$, $C_6$ or an elementary abelian $2$-group. Again, every possibility for $\operatorname{H}^1(k,\Pic \overline{X})$ is realised.
\end{enumerate}
\end{theorem}

\begin{remark}
The statement of Theorem~\ref{thm:an_sn_options} also holds if one replaces $\operatorname{H}^1(k,\Pic \overline{X})$ by $\mathfrak{K}(K/k)$ or $A(T)$, see Proposition~\ref{prop:an_sn_options_knotgp}.
\end{remark}

Theorems~\ref{main0} and \ref{thm:an_sn_options} can be combined to obtain more precise information, as demonstrated in Corollary~\ref{cor:thorough} and Example~\ref{eg} below.

\begin{corollary}\label{cor:thorough}
Retain the assumptions of Theorem~\ref{main0} and, for $p$ prime, let $H_p$ denote a Sylow $p$-subgroup of $H$. Then $\operatorname{H}^1(k,\Pic \overline{X})_{(p)}=0$ for all primes $p>3$, $\operatorname{H}^1(k,\Pic \overline{X})_{(3)}=0$ if $G\cong S_n$,
\[\operatorname{H}^1(k,\Pic \overline{X})^\sim_{(2)}=\begin{cases}\mathfrak{F}(G,H)[2]\cong\mathfrak{F}(G,H_2) & \textrm{if $|H|$ is even;}\\
\Z/2 & \textrm{if $|H|$ is odd,}
\end{cases}
\]
and if $G\cong A_n$ then
\[\operatorname{H}^1(k,\Pic \overline{X})^\sim_{(3)}=\mathfrak{F}(G,H)[3]\cong\mathfrak{F}(G,H_3).\]
In particular, if $3\nmid |H|$ then $\operatorname{H}^1(k,\Pic \overline{X})$ is $2$-torsion.
\end{corollary}

\begin{example}\label{eg}
Suppose that $G \cong S_n$ and $|H|$ is odd. Then $\operatorname{H}^1(k,\Pic \overline{X}) = \Z/2$ and ${\mathfrak{K}(K/k) = \mathfrak{K}(L/k)}$. The same conclusion holds for $G\cong A_n$ under the stronger assumption that $|H|$ is coprime to $6$.
\end{example}

As a further application of Theorem~\ref{main0}, one can obtain conditions on the decomposition groups determining whether the HNP and weak approximation hold in $A_n$ and $S_n$ extensions. In Propositions~\ref{main} and \ref{weakap}, we exhibit such a characterization for $n=4$ or $5$, when these local conditions are particularly simple.

\begin{proposition}\label{main}

Suppose that $G$ is isomorphic to $A_4, A_5, S_4$ or $S_5$. Then $\mathfrak{K}(K/k)\hookrightarrow C_2$ and
\begin{enumerate}[label=(\roman{*})]
\item\label{main odd} if $|H|$ is odd, then $\mathfrak{K}(K/k)=1\iff \exists\ v$ such that 
$V_4\hookrightarrow D_v$;
\item\label{main ii} if $\exists$ $C\leq  H$ generated by a double transposition with $[H:C]$ odd, then $\mathfrak{K}(K/k)=1\iff \exists\ v$ such that $D_v$ contains a copy of $V_4$ generated by two double transpositions;
\item \label{main HNP} in all other cases, $\mathfrak{K}(K/k)=1$.
\end{enumerate}
\end{proposition}

\begin{proposition}
\label{weakap}
Retain the assumptions of Proposition~\ref{main}.
Then \[\operatorname{H}^1(k,\Pic \overline{X})=\begin{cases}
\bbZ/2 & \textrm{in cases (i) and (ii) of Proposition~\ref{main}};\\
0 & \textrm{otherwise}.
\end{cases}\]
Therefore, in cases (i) and (ii) of Proposition~\ref{main}, weak approximation holds for $R^{1}_{K/k} \mathbb{G}_m$ if and only if the HNP fails for $K/k$. In all other cases, weak approximation holds for $R^{1}_{K/k} \mathbb{G}_m$.
\end{proposition}

For the sake of completeness, we also provide criteria for the validity of the HNP when $G \cong A_6$ or $A_7$ (the two groups not addressed by Theorem \ref{main0}), see Propositions~\ref{thm:A6A7} and \ref{thm:waconditionsA6}. The proof uses the first obstruction to the HNP, along with various tricks involving moving between subextensions as detailed in Section~\ref{sec:upanddown}.

Our motivation for providing explicit local conditions for the failure of the HNP is to enable a statistical analysis of the HNP and weak approximation for norm one tori in families of extensions of number fields. 
Such an analysis was carried out for extensions of a number field $k$ with fixed abelian Galois group by the second author together with Frei and Loughran in \cite{FLN} (ordering by discriminant) and \cite{FLN2} (ordering by conductor). One consequence of their results is that the HNP fails for $0\%$ of biquadratic extensions of $k$.
In the case $k=\mathbb{Q}$, this was refined to an asymptotic formula for the number of biquadratics failing the HNP (ordered by discriminant) by Rome in \cite{Rome}. 

Having dealt with the $V_4$ case and noting that the HNP holds for all $C_4$, $D_4$ and $S_4$ quartics (see \ref{HNT}, \ref{dihedral} and \ref{Sn}),
if one wants to fully understand the frequency of failure of the HNP for quartics with fixed Galois group, 
there is one remaining family to tackle: namely $A_4$ quartics. Counting $A_4$ quartics may be beyond current capabilities but the following corollary of Proposition~\ref{main} gives hope that one may be able to exploit results about biquadratic extensions to bound the number of $A_4$ quartics for which the HNP fails.

\begin{corollary}\label{cor:A4}
Let $K/k$ be a quartic extension of number fields with normal closure $L/k$ such that $G=\Gal(L/k)$ is isomorphic to $A_4$. 
Let $F$ be the fixed field of the copy of $V_4$ in $G$. Then 
\[\mathfrak{K}(K/k)\cong\mathfrak{K}(L/k)\cong \mathfrak{K}(L/F).\]
In particular, the HNP holds for $K/k$ if and only if it holds for the biquadratic extension $L/F$. Likewise, weak approximation holds for $K/k$ if and only if it holds for $L/F$.

\end{corollary}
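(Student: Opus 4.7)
Since $G\cong A_4$ and $[K:k]=4$, the group $H=\Gal(L/K)$ is a Sylow $3$-subgroup of $A_4$, so $H\cong C_3$ has odd order. Theorem~\ref{main0} applies ($A_4$ is neither $A_6$ nor $A_7$) and yields
\[
\mathfrak{K}(K/k)\cong \mathfrak{F}(L/K/k)\times \mathfrak{K}(L/k).
\]
To kill the first factor I would note that $\mathfrak{F}(L/K/k)$ is a quotient of $\mathfrak{F}(G,H)=(H\cap[G,G])/\Phi^G(H)$, and $H\cap[G,G]=C_3\cap V_4=1$ by coprimality of orders. This gives the first isomorphism $\mathfrak{K}(K/k)\cong\mathfrak{K}(L/k)$.

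For $\mathfrak{K}(L/k)\cong \mathfrak{K}(L/F)$ I would apply Tate's theorem (Theorem~\ref{Tate}) to both Galois extensions $L/k$ and $L/F$ and match things up on the cohomological side. The key input is that the restriction $H^3(A_4,\bbZ)\to H^3(V_4,\bbZ)$ is an isomorphism onto $\bbZ/2$: the $3$-part of $H^3(A_4,\bbZ)$ vanishes since the Sylow $3$-subgroup is cyclic, and the $2$-part equals the $C_3=A_4/V_4$-invariants in $H^3(V_4,\bbZ)\cong\bbZ/2$ (by the standard Sylow transfer, noting that $V_4\triangleleft A_4$), which is all of $\bbZ/2$ since any $C_3$-action on $\bbZ/2$ is trivial. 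Among subgroups of $A_4$, only $V_4$ and $A_4$ itself have nontrivial $H^3$; among subgroups of $V_4$, only $V_4$ itself does. The decomposition-group identity $D_w(L/F)=D_v(L/k)\cap V_4$ for $w\mid v$ (valid because $V_4\triangleleft A_4$) then gives $D_w=V_4$ iff $V_4\subseteq D_v$, so the nontrivial class in $H^3(A_4,\bbZ)$ lies in $\Ker\bigl(\prod_v H^3(D_v,\bbZ)\bigr)$ iff the nontrivial class in $H^3(V_4,\bbZ)$ lies in the analogous kernel for $L/F$. Dualising yields $\mathfrak{K}(L/k)\cong \mathfrak{K}(L/F)$.

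The weak approximation statement follows formally from Voskresenski\u{\i}'s exact sequence \eqref{eq:Vosk}. By Theorem~\ref{main0} (odd $|H|$ case), $\operatorname{H}^1(k,\Pic\overline{X})\cong\mathfrak{F}(A_4,C_3)\times\bbZ/2\cong\bbZ/2$; by Theorem~\ref{gp_containment_intro}(i) applied with $H=1$ to $L/F$, $\operatorname{H}^1(F,\Pic\overline{Y})\cong H^3(V_4,\bbZ)\cong\bbZ/2$. Since the knot groups on the right of \eqref{eq:Vosk} are isomorphic and both middle terms have order $2$, counting orders forces $|A(T)|=|A(T')|\in\{1,2\}$, so $A(T)=0$ iff $A(T')=0$. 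The main technical point is the decomposition-group matching in the middle paragraph; once the group-theoretic bookkeeping is organised, everything else is formal.
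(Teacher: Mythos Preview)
Your argument is correct. The route differs from the paper's in two places worth noting.

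For $\mathfrak{K}(K/k)\cong\mathfrak{K}(L/k)$ the paper simply invokes Theorem~\ref{main}: since $|H|=3$ is odd, both $K/k$ and $L/k$ fall under case~\ref{main odd}, so each knot group is trivial precisely when some $D_v$ contains $V_4$. Your approach via Theorem~\ref{main0} and the observation $H\cap[G,G]=C_3\cap V_4=1$ is just as short and arguably more transparent, since it avoids the local-condition detour. For $\mathfrak{K}(L/k)\cong\mathfrak{K}(L/F)$ you and the paper are doing the same thing (Tate's theorem), though you spell out the details; note that the injectivity of $\Res^{A_4}_{V_4}$ on $\operatorname{H}^3$ is exactly Lemma~\ref{Res isom}, so you could cite that instead of redoing the Sylow-invariants computation.

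The genuine divergence is in the weak approximation step. The paper uses Lemma~\ref{lem:wa_composing} directly: $[K:k]=4$ and $[F:k]=3$ are coprime and $KF=L$, so weak approximation for $R^1_{K/k}\bbG_m$ holds iff it holds for $R^1_{KF/F}\bbG_m=R^1_{L/F}\bbG_m$. This is a one-line argument and does not require first computing $\operatorname{H}^1(k,\Pic\overline{X})$ or re-invoking the knot-group isomorphism. Your order-counting via Voskresenski\u{\i}'s sequence works, but it depends on having already established everything else; the paper's lemma gives the weak approximation equivalence independently.
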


The first statistical study of the HNP in a family of extensions with fixed non-abelian Galois group is carried out by the first author in \cite{MacedoD4}, where he shows that the HNP fails for $0\%$ of $D_4$ octics ordered by an Artin conductor. The present paper provides the algebraic input required to study the statistics of the HNP and weak approximation in several more families of non-abelian, and even non-Galois, number fields -- such as $S_4$ octics, for example.
This future work will capitalize on recent advances in counting within families of number fields, see e.g.~\cite{ASVW17, Bhargava,Bhargava_quintics,BST13,BV15,EPW17,FW18,HSV18,PTBW17,Woo17},
and contribute to the ongoing rapid progress in the area of rational points and failures of local-global principles in families of varieties. See \cite{Browning} for a survey of recent developments in this area. 
  
Although counting degree $n > 4$ extensions of number fields with bounded discriminant may be out of reach at present, there are very precise conjectures for the number of such extensions. Namely, the weak Malle conjecture on the distribution of number fields (see \cite{Malle1}) predicts that the number $N(k,G,X)$ of degree $n$ extensions $K$ of a number field $k$ with Galois group $G$ and $|N_{k/\Q}(\operatorname{Disc}_{K/k})|\leq X$ satisfies 
\begin{equation}\label{malle_estimates}
    X^{\frac{1}{\alpha(G)}} \ll N(k,G,X) \ll X^{\frac{1}{\alpha(G)}+\epsilon},
\end{equation}

\noindent where $\alpha(G)=\min\limits_{g \in G \setminus \{1\}}  \{\operatorname{ind}(g) \}$ and $\operatorname{ind}(g)$ equals $n$ minus the number of orbits of $g$ on $\{1,\dots,n\}$. Using a computational method developed by Hoshi and Yamasaki to determine ${\operatorname{H}^1(k, \Pic \overline{X})}$ (see Section \ref{sec:comp_met}), we obtain the following consequence of this conjecture:

\begin{theorem}\label{thm:malle_thm}
Fix a number field $k$ and an integer $n \leq 15$ with $n \neq 8,12$. Suppose that Conjecture~\eqref{malle_estimates} holds for every transitive subgroup $G \leq S_n$. Then

\begin{enumerate}[label=(\roman{*})]
    \item the HNP holds for 100\% of degree $n$ extensions over $k$, when ordered by discriminant;
    \item weak approximation holds for 100\% of norm one tori of degree $n$ extensions over $k$, when ordered by discriminant of the associated extension.
\end{enumerate}  
\end{theorem}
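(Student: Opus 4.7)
The plan is to leverage Voskresenski\u{\i}'s exact sequence \eqref{eq:Vosk}: once $\operatorname{H}^1(k,\Pic \overline{X})=0$, both $\mathfrak{K}(K/k)=\Sha(T)=0$ and $A(T)=0$ are forced, so it is enough to show that this single group vanishes for $100\%$ of degree $n$ extensions of $k$ ordered by discriminant; this handles \emph{(i)} and \emph{(ii)} simultaneously.

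The first step is to stratify degree $n$ extensions $K/k$ by the isomorphism class of $G=\Gal(N/k)$ together with its natural embedding $G\hookrightarrow S_n$ coming from the action on the $n$ embeddings $K\hookrightarrow \overline{k}$. Since the $\Gal(\overline{k}/k)$-action on $\Pic\overline{X}$ factors through $G$ and $\Pic\overline{X}$ is a torsion-free finitely generated $\bbZ[G]$-module, we have $\operatorname{H}^1(k,\Pic \overline{X})\cong\operatorname{H}^1(G,\Pic \overline{X})$; this depends only on the pair $(G,H)$ with $H\leq G$ a point stabilizer and is computable algorithmically by the Hoshi--Yamasaki method recalled in Section~\ref{sec:comp_met}.

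The bulk of the argument is then a finite computation over the transitive groups library of GAP. For each $n\leq 15$ with $n\neq 8,12$ and each transitive subgroup $G\leq S_n$, we run the Hoshi--Yamasaki algorithm to determine $\operatorname{H}^1(G,\Pic \overline{X})$, and verify the key claim that whenever this invariant is nonzero one has $\alpha(G)\geq 2$. Granting the claim, the upper bound in \eqref{malle_estimates} gives $N(k,G,X)\ll X^{1/\alpha(G)+\epsilon}\ll X^{1/2+\epsilon}$ for each such ``exceptional'' $G$, and summing over the finitely many exceptional conjugacy classes yields $O(X^{1/2+\epsilon})$ exceptional extensions. On the other hand, the lower bound in \eqref{malle_estimates} applied to $G=S_n$ (with $\alpha(S_n)=1$, realised by any transposition) gives $N(k,n,X)\geq N(k,S_n,X)\gg X$, so the exceptional extensions have density zero.

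The hard part will be the finite verification itself, and it is precisely this that forces the exclusion of $n=8$ and $n=12$: those degrees presumably contain a transitive subgroup $G$ with $\alpha(G)=1$ (necessarily imprimitive and containing a transposition) for which $\operatorname{H}^1(G,\Pic \overline{X})\neq 0$, which would defeat the crude density bound used here. Extending to these excluded degrees would require replacing the upper bound by $\operatorname{H}^1(G,\Pic\overline{X})$ with a finer analysis of the local decomposition-group conditions that actually realise $\Sha(T)\neq 0$ or $A(T)\neq 0$, and is beyond the scope of this plan.
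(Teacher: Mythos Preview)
Your proposal is correct and follows essentially the same approach as the paper: stratify by the transitive pair $(G,H)$, use the Hoshi--Yamasaki computation to check that $\operatorname{H}^1(G,F_{G/H})\neq 0$ forces $\alpha(G)>1$, and then combine the upper bound from Malle's conjecture for these exceptional $G$ with the lower bound $N(k,n,X)\gg X$ coming from a transitive subgroup with $\alpha=1$. The paper's proof is essentially identical, differing only cosmetically (it phrases the lower bound as ``Malle's conjecture implies $N(k,n,X)\geq c(k,n)X$'' rather than singling out $S_n$, and writes $\alpha(G)>1$ rather than $\alpha(G)\geq 2$).
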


In fact, the assertions of Theorem~\ref{thm:malle_thm} remain true if one only assumes Conjecture~\eqref{malle_estimates} for a few transitive subgroups of $S_n$, see Remark \ref{malle_proof_rmk}\ref{malle_proof_rmk_item3}. An analysis of the invariant ${\operatorname{H}^1(k, \Pic \overline{X})}$ for extensions $K/k$ of degree $n \leq 15$ has also recently been carried out independently by Hoshi, Kanai and Yamasaki in \cite{HoshiYamasaki1} and \cite{HoshiYamasaki2}. In these works, the computation of $\operatorname{H}^1(k, \Pic \overline{X})$ for such extensions (which in the present paper happened behind the scenes of the proof of Theorem~\ref{thm:malle_thm}) is made explicit and, additionally, necessary and sufficient conditions for the vanishing of $\mathfrak{K}(K/k)$ are given.

\label{page_gh_extension}

In order to obtain asymptotic formulae for the number of extensions satisfying certain conditions, it is often necessary to first show the existence of at least one such extension, see \cite[Theorem~1.7]{FLN}, for example. 
Our next result addresses this issue. 
Let $G$ be a finite group and $H$ a subgroup of $G$. We define a $(G,H)$-extension of a number field $k$ to be an extension $K/k$ for which there exists a Galois extension $L/k$ containing $K/k$ such that $\Gal(L/k) \cong G$ and $\Gal(L/K) \cong H$. We write $F_{G/H}$ for a flasque module in a flasque resolution of the Chevalley module $J_{G/H}$, see Section~\ref{sec:arithmetic_tori}.

\begin{theorem}\label{examples_general_intro}
Let $G$ be a finite group and $H$ a subgroup of $G$. Then
\begin{enumerate}[label=(\roman{*})]
    \item there exist a number field $k$ and a $(G,H)$-extension of $k$ satisfying the HNP and, furthermore, if $\operatorname{H}^1(G,F_{G/H})\neq 0$ then weak approximation fails for the norm one torus associated to this extension;
    \item\label{existwa} there exist a number field $k$ and a $(G,H)$-extension of $k$ whose norm one torus satisfies weak approximation and, furthermore, if $\operatorname{H}^1(G,F_{G/H})\neq 0$ then this extension fails the HNP.
\end{enumerate}
\end{theorem}

 The condition $\operatorname{H}^1(G,F_{G/H})\neq 0$ in Theorem~\ref{examples_general_intro} is necessary because for a $(G,H)$-extension $K/k$ with $X$ a smooth compactification of $R^1_{K/k}\bbG_m$, one has $\operatorname{H}^1(k,\operatorname{Pic}\overline{X})= \operatorname{H}^1(G,F_{G/H})$. This is due to Colliot-Thélène and Sansuc (see Theorem~\ref{thmcs}).
 
 It is interesting to compare Theorem~\ref{examples_general_intro} with \cite[Theorem 1.3]{FLN}, where the authors prove existence of Galois extensions failing the HNP with prescribed solvable Galois group $G$ and base field $k$. Here we avoid the restriction on $G$ but lose control of the base field which, in both cases of Theorem \ref{examples_general_intro}, may be of quite large degree over $\Q$. In Section~\ref{sec:eg}, we give explicit examples of extensions of number fields illustrating all cases of Proposition~\ref{main}. The examples of field extensions for which the HNP holds all have base field $\bbQ$, and the examples for which the HNP fails have base fields that are at most quadratic extensions of $\bbQ$.

\subsection{Structure of the paper}
Section~\ref{sec:arithmetic_tori} contains some relevant background material concerning the arithmetic of algebraic tori.
In Section~\ref{sec:upanddown} we gather results that allow one to transfer information regarding the HNP from a field extension to its subextensions and vice versa. We also give analogues of these results for weak approximation on the associated norm one tori. In Section~\ref{sec:1obs} we prove Theorem~\ref{thm:unramified_description_intro} and Corollary~\ref{an_sn_p_part_intro}. In Section~\ref{sec:gen_rep} we introduce generalized representation groups and outline work of Drakokhrust which uses these groups to describe the invariant $\operatorname{H}^1(k,\operatorname{Pic}\overline{X})$ occurring in Voskresenski\u{\i}'s exact sequence~\eqref{eq:Vosk}.
In Section~\ref{sect_applications} we apply our results to extensions whose normal closure has Galois group $A_n$ or $S_n$, proving Theorems~\ref{main0} and \ref{thm:an_sn_options}, Corollary~\ref{cor:thorough}, Propositions~\ref{main} and \ref{weakap}, Corollary~\ref{cor:A4} and Theorem~\ref{thm:malle_thm}. 
In Section~\ref{sec:eg} we prove Theorem~\ref{examples_general_intro} and give examples of successes and failures of the HNP in all cases covered by Proposition~\ref{main}.

\subsection{Notation}
Given a number field $k$ and a Galois extension $L/k$, we use the following notation:

\begin{longtable}{p{1cm} p{13cm}}
$\overline{k}$ & an algebraic closure of $k$\\
$\mathbb{A}_{k}^*$ & the id\`{e}le group of $k$\\
$\mathcal{O}_k$ &the ring of integers of $k$\\
$\Omega_k$ & the set of all places of $k$\\
$L_v$ & the completion of $L$ at some choice of place above $v \in \Omega_k$\\
$D_v$ & the Galois group of $L_v/k_v$
\end{longtable}

Given a field $K$, a variety $X$ over $K$ and an algebraic $K$-torus $T$, we use the following notation:

\begin{longtable}{p{1.5cm} p{13cm}}
$\mathbb{G}_{m,K}$ & the multiplicative group $\operatorname{Spec}(K[t,t^{-1}])$ of $K$ (when $K$ is clear from the context we omit it from the subscript)\\
$X_L$ & the base change $X \times_{K} L$ of $X$ to a field extension $L/K$ \\
$\overline{X}$ & the base change of $X$ to an algebraic closure of $K$ \\
$\operatorname{Pic}X$ & the Picard group of $X$\\
$\widehat{T}$ & the character group $\operatorname{Hom}(\overline{T},\mathbb{G}_{m,\overline{K}})$ of $T$\\
$R_{K/k} T$ & the Weil restriction of $T$ to a subfield $k$ of $K$\\
$R^{1}_{K/k} \mathbb{G}_m$ & the kernel of the norm map $ N_{K/k} : R_{K/k}\mathbb{G}_{m} \to \mathbb{G}_{m,k}$
\end{longtable}

For an algebraic torus $T$ defined over a number field $k$, we denote its Tate--Shafarevich group and defect of weak approximation by
\begin{equation*} \label{def:Sha}
	\Sha(T) := \Ker \left( \mathrm{H}^1(k,T)  \to \prod_{v \in \Omega_k} \mathrm{H}^1(k_v,T) \right) \text{ and } A(T) := \left(\prod\limits_{v\in \Omega_k}T(k_v)\right)/\overline{T(k)},
\end{equation*}
 respectively.

 Given a finite group $G$, a $G$-module $A$, an integer $q$ and a prime number $p$, we use the following notation:

\begin{longtable}{p{1.5cm} p{13cm}}
$|G|$ & the order of $G$\\
$\exp(G)$ & the exponent of $G$\\
$[G,G]$ & the derived subgroup of $G$\\
$G^{\sim}$ & the $\Q / \Z$-dual $\Hom(G,\Q/\Z)$ of $G$\\

$G_p$ & a Sylow $p$-subgroup of $G$\\
$\hat{\operatorname{H}}^q(G,A)$ & the Tate cohomology group\\
$\Sha_{\omega}^q(G,A)$ & the kernel of the restriction map $\hat{\operatorname{H}}^q(G,A) \xrightarrow[]{\operatorname{Res}} \prod_{g \in G} \hat{\operatorname{H}}^q(\langle g \rangle,A)$.\\
\end{longtable}

For $x,y \in G$ we adopt the convention $[x,y]=x^{-1}y^{-1}xy$ and $x^y=y^{-1}xy$. If $G$ is abelian and $d \in \Z_{> 0}$, we use the following notation:

\begin{longtable}{p{1.5cm} p{13cm}}
$G[d]$ & the $d$-torsion of $G$\\
$G_{(d)}$ & the $d$-primary part of $G$.\\

\end{longtable}

We often use `$=$' to indicate a canonical isomorphism between two objects. 

\subsection{Acknowledgements}
We are grateful to Manjul Bhargava for conversations that motivated our work on this topic, to Jean-Louis Colliot-Th\'{e}l\`{e}ne for useful discussions which led to a cleaner proof of Corollary~\ref{cor:mid_gp_general_intro} and to the anonymous referees for valuable suggestions and for pointing out the geometric interpretation of the first obstruction to the HNP (Theorem~\ref{thm:unramified_description_intro}) which improved several results and proofs in the paper. We thank Levent Alpoge, Henri Cohen, Valentina Grazian, Samir Siksek, Anitha Thillaisundaram and Rishi Vyas for helpful conversations. 
André Macedo is supported by the Portuguese Foundation of Science and Technology (FCT) via the doctoral scholarship SFRH/BD/117955/2016. Rachel Newton is supported by EPSRC grant EP/S004696/1 and UKRI Future Leaders Fellowship MR/T041609/1.

\section{Preliminaries on the arithmetic of algebraic tori}\label{sec:arithmetic_tori}
Let $T$ be a torus over a number field $k$. As mentioned above, Voskresenski\u{\i}'s exact sequence ties together the Tate--Shafarevich group $\Sha(T)$ and the defect of weak approximation $A(T)$:

\begin{theorem}[Voskresenski\u{\i}]\label{thmvosk}
Let $T$ be a torus defined over a number field $k$ and let $X/k$ be a smooth compactification of $T$. Then there exists an exact sequence
\begin{equation}\label{eq:Voskexact}
    0 \to A(T) \to \operatorname{H}^1(k,\operatorname{Pic}\overline{X})^{\sim} \to \Sha(T) \to 0. 
\end{equation} 

\end{theorem}

\begin{proof}
See \cite[Theorem 6]{Vosk}.
\end{proof}

Voskresenski\u{\i} proved Theorem~\ref{thmvosk} by considering the following exact sequence of Galois modules:
\begin{equation}\label{eq:Voskflasque}
    0\to \widehat{T}\to\operatorname{Div}_{\overline{X}-\overline{T}}\overline{X} \to \operatorname{Pic}\overline{X}\to 0,
\end{equation}
where $\widehat{T}$ denotes the group of characters of $T$. The key point is that \eqref{eq:Voskflasque} is a \emph{flasque resolution} of the Galois module $\widehat{T}$. We explain this concept below (see \cite{coll1} and \cite{coll2} for more details).

Let $G$ be a finite group and let $A$ be a $G$-module. We say that $A$ is a \textit{permutation} module if it has a $\Z$-basis permuted by $G$. We say that $A$ is \textit{flasque} if $\hat{\operatorname{H}}^{-1}(G',A)=0$ for all subgroups $G'$ of $G$. A \textit{flasque resolution} of $A$ is an exact sequence of $G$-modules
$$ 0 \to A \to P \to F \to 0,$$

\noindent where $P$ is a permutation module and $F$ is flasque. We say two $G$-modules $A_1$ and $A_2$ are similar if $A_1 \oplus P_1 \cong A_2 \oplus P_2 $ for permutation modules $P_1,P_2$ and denote the similarity class of $A$ by $[A]$.

Recall that if $T$ is split by a Galois subextension $L/k$ of $\overline{k}/k$, then $\Gal(\overline{k}/L)$ acts trivially on the character group $\widehat{T}=\operatorname{Hom}(\overline{T},\mathbb{G}_{m,\overline{k}})$ and thus $\widehat{T}$ is a $\Gal(L/k)$-module. Implicit in much of our work is the fact that the norm one torus $R^{1}_{K/k} \mathbb{G}_m$ is split by any Galois extension of $k$ containing $K$. The followng result shows that the group $\operatorname{H}^1(k,\operatorname{Pic}\overline{X})$ has a simple cohomological description and can be computed using \emph{any} flasque resolution of $\widehat{T}$.

\begin{theorem}
[Colliot-Thélène and Sansuc]\label{thmcs}
Let $T$ be a torus defined over a number field $k$ and split by a finite Galois extension $L/k$ with $G=\operatorname{Gal}(L/k)$. Let
$$ 0 \to \widehat{T} \to P \to F \to 0
$$

\noindent be a flasque resolution of $\widehat{T}$ and let $X/k$ be a smooth compactification of $T$. Then the similarity class $[F]$ and the group $\operatorname{H}^1(G,F)$ are uniquely determined and
\begin{equation}\label{cs_flasque}
    \operatorname{H}^1(k,\operatorname{Pic}\overline{X}) = \operatorname{H}^1(G,\operatorname{Pic}X_L) = \operatorname{H}^1(G,F).
\end{equation}

\noindent Additionally, 
\begin{equation}\label{cs_sha}
    \operatorname{H}^1(G,F) = \Sha_{\omega}^2(G,\widehat{T}):=\Ker\Bigl({\operatorname{H}}^2(G,\widehat{T}) \xrightarrow[]{\operatorname{Res}} \prod_{g \in G} {\operatorname{H}}^2(\langle g \rangle,\widehat{T})\Bigr).
\end{equation} 

\noindent In the special case where $T=R^1_{L/k} \bbG_m$, we have
\begin{equation}\label{eq:tate_mid}
     \Sha_{\omega}^2(G,\widehat{T})=\operatorname{H}^2(G,\widehat{T})=\operatorname{H}^3(G,\Z).
\end{equation}
\end{theorem}

\begin{proof}
See \cite[Lemme 5 and Proposition 6]{coll1} for the proof of \eqref{cs_flasque}. The isomorphism $\operatorname{H}^1(G,F) = \Sha_{\omega}^2(G,\widehat{T})$ is proved in \cite[Proposition 9.5(ii)]{coll2}. The final assertion for $R^1_{L/k} \bbG_m$ follows from its defining sequence.
\end{proof}

The Tate--Shafarevich group $\Sha(T)$ also has a description in terms of the cohomology of $\widehat{T}$: 

\begin{theorem}[Tate]
\label{Tate}
Let $T$ be a torus defined over a number field $k$ and split by a finite Galois extension $L/k$ with $G=\operatorname{Gal}(L/k)$. Then Poitou--Tate duality gives a canonical isomorphism
\begin{equation}\label{eq:poitou_tate}
    \Sha(T)^{\sim} = \Sha^2(G,\widehat{T}),
\end{equation}
\noindent where $\Sha^2(G,\widehat{T})=\ker\Bigl(\operatorname{H}^2(G,\widehat{T}) \xrightarrow[]{\operatorname{Res}} \prod_{v \in \Omega_k} \operatorname{H}^2(D_v,\widehat{T})\Bigr)$. In the special case where $T=R^1_{L/k} \mathbb{G}_m$, 
\begin{equation}\label{eq:tate_knot}
    \Sha(T)^{\sim} = \Ker\left(\mathrm{H}^3(G,\bbZ)\xrightarrow{\Res}  \prod_{v \in \Omega_k}\mathrm{H}^3(D_v,\bbZ)\right),
\end{equation}

\noindent where $D_v=\Gal(L_v/k_v)$ is the decomposition group at $v$.
\end{theorem}

\begin{proof}
This is the case $i=1$ of \cite[Theorem 6.10]{Platonov}.
For the case $T=R^1_{L/k} \mathbb{G}_m$, see \cite[p.~198]{C-F}. 
\end{proof}

\begin{proposition}\label{prop:dualVosk}Let $T$ be a torus defined over a number field $k$ and split by a finite Galois extension $L/k$ with $G=\operatorname{Gal}(L/k)$.
Then taking duals in Voskresenski\u{\i}'s exact sequence \eqref{eq:Voskexact} yields the exact sequence
\begin{equation}\label{eq:Voskexactdual}
    0\to \Sha^2(G,\widehat{T})\to \Sha^2_\omega(G,\widehat{T})\to A(T)^\sim \to 0,
\end{equation}
where the map $\Sha^2(G,\widehat{T})\to\Sha^2_\omega(G,\widehat{T})$ is the natural inclusion arising from the Chebotarev density theorem. 
\end{proposition}

\begin{proof}
This follows from the proof of \cite[Theorem 6]{Vosk} and isomorphisms \eqref{cs_sha} and \eqref{eq:poitou_tate}.
\end{proof}

Let us return to the case where $T$ is the norm one torus $R^{1}_{K/k} \mathbb{G}_m$ of an extension $K/k$ of number fields. Taking character modules in the defining sequence for $T$ shows that $\widehat{T}$ is isomorphic to the $G$-module $J_{G/H}$, defined as follows:

\begin{definition}[Chevalley module]\label{def:chevalley}
Let $G$ be a finite group and $H$ a subgroup of $G$. The map $\eta : \Z \to \Z[G/H] $ defined by $\eta : 1 \mapsto N_{G/H}=\sum\limits_{gH\in G/H}gH$ produces the exact sequence of $G$-modules
$$ 0 \to \Z \xrightarrow[]{\eta} \Z[G/H] \to J_{G/H}  \to 0 ,$$
    
\noindent where $J_{G/H}=\operatorname{coker}\eta$ is called the \emph{Chevalley module of $G/H$}. 
\end{definition}

 Furthermore, for $T=R^{1}_{K/k} \mathbb{G}_m$ we have
$$\Sha(T) = \mathfrak{K}(K/k) $$
\noindent (see \cite[p.~307]{Platonov}). Hence, Theorem \ref{thmvosk} gives a necessary and sufficient condition for the simultaneous validity of the HNP for $K/k$ and weak approximation for $T$, namely the vanishing of $\operatorname{H}^1(k,\operatorname{Pic}\overline{X})$.

\begin{lemma}\label{lem:degkillsmiddlegp}
Let $K/k$ be a finite extension and let $X$ be a smooth compactification of $T=R^1_{K/k}\bbG_m$. Then $T\times_k K$ is stably rational. Consequently, $\operatorname{H}^1(K, \Pic\overline{X})=0$ and $\operatorname{H}^1(k, \Pic\overline{X})$ is killed by
$[K:k]$.
\end{lemma}

\begin{proof}
Write $T_K=T\times_k K$.
Applying base change to the exact sequence defining $T$ gives 
\begin{equation}
\label{basechange}
1\to T_K\to (R_{K/k}\bbG_m)\times_k K \xrightarrow{N_{K/k}} \bbG_{m,K}\to 1. 
\end{equation}
Let $L/k$ be a Galois extension containing $K$. Let $G=\Gal(L/k)$ and let $H=\Gal(L/K)$.
Taking character groups gives an exact sequence of $H$-modules
\begin{equation}
\label{chargps}
0\to \bbZ \xrightarrow{N_{G/H}}  \bbZ[G/H]\to \widehat{T_K}\to 0
\end{equation}
where $N_{G/H}:1\mapsto \sum\limits_{gH\in G/H}gH$. The map $\sum\limits_{gH\in G/H} a_{gH}\cdot gH\mapsto a_H$
defines a left splitting of \eqref{chargps}. Therefore, \eqref{basechange} splits and consequently
\[T_K\times \bbG_{m,K}\cong (R_{K/k}\bbG_m)\times_k K.\]
Hence, $T_K$ is $K$-stably rational, whereby
$\operatorname{H}^1(K, \Pic\overline{X})=\operatorname{H}^1(H, \Pic X_L)=0.$ Now recall that $\Cor^G_H\circ\Res^G_H$ is multiplication by $[G:H]=[K:k]$ and 
$\Res^G_H: \operatorname{H}^1(G, \Pic X_L)\to \operatorname{H}^1(H, \Pic X_L)=0.$
 This completes the proof that $[K:k]$ kills $\operatorname{H}^1(G, \Pic X_L)=\operatorname{H}^1(k, \Pic\overline{X})$.
\end{proof}
The corollary below is an immediate consequence of Theorem~\ref{thmvosk} and Lemma~\ref{lem:degkillsmiddlegp}.
\begin{corollary}\label{cor:nop}
Let $T=R^1_{K/k}\bbG_m$. Then $A(T)$ and $\mathfrak{K}(K/k)$ are killed by $[K:k]$.
\end{corollary}

\section{Using subextensions and superextensions}\label{sec:upanddown}
Let $k$ be a number field. In order to study the HNP and weak approximation in non-Galois extensions of $k$, it is often useful to be able to deduce information about the knot group of an extension $K/k$ from information about its subextensions or superextensions, the latter meaning extensions of $k$ containing $K$. In this section we collect some results that serve this purpose. 

\begin{lemma}\label{lem:Voskfunct}
Let $\phi:T_1\to T_2$ be a morphism of algebraic tori over $k$, and let $X_1$ and $X_2$ be smooth compactifications of $T_1$ and $T_2$, respectively. Then we obtain a commutative diagram with exact rows as follows, where the vertical arrows are induced by $\phi$:
\[
\xymatrix{0 \ar[r]& A(T_1) \ar[r]\ar[d]& \operatorname{H}^1(k,\operatorname{Pic}\overline{X_1})^{\sim} \ar[r]\ar[d] &\Sha(T_1)\ar[d] \ar[r]& 0\\
0 \ar[r]& A(T_2) \ar[r]& \operatorname{H}^1(k,\operatorname{Pic}\overline{X_2})^{\sim} \ar[r] &\Sha(T_2) \ar[r]& 0.
}
\]
\end{lemma}

\begin{proof}
This follows from Voskresenski\u{\i}'s proof of \cite[Theorem 6]{Vosk}.
\end{proof}

\begin{corollary}\label{cor:CT}
Let $\phi:T_1\to T_2$ be an isogeny of algebraic tori over $k$ with kernel $\mu$.
Let $X_1$ and $X_2$ be smooth compactifications of $T_1$ and $T_2$, respectively. Then for any prime $p$ such that $p\nmid |\mu(\overline{k})|$, we obtain a commutative diagram with exact rows as follows, where the vertical isomorphisms are induced by $\phi$:
\[
\xymatrix{0 \ar[r]& A(T_1)_{(p)} \ar[r]\ar[d]_{\cong}& \operatorname{H}^1(k,\operatorname{Pic}\overline{X_1})^{\sim}_{(p)} \ar[r]\ar[d]_{\cong} &\Sha(T_1)_{(p)}\ar[d]_{\cong} \ar[r]& 0\\
0 \ar[r]& A(T_2)_{(p)} \ar[r]& \operatorname{H}^1(k,\operatorname{Pic}\overline{X_2})^{\sim}_{(p)} \ar[r] &\Sha(T_2)_{(p)} \ar[r]& 0.
}
\]
\end{corollary}

\begin{proof}
Let $\psi:T_2\to T_1$ be the dual isogeny. Then $\psi\circ\phi$ is multiplication by $|\mu(\overline{k})|$
on $T_1$. Now apply Lemma~\ref{lem:Voskfunct}.
\end{proof}

The following theorem is an application of Corollary~\ref{cor:CT} to norm one tori which will be very useful later in this section as well as in Section~\ref{sect_applications}.

\begin{theorem}\label{mid_gp_general_norm}
Let $L/K/k$ be a tower of finite extensions. 
Let $T_0=R^1_{L/k}\bbG_m$, let $T=R^1_{K/k}\bbG_m$ and let $X_0$ and $X$ be smooth compactifications of $T_0$ and $T$, respectively. Then for a prime $p$ with $p\nmid [L:K]$ we obtain a commutative diagram with exact rows as follows, where the vertical isomorphisms are induced by the natural inclusion $j: T\hookrightarrow T_0$:
\[
\xymatrix{0 \ar[r]& A(T)_{(p)} \ar[r]\ar[d]_{\cong}& \operatorname{H}^1(k,\operatorname{Pic}\overline{X})^{\sim}_{(p)} \ar[r]\ar[d]_{\cong} &\Sha(T)_{(p)}\ar[d]_{\cong} \ar[r]& 0\\
0 \ar[r]& A(T_0)_{(p)} \ar[r]& \operatorname{H}^1(k,\operatorname{Pic}\overline{X_0})^{\sim}_{(p)} \ar[r] &\Sha(T_0)_{(p)} \ar[r]& 0.
}
\]
Alternatively, the norm map $N_{L/K}:T_0\twoheadrightarrow T$ can be used to obtain a similar commutative diagram with the direction of the vertical isomorphisms reversed.
\end{theorem}

\begin{proof}
Let $S$ be the kernel of $N_{L/K}:R_{L/k}\bbG_{m}\to R_{K/k}\bbG_m$ and let $i:S\to R_{L/k}\bbG_{m}$ be the inclusion. Then the following diagram with exact rows commutes:
\begin{displaymath}
\xymatrix{
1\ar[r]& S\ar@{=}[d]\ar[r]^{i}&R^1_{L/k}\bbG_m\ar[r]^{N_{L/K}} \ar@{_(->}[d]& R^1_{K/k}\bbG_m\ar[r]\ar@{_(->}[d] & 1\\
1\ar[r]& S\ar[r]^{i}&R_{L/k}\bbG_m\ar[r]^{N_{L/K}} & R_{K/k}\bbG_m\ar[r] & 1.}
\end{displaymath}
Let $d=[L:K]$ and let $[d]$ denote the map $x\mapsto x^d$. The natural inclusion $j:R_{K/k}\bbG_{m}\to R_{L/k}\bbG_m$ satisfies $N_{L/K}\circ j=[d]$. Using $i$ and $j$, we obtain a surjective morphism 
\[S\times R_{K/k}\bbG_m\to R_{L/k}\bbG_m\]
whose kernel $\mu$ is finite for dimension reasons. Moreover, since $N_{L/K}\circ j=[d]$, $\mu$ is killed by $d$. 
Let $Z$, $W$ and $W_0$ be smooth compactifications of $S$, $R_{K/k}\bbG_m$ and $R_{L/k}\bbG_m$, respectively. 
By \cite[Lemma~3]{Vosk}, $\Pic(\overline{Z\times W})=\Pic\overline{Z}\oplus\Pic\overline{W}$. 
Thus, Corollary~\ref{cor:CT} yields
\[\operatorname{H}^1(k, \Pic\overline{Z})_{(p)}\oplus \operatorname{H}^1(k, \Pic\overline{W})_{(p)}\cong \operatorname{H}^1(k, \Pic\overline{W_0})_{(p)}.\]
Furthermore, $R_{K/k}\bbG_m$ and $R_{L/k}\bbG_m$ are $k$-rational so $\operatorname{H}^1(k, \Pic\overline{W})=\operatorname{H}^1(k, \Pic\overline{W_0})=0$ and hence $\operatorname{H}^1(k, \Pic\overline{Z})_{(p)}=0$. Therefore, $\Sha(S)_{(p)}=A(S)_{(p)}=0$ by Theorem~\ref{thmvosk}. Now the result follows from applying Corollary~\ref{cor:CT} to the surjective morphism
\[S\times R^1_{K/k}\bbG_m\to R^1_{L/k}\bbG_m\]
whose finite kernel is killed by $d$. 
\end{proof}

The following special case of Theorem~\ref{mid_gp_general_norm} reduces the calculation of $A(T)$,
$\operatorname{H}^1(k,\operatorname{Pic}\overline{X})$ and $\Sha(T)$ to the case where $K/k$ is the fixed field of a $p$-group.

\begin{corollary}\label{cor:mid_gp_general_intro}
Let $L/K/k$ be a tower of finite extensions with $L/k$ Galois. Let $G=\Gal(L/k)$ and $H=\Gal(L/K)$. For $p$ prime, let $H_p$ denote a Sylow $p$-subgroup of $H$ and let $K_p$ denote its fixed field. Let $X$ and $X_p$ be smooth compactifications of $T=R^1_{K/k}\bbG_m$ and $T_p=R^{1}_{K_p/k} \mathbb{G}_m$, respectively. Then we obtain a commutative diagram with exact rows as follows, where the vertical isomorphisms are induced by the natural inclusion $T\hookrightarrow T_p$:
\[
\xymatrix{0 \ar[r]& A(T)_{(p)} \ar[r]\ar[d]_{\cong}& \operatorname{H}^1(k,\operatorname{Pic}\overline{X})^{\sim}_{(p)} \ar[r]\ar[d]_{\cong} &\Sha(T)_{(p)}\ar[d]_{\cong} \ar[r]& 0\\
0 \ar[r]& A(T_p)_{(p)} \ar[r]& \operatorname{H}^1(k,\operatorname{Pic}\overline{X_p})^{\sim}_{(p)} \ar[r] &\Sha(T_p)_{(p)} \ar[r]& 0.
}
\]
Alternatively, the norm map $N_{K_p/K}:T_p\twoheadrightarrow T$ can be used to obtain a similar commutative diagram with the direction of the vertical isomorphisms reversed.
\end{corollary}

As a consequence of Corollary~\ref{cor:mid_gp_general_intro}, we obtain the following result which deals with the two extremes in terms of the power of $p$ dividing $|H|$. 

\begin{corollary}\label{gp_containment_intro}
Retain the notation of Corollary~\ref{cor:mid_gp_general_intro}.
\begin{enumerate}[label=(\roman{*}),leftmargin=*]
\item \label{no p middle gp} If $p \nmid |H|$, then $\operatorname{H}^1(k,\Pic \overline{X})_{(p)} \cong \operatorname{H}^3(G,\Z)_{(p)}$.
    \item If $H$ contains a Sylow $p$-subgroup of $G$, then $\operatorname{H}^1(k,\Pic \overline{X})_{(p)}=0$.
\end{enumerate}
\end{corollary}

\begin{proof}
\begin{enumerate}[label=(\roman{*})]
 \item\label{gp_containment2} Follows from Theorem~\ref{thmcs} and Corollary~\ref{cor:mid_gp_general_intro}.
  \item\label{gp_containment1} Follows from Lemma~\ref{lem:degkillsmiddlegp}. \qedhere
  \end{enumerate}
\end{proof}

We additionally obtain the following result when $H$ is a \emph{Hall subgroup} of $G$, i.e.~a subgroup such that $\gcd(|H|,[G:H])=1$.

\begin{corollary}\label{hall_middle}
Retain the notation of Corollary~\ref{cor:mid_gp_general_intro}.
If $H$ is a Hall subgroup of $G$, then
\begin{eqnarray*}\operatorname{H}^1(k, \Pic\overline{X})&\cong& \prod\limits_{p \nmid |H|} \operatorname{H}^3(G,\Z)_{(p)},\\
\mathfrak{K}(K/k)&\cong &\prod\limits_{p \nmid |H|}\mathfrak{K}(L/k)_{(p)},
\ \ \textrm{ and}\\
A(T)&\cong& \prod\limits_{p \nmid |H|}A(T_0)_{(p)},
\end{eqnarray*}
where $T=R^1_{K/k}\bbG_m$ and $T_0=R^1_{L/k}\bbG_m$.
\end{corollary}

\begin{proof}
 Follows from Corollaries~\ref{cor:mid_gp_general_intro} and~\ref{gp_containment_intro}, Lemma~\ref{lem:degkillsmiddlegp} and Corollary~\ref{cor:nop}.
\end{proof}

We now drop the assumption that $L/k$ is Galois and return to the more general setting of Theorem~\ref{mid_gp_general_norm}.

\begin{corollary}
\label{cor:goingup1}
Retain the notation of Theorem~\ref{mid_gp_general_norm}. Then:
\begin{enumerate}[label=(\roman{*})]
    \item $A(T)$ is killed by $[L:K]\cdot\exp(A(T_0))$;
    \item $\operatorname{H}^1(k, \Pic\overline{X})$ is killed by $[L:K]\cdot\exp(\operatorname{H}^1(k, \Pic\overline{X_0}))$;
    \item $\Sha(T)$ is killed by $[L:K]\cdot\exp(\Sha(T_0))$.
\end{enumerate}    
  
\end{corollary}

\begin{proof}
We give the proof for $A(T)$ -- the other proofs are analogous. Let $d=[L:K]$, $e=\exp(A(T_0))$ and let $x\in A(T)$. Since $N_{L/K}\circ j=[d]$, we have $x^{de}=N_{L/K}(j(x)^e)=1$, as $j(x)\in A(T_0)$.
\end{proof}

\begin{corollary}\label{cor:goingup2}
Retain the notation of Theorem~\ref{mid_gp_general_norm}. 
\begin{enumerate}[label=(\roman{*}),leftmargin=*]
    \item If $\exp(A(T_0))\cdot [L:K]$ is coprime to $[K:k]$, then weak approximation holds for $K/k$.
    \item If $\exp(\Sha(T_0))\cdot [L:K]$ is coprime to $[K:k]$, then the HNP holds for $K/k$.
\end{enumerate} 
\end{corollary}

\begin{proof}
This follows immediately from Corollaries~\ref{cor:goingup1} and~\ref{cor:nop}.
\end{proof}

The following result is a slight generalization of \cite[Proposition 1]{Gurak}. 

\begin{proposition}
\label{GurakProp1}
Let $L/K/k$ be a tower of finite extensions and let $d=[L:K]$. Then the map $x\mapsto x^d$ induces a group homomorphism
$$\varphi: \mathfrak{K}(K/k)\to \mathfrak{K}(L/k)$$

\noindent with $\Ker\varphi\subset \mathfrak{K}(K/k)[d]$ and $\{x^d\mid x\in \mathfrak{K}(L/k)\}\subset \im\varphi$. In particular, if $|\mathfrak{K}(K/k)|$ is coprime to $d$, then $\varphi$ induces an isomorphism $\mathfrak{K}(K/k) \cong \{x^d\mid x\in \mathfrak{K}(L/k)\}$.
\end{proposition}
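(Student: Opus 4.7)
The plan is to construct $\varphi$ as the map induced by raising to the $d$-th power on $k^*$. The conceptual heart of the argument is the standard identity that, viewing $\bbA_K^*$ inside $\bbA_L^*$ via the natural (diagonal) inclusion, the relative norm $N_{L/K}$ restricts to $y\mapsto y^d$; at each place $v$ of $K$ this splits as $\prod_{w\mid v}[L_w:K_v]=[L:K]=d$. Combined with transitivity of norms $N_{L/k}=N_{K/k}\circ N_{L/K}$, this gives: if $x=N_{K/k}(y)$ with $y\in\bbA_K^*$, then $x^d=N_{K/k}(y^d)=N_{K/k}(N_{L/K}(y))=N_{L/k}(y)$. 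So I would first check that $x\mapsto x^d$ carries $k^*\cap N_{K/k}\bbA_K^*$ into $k^*\cap N_{L/k}\bbA_L^*$, and by the same argument at the level of $K^*\subset L^*$ that it sends $N_{K/k}K^*$ into $N_{L/k}L^*$; hence it descends to a well-defined homomorphism $\varphi:\mathfrak{K}(K/k)\to\mathfrak{K}(L/k)$.

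Next I would verify the kernel and image bounds. For the kernel, if $x\in k^*\cap N_{K/k}\bbA_K^*$ satisfies $x^d\in N_{L/k}L^*$, then by transitivity $N_{L/k}L^*=N_{K/k}(N_{L/K}L^*)\subset N_{K/k}K^*$, so $x^d$ is trivial in $\mathfrak{K}(K/k)$, proving $\Ker\varphi\subset\mathfrak{K}(K/k)[d]$. For the image, given $y\in k^*\cap N_{L/k}\bbA_L^*$, write $y=N_{L/k}(\alpha)=N_{K/k}(N_{L/K}(\alpha))$ with $\alpha\in\bbA_L^*$; thus $y$ already represents a class in $\mathfrak{K}(K/k)$, and applying $\varphi$ sends this class to $y^d\in\mathfrak{K}(L/k)$, yielding $\{y^d\mid y\in\mathfrak{K}(L/k)\}\subset\im\varphi$.

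Finally, when $|\mathfrak{K}(K/k)|$ is coprime to $d$, multiplication by $d$ is an automorphism of $\mathfrak{K}(K/k)$; in particular $\mathfrak{K}(K/k)[d]=0$, so $\varphi$ is injective and therefore gives an isomorphism onto its image, namely $\{y^d\mid y\in\mathfrak{K}(L/k)\}$. I do not anticipate any serious obstacle: the only point to handle with care is the bookkeeping around the identifications $N_{L/K}|_{\bbA_K^*}=[d]$ and $N_{L/K}|_{K^*}=[d]$, since both are needed in tandem to ensure that $\varphi$ is well-defined on cosets. Once these are in place, the three assertions (well-definedness, kernel containment, image containment) follow by nearly identical diagram-free manipulations with transitivity of norms.
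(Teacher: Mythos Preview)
Your approach is essentially the same as the paper's: both rest on the three inclusions $N_{L/k}\bbA_L^*\subset N_{K/k}\bbA_K^*$, $N_{L/k}L^*\subset N_{K/k}K^*$, and $(N_{K/k}\bbA_K^*)^d\subset N_{L/k}\bbA_L^*$, which you derive explicitly via $N_{L/K}|_{\bbA_K^*}=[d]$ and transitivity of norms. The well-definedness, kernel bound, and image bound are all handled correctly.

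There is one small gap in your final paragraph. You have shown $\{y^d\mid y\in\mathfrak{K}(L/k)\}\subset\im\varphi$ and that $\varphi$ is injective when $\gcd(|\mathfrak{K}(K/k)|,d)=1$, but you then assert that the image of $\varphi$ \emph{equals} $\{y^d\mid y\in\mathfrak{K}(L/k)\}$ without justifying the reverse inclusion $\im\varphi\subset\{y^d\mid y\in\mathfrak{K}(L/k)\}$. This does not follow from injectivity alone. The paper notes this extra step explicitly; the argument is that since $[d]$ is an automorphism of $\mathfrak{K}(K/k)$, every class $x\in\mathfrak{K}(K/k)$ can be written as $z^d$ for some $z\in\mathfrak{K}(K/k)$, whence $\varphi(x)=\varphi(z)^d\in\{y^d\mid y\in\mathfrak{K}(L/k)\}$. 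With this line added, your proof is complete and matches the paper's.
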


\begin{proof}
This follows from the fact that under the identification of $\operatorname{H}^1(k,R^1_{K/k}\bbG_m)$ and $\operatorname{H}^1(k,R^1_{L/k}\bbG_m)$ with $k^*/N_{K/k}K^*$ and $k^*/N_{L/k}L^*$, the maps $j$ and $N_{L/K}$ from Theorem~\ref{mid_gp_general_norm} induce multiplication by $d$ and projection respectively. Alternatively, observe that the proposition follows from the inclusions $N_{L/k}\bbA_L^*\subset  N_{K/k}\bbA_K^*$, $N_{L/k}L^*\subset  N_{K/k}K^*$, $(N_{K/k}\bbA_K^*)^d \subset  N_{L/k}\bbA_L^*$ and $(N_{K/k}K^*)^d\subset N_{L/k}L^*$. If $|\mathfrak{K}(K/k)|$ is coprime to $d$, then $\im \varphi \subset \{x^d\mid x\in \mathfrak{K}(L/k)\}$.
\end{proof}

Next, we establish a generalization of Gurak’s criterion (see \cite[Proposition 2]{Gurak}) for the validity of the HNP in a compositum of two subextensions with coprime degrees.

\begin{proposition}
\label{GurakProp2}
Let $L/k$ be a finite extension with subextensions $K/ k$ and $M/ k$ such that \mbox{$L=KM$}. Let $T=R^1_{L/k}\bbG_m$, $T_1=R^1_{K/k}\bbG_m$ and $T_2=R^1_{M/k}\bbG_m$ and let $X, X_1$ and $X_2$ be their respective smooth compactifications.
Then we obtain a commutative diagram with exact rows as follows, where the vertical homomorphisms are induced by the natural inclusions $T_1\hookrightarrow T$ and $T_2\hookrightarrow T$:
\[
\xymatrix{0 \ar[r]& A(T_1)\oplus A(T_2) \ar[r]\ar[d]& \operatorname{H}^1(k,\operatorname{Pic}\overline{X_1})^{\sim}\oplus \operatorname{H}^1(k,\operatorname{Pic}\overline{X_2})^{\sim} \ar[r]\ar[d] &\Sha(T_1)\oplus \Sha(T_2) \ar[d] \ar[r]& 0\\
0 \ar[r]& A(T) \ar[r]& \operatorname{H}^1(k,\operatorname{Pic}\overline{X})^{\sim} \ar[r] &\Sha(T) \ar[r]& 0.
}
\]

\noindent If $[K:k]$ and $[M:k]$ are coprime, then the vertical maps in the diagram are isomorphisms. 

\end{proposition}

\begin{proof}
The commutative diagram comes from Lemma~\ref{lem:Voskfunct}. If $[K:k]$ and $[M:k]$ are coprime, then any prime number divides at most one of $[L:K]$ and $[L:M]$, whence Lemma~\ref{lem:degkillsmiddlegp} and Theorem~\ref{mid_gp_general_norm} show that the vertical maps in the diagram are isomorphisms. 
\end{proof}

\begin{proposition}\label{GurakProp3}
In the notation of Proposition~\ref{GurakProp2}, the map $\Sha(T_1) \oplus \Sha(T_2) \to \Sha(T)$ induces the following homomorphism on the relevant knot groups
\begin{eqnarray*}
\varphi: \mathfrak{K}(K/k)\times \mathfrak{K}(M/k)\to \mathfrak{K}(L/k)\\
(x,y)\mapsto x^n y^m
\end{eqnarray*}
where $m=[L:M]$ and $n=[L:K]$. Moreover, if $a=\exp(\mathfrak{K}(K/k))$, $b=\exp(\mathfrak{K}(M/k))$, and $h=\gcd(m,n)$, then 
$\varphi$ satisfies $\Ker\varphi\subset \mathfrak{K}(K/k)[bn]\times \mathfrak{K}(M/k)[am]$ and $\{z^h\mid z\in \mathfrak{K}(L/k)\}\subset \im\varphi$.

\end{proposition}

\begin{proof}
This follow from the argument in the proof of Proposition~\ref{GurakProp1}.
\end{proof}

We end this section by proving a version of \cite[Theorem 1]{Gurak} for weak approximation in nilpotent
Galois extensions. We require the following description of the defect of weak approximation:

\begin{proposition}\label{Tate_WA}
Let $T$ be a torus defined over a number field $k$ and split by a finite Galois extension $L/k$ with $G=\operatorname{Gal}(L/k)$.
Then 
\begin{equation}\label{eq:WA_general}
    A(T)^{\sim}=\im\left(\Sha^2_\omega(G,\widehat{T})\xrightarrow{\Res}  \prod_{v \in \Omega_k}\mathrm{H}^2(D_v,\widehat{T})\right)
\end{equation}
    \noindent where $D_v=\Gal(L_v/k_v)$ is the decomposition group at $v$. If $T=R^1_{L/k}\mathbb{G}_m$ then
    \begin{equation}\label{eq:WA_galois}
    A(T)^{\sim}=\im\left(\mathrm{H}^3(G,\bbZ)\xrightarrow{\Res}  \prod_{v \in \Omega_k}\mathrm{H}^3(D_v,\bbZ)\right).
\end{equation}
\end{proposition}

\begin{proof}
The equality in \eqref{eq:WA_general} follows from Proposition~\ref{prop:dualVosk}. Then \eqref{eq:WA_galois} follows from the isomorphism~\eqref{eq:tate_mid} of Theorem~\ref{thmcs} and the analogous result that $\mathrm{H}^2(D_v,\widehat{T})=\mathrm{H}^3(D_v,\bbZ)$ in this setting.
\end{proof}

We make use of the following weak approximation version of \cite[Lemma~2.3]{GurakcyclicSylow}:

\begin{lemma}\label{lem:wa_composing}
Let $K/k$ and $M/k$ be finite subextensions of $L/k$ such that $[K:k]$ and $[M:k]$ are coprime. If weak approximation holds for $R^1_{KM/M}\mathbb{G}_m$, then it holds for $R^1_{K/k}\mathbb{G}_m$. Under the additional assumption that $K/k$ is Galois, weak approximation for $R^1_{K/k}\mathbb{G}_m$ implies weak approximation for $R^1_{KM/M}\mathbb{G}_m$.
\end{lemma}

\begin{proof} Let $T=R^1_{K/k}\mathbb{G}_m$, $T_M=T\times_k M$ and $T_K=T\times_k K$. Suppose first that weak approximation holds for $R^1_{KM/M}\bbG_m=T_M$. By Lemma~\ref{lem:degkillsmiddlegp} and Theorem~\ref{thmvosk}, weak approximation holds for $T_K$.
To complete the proof, observe that weak approximation for $T_K$ and $T_M$ implies weak approximation for $R_{K/k}T_K$ and $R_{M/k}T_M$.
Since $[K:k]$ and $[M:k]$ are coprime, the surjective morphism of algebraic groups
\begin{eqnarray*}
 R_{K/k}T_K  \times R_{M/k}T_M  & \to & T\\
 (x,y) & \to &  N_{K/k}(x)N_{M/k}(y)
\end{eqnarray*}
has a section. Therefore, weak approximation for $T$ follows from weak approximation for $R_{K/k}T_K$ and $R_{M/k}T_M$.

Now suppose that $K/k$ is Galois and that weak approximation holds for $R^1_{K/k}\mathbb{G}_m$. Then $KM/M$ is Galois with Galois group isomorphic to $\Gal(K/k)$. Let $w$ be a place of $M$ and let $v$ be the place of $k$ lying below $w$.
The various restriction maps give a commutative diagram
\[
\xymatrix{
\operatorname{H}^3(\Gal(K/k),\Z)\ar[d]^{ \Res_v}\ar[r]^{\cong} & \operatorname{H}^3(\Gal(KM/M),\Z)\ar[d]^{ \Res_w}\\
\operatorname{H}^3(D_v,\Z)\ar[r] & \operatorname{H}^3(D_w,\Z).
}
\]

\noindent Since weak approximation holds for $R^1_{K/k}\mathbb{G}_m$, isomorphism~\eqref{eq:WA_galois} of Proposition~\ref{Tate_WA} shows that $\Res_v$ is trivial, and hence $\Res_w$ is also trivial. As $w$ was arbitrary, weak approximation for $R^1_{KM/M}\mathbb{G}_m$ follows from~\eqref{eq:WA_galois}.
\end{proof}

\begin{remark}
The hypothesis that $K/k$ is Galois in the second implication of Lemma~\ref{lem:wa_composing} is necessary. To see this, consider a Galois extension $L/k$ with Galois group $G=C_3 \times S_3$ and with a decomposition group $D_v$ containing the
Sylow $3$-subgroup of $G$ for some place $v$ of $k$ (such an extension always exists, see Section~\ref{sec:eg}). 
Let $K/k$ and $M/k$ be subextensions of $L/k$ of degree $9$ and $2$, respectively. One can verify that the invariant $\operatorname{H}^1(k,\Pic \overline{X})$ vanishes for $K/k$ and thus weak approximation holds for $R^1_{K/k}\mathbb{G}_m$ by Theorem~\ref{thmvosk}. On the other hand, $KM/M=L/M$ is Galois with Galois group $C_3 \times C_3$ and decomposition group $C_3\times C_3$ for a prime of $M$ above $v$. 
It follows that weak approximation fails for $R^1_{KM/M}\mathbb{G}_m$ by isomorphism~\eqref{eq:WA_galois} of Proposition~\ref{Tate_WA}. See \cite{Liang} for some other examples of varieties over number fields that satisfy weak approximation over the base field but not over a quadratic extension.
\end{remark}

We also require the following well-known fact:

\begin{proposition}
\label{prop:noptorsion}
Let $G$ be a finite group and $G_p$ a Sylow $p$-subgroup of $G$. For any $G$-module $A$ and any $n\in\bbZ_{>0}$, the restriction map
$$\Res^G_{G_p}: \mathrm{H}^n(G ,A) \rightarrow \mathrm{H}^n(G_p ,A)$$ 
maps $\mathrm{H}^n(G ,A)_{(p)}$ injectively into $ \mathrm{H}^n(G_p ,A)$. 
\end{proposition}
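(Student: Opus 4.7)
The plan is to invoke the standard composition formula relating corestriction and restriction in group cohomology, namely that $\Cor^G_{G_p} \circ \Res^G_{G_p}$ equals multiplication by the index $[G:G_p]$ on $\operatorname{H}^n(G,A)$. This is a classical result that holds for any subgroup of finite index and any $n \geq 0$ (and in fact for Tate cohomology as well).

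First, I would recall this identity and specialize to the Sylow $p$-subgroup $G_p$. The crucial observation is that the index $[G:G_p]$ is, by definition of a Sylow $p$-subgroup, coprime to $p$. Hence multiplication by $[G:G_p]$ acts as an automorphism on any $p$-primary abelian group, in particular on $\operatorname{H}^n(G,A)_{(p)}$.

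Next, I would restrict the map $\Cor^G_{G_p} \circ \Res^G_{G_p} = [G:G_p] \cdot \operatorname{id}$ to the subgroup $\operatorname{H}^n(G,A)_{(p)} \subseteq \operatorname{H}^n(G,A)$. Since the right-hand side becomes an automorphism of $\operatorname{H}^n(G,A)_{(p)}$, the restriction map $\Res^G_{G_p}$ must be injective when restricted to $\operatorname{H}^n(G,A)_{(p)}$, which is exactly what we want.

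There is no real obstacle here; the result is essentially a one-line consequence of the corestriction-restriction formula. The only minor point to take care of is to make sure that the composition formula is cited in the correct direction and that the argument is phrased for the $p$-primary summand rather than for the whole cohomology group. A standard reference such as Cassels-Fröhlich or Serre's \emph{Local Fields} (Chapter VII) would be cited for the composition formula.
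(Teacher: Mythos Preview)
Your proposal is correct and is exactly the standard argument; the paper itself simply cites \cite[Theorem~III.10.3]{Brown}, whose proof is precisely the $\Cor\circ\Res=[G:G_p]$ computation you outline.
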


\begin{proof}
See, for example, \cite[Theorem III.10.3]{Brown}.
\end{proof}

\begin{proposition}\label{prop:Gurak_WA_kp_Lp}
Let $L/k$ be a Galois extension such that $G=\Gal(L/k)$ is nilpotent. For every prime $p$, let $G_p$ be a Sylow $p$-subgroup of $G$. Let $k_p$ and $L_p$ be the fixed fields of the subgroups $G_p$ and $\prod\limits_{q \neq p} G_q$, respectively. The following are equivalent:

\begin{enumerate}[label=(\roman{*})]
    \item\label{Gurak_WA_1} Weak approximation holds for $R^1_{L/k}\mathbb{G}_m$.
    \item\label{Gurak_WA_2} Weak approximation holds for each $R^1_{L_p/k}\mathbb{G}_m$.
    \item\label{Gurak_WA_3} Weak approximation holds for each $R^1_{L/k_p}\mathbb{G}_m$.
\end{enumerate}
\end{proposition}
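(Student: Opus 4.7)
The plan is to exploit the structural features of a nilpotent group — namely that $G$ decomposes as the internal direct product $G = G_p \times \widehat{G}_p$ where $\widehat{G}_p := \prod_{q \neq p} G_q$ — to reduce everything to two already-established results: the ``coprime decomposition'' for defects of weak approximation in Proposition~\ref{prop:Gurak_WA_Prop} and the ``base change'' lemma \ref{lem:wa_composing}. With this decomposition in hand, $L_p = L^{\widehat{G}_p}$ and $k_p = L^{G_p}$ satisfy $L_p \cap k_p = k$ and $L_p \cdot k_p = L$, and their degrees $[L_p:k] = |G_p|$ and $[k_p:k] = |\widehat{G}_p|$ are coprime. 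Moreover, $k_p/k$ is itself Galois with nilpotent group $\widehat{G}_p$, and for every prime $q \neq p$ the subfield $L_q$ lies inside $k_p$ and coincides with the ``$q$-part'' computed relative to the smaller extension $k_p/k$; this compatibility is what makes induction on the number of prime divisors of $|G|$ go through cleanly.

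For the equivalence \ref{Gurak_WA_1} $\Leftrightarrow$ \ref{Gurak_WA_2}, I would write $L = L_p \cdot k_p$ with coprime degrees and apply Proposition~\ref{prop:Gurak_WA_Prop} to get a canonical splitting
\[
A(R^1_{L/k}\mathbb{G}_m) \cong A(R^1_{L_p/k}\mathbb{G}_m) \times A(R^1_{k_p/k}\mathbb{G}_m).
\]
Iterating (or, more formally, inducting on the number of primes dividing $|G|$ using the compatibility noted above, with the base case ``$G$ is a $p$-group'' being trivial because then $L_p = L$ and $k_p = k$), one obtains
\[
A(R^1_{L/k}\mathbb{G}_m) \cong \prod_{p} A(R^1_{L_p/k}\mathbb{G}_m),
\]
which yields the equivalence because each factor vanishing is equivalent to the product vanishing.

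For the equivalence \ref{Gurak_WA_2} $\Leftrightarrow$ \ref{Gurak_WA_3}, I would fix a prime $p$ and apply Lemma~\ref{lem:wa_composing} with ``$K$'' taken to be $L_p$ and ``$M$'' taken to be $k_p$: the hypothesis of coprimality of $[L_p:k]$ and $[k_p:k]$ is exactly what we verified in the first paragraph, and $L_p \cdot k_p = L$, so the lemma gives weak approximation for $R^1_{L_p/k}\mathbb{G}_m$ if and only if weak approximation holds for $R^1_{L/k_p}\mathbb{G}_m$. Doing this for each prime $p$ dividing $|G|$ closes the loop.

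There is no serious obstacle; the work is essentially bookkeeping. The only mildly delicate point is checking, in the induction used for \ref{Gurak_WA_1} $\Leftrightarrow$ \ref{Gurak_WA_2}, that the ``$q$-parts'' of $k_p/k$ (for $q \neq p$) really are the original fields $L_q$, so that the inductive hypothesis applied to $k_p/k$ outputs exactly the factors appearing in \ref{Gurak_WA_2}. This is immediate from the normality of each Sylow subgroup inside the nilpotent group $G$.
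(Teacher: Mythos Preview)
Your proof is correct but proceeds differently from the paper. The paper argues cyclically: \ref{Gurak_WA_1} $\Rightarrow$ \ref{Gurak_WA_2} via Corollary~\ref{cor:wa_coprime_degrees}, \ref{Gurak_WA_2} $\Rightarrow$ \ref{Gurak_WA_3} via Lemma~\ref{lem:wa_composing} (as you do), and then \ref{Gurak_WA_3} $\Rightarrow$ \ref{Gurak_WA_1} by a direct cohomological computation using Proposition~\ref{Tate_WA} and the injectivity of restriction to Sylow subgroups (Proposition~\ref{prop:noptorsion}). You instead invoke the stronger Proposition~\ref{prop:Gurak_WA_Prop} and induct on the number of prime divisors of $|G|$ to obtain the product decomposition $A(R^1_{L/k}\bbG_m)\cong\prod_p A(R^1_{L_p/k}\bbG_m)$, which settles \ref{Gurak_WA_1} $\Leftrightarrow$ \ref{Gurak_WA_2} in one stroke and bypasses the cohomological step entirely. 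Your route has the advantage of yielding the sharper statement that the defect actually splits as a product, not merely that the vanishings are equivalent; on the other hand, the paper's cohomological argument for \ref{Gurak_WA_3} $\Rightarrow$ \ref{Gurak_WA_1} does not use that $G$ is nilpotent (as the authors note immediately after the proof), a feature your argument does not retain since your use of Proposition~\ref{prop:Gurak_WA_Prop} depends on the coprime decomposition $L=L_p\cdot k_p$ furnished by nilpotence.
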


\begin{proof}
\ref{Gurak_WA_1} $\implies$ \ref{Gurak_WA_2}: Follows from Corollary~\ref{cor:goingup2}.

\ref{Gurak_WA_2} $\implies$ \ref{Gurak_WA_3}: 
Follows from Lemma~\ref{lem:wa_composing}.

\ref{Gurak_WA_3} $\implies$ \ref{Gurak_WA_1}: We prove $A(R^1_{L/k}\mathbb{G}_m)_{(p)}=0$ for every prime $p$. Let $v$ be a place of $k$ and let $w$ be a place of $k_p$ above $v$.
The various restriction maps give a commutative diagram
\begin{equation*}\label{diag:cor}
\xymatrix{\operatorname{H}^3(G,\Z)_{(p)} \ar[r]^{\Res_1} \ar[d]^{\Res_4} &  \operatorname{H}^3(D_v,\Z)_{(p)} \ar[d]^{\Res_2}\\
\operatorname{H}^3(G_p,\Z) \ar[r]^{\Res_3} &  \operatorname{H}^3(D_w,\Z)}
\end{equation*}
\noindent As weak approximation holds for $R^1_{L/k_p}\mathbb{G}_m$, isomorphism~\eqref{eq:WA_galois} of Proposition \ref{Tate_WA} yields $\im \Res_3=0$. Furthermore, Proposition \ref{prop:noptorsion} shows that $\Res_2$ is injective. It follows that $\im \Res_1 = 0$ and, since $v$ was arbitrary, we conclude that $A(R^1_{L/k}\mathbb{G}_m)_{(p)}=0$ by~\eqref{eq:WA_galois}. \end{proof}

\begin{remark}
We note that the implication \ref{Gurak_WA_3} $\implies$ \ref{Gurak_WA_1} in Proposition~\ref{prop:Gurak_WA_kp_Lp} does not require the hypothesis that $G$ is nilpotent. This is analogous to the corresponding result for the HNP -- see Gurak's remarks preceding \cite[Theorem 2]{Gurak}. 
\end{remark}

\section{The first obstruction to the Hasse norm principle}\label{sec:1obs} 
In this section, we give some background concerning the first obstruction to the Hasse norm principle and then go on to prove Theorem~\ref{thm:unramified_description_intro} and Corollary~\ref{an_sn_p_part_intro}.
Throughout the section, we fix a tower of number fields $L/K/k$ such that $L/k$ is Galois. Let $X$ and $X_0$ be smooth compactifications of the tori $R^1_{K/k} \bbG_m$ and $R^1_{L/k} \bbG_m$, respectively. Applying Lemma~\ref{lem:Voskfunct} to the norm map \mbox{$N_{L/K}:R^1_{L/k} \bbG_m \to R^1_{K/k} \bbG_m$} gives a commutative diagram with exact rows as follows, where the vertical arrows are induced by $N_{L/K}$:
\begin{equation}\label{eq:first_obs}
\xymatrix{0 \ar[r]& A(R^1_{L/k} \bbG_m ) \ar[r]\ar[d]& \operatorname{H}^1(k,\operatorname{Pic}\overline{X_0})^{\sim} \ar[r]\ar[d]^{f_{L/K}} &\Sha(R^1_{L/k} \bbG_m )\ar[d]^{g_{L/K}} \ar[r]& 0\\
0 \ar[r]& A(R^1_{K/k} \bbG_m) \ar[r]& \operatorname{H}^1(k,\operatorname{Pic}\overline{X})^{\sim} \ar[r] &\Sha(R^1_{K/k} \bbG_m) \ar[r]& 0.
}
\end{equation}

\begin{definition}\label{def:cokers}
In the notation of diagram \eqref{eq:first_obs}, we define
\begin{enumerate}
    \item $\mathfrak{F}(L/K/k):=\Coker(g_{L/K})=(k^*\cap N_{K/k}\bbA_K^*)/(k^*\cap N_{L/k}\bbA_L^*)N_{K/k}K^*$, called the \emph{first obstruction to the HNP for $K/k$ corresponding to the tower $L/K/k$}, see \cite[Definition 1]{DP};
    \item $\mathfrak{F}_{nr}(L/K/k):=\Coker(f_{L/K})$, called the \textit{unramified cover of $\mathfrak{F}(L/K/k)$}.
\end{enumerate}
\end{definition}

Clearly the knot group $\mathfrak{K}(K/k)$ (which is sometimes called the total obstruction to the HNP) surjects onto $\mathfrak{F}(L/K/k)$ and $\mathfrak{F}(L/K/k)$ equals $\mathfrak{K}(K/k)$ if the HNP holds for $L/k$. In \cite{DP}, Drakokhrust and Platonov give another very useful sufficient criterion for this equality to hold, as follows:

\begin{theorem}{\cite[Theorem 3]{DP}}\label{thm3DP}
Set $G=\Gal(L/k),H=\Gal(L/K)$. Let $G_1, \dots , G_r$ be subgroups of $G$ and let $H_1, \dots , H_r$ be subgroups of $H$ such that $H_i\subset H\cap G_i$ for each $i$. Let $K_i=L^{H_i}$ and $k_i=L^{G_i}$. Suppose that the HNP holds for the extensions $K_i/k_i$ and that the map
$$\bigoplus_{i=1}^r\Cor^{G}_{G_i}: \bigoplus_{i=1}^r \hat{\operatorname{H}}^{-3}(G_i, \bbZ) \to \hat{\operatorname{H}}^{-3}(G, \bbZ)$$
is surjective.
Then $\mathfrak{F}(L/K/k)=\mathfrak{K}(K/k).$
\end{theorem}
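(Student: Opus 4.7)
The plan is to reduce the equality $\mathfrak{F}(L/K/k)=\mathfrak{K}(K/k)$ to the vanishing of a natural homomorphism $\iota\colon\mathfrak{K}(L/k)\to\mathfrak{K}(K/k)$, and then to kill $\iota$ by combining the hypothesized HNP for the $K_i/k_i$ with the surjectivity of the corestriction maps at the level of Schur multipliers.

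First I would unwind the definitions: by transitivity of the norm, the inclusions $N_{L/k}\bbA_L^*\subseteq N_{K/k}\bbA_K^*$ and $N_{L/k}L^*\subseteq N_{K/k}K^*$ induce a well-defined homomorphism $\iota\colon\mathfrak{K}(L/k)\to\mathfrak{K}(K/k)$ whose image equals $(k^*\cap N_{L/k}\bbA_L^*)N_{K/k}K^*/N_{K/k}K^*=\ker(\mathfrak{K}(K/k)\twoheadrightarrow\mathfrak{F}(L/K/k))$. Thus the conclusion is equivalent to $\iota=0$. For each $i$, applying $N_{k_i/k}$ to representatives produces a commutative diagram
\[
\begin{CD}
\mathfrak{K}(L/k_i) @>\iota_i>> \mathfrak{K}(K_i/k_i)\\
@V N_{k_i/k} VV @VV N_{k_i/k} V\\
\mathfrak{K}(L/k) @>\iota>> \mathfrak{K}(K/k),
\end{CD}
\]
where $\iota_i$ is the analogue of $\iota$ for the tower $L/K_i/k_i$. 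By hypothesis $\mathfrak{K}(K_i/k_i)=0$, so $\iota\circ N_{k_i/k}=0$ for every $i$. It therefore suffices to show that the combined map $\bigoplus_i N_{k_i/k}\colon\bigoplus_i\mathfrak{K}(L/k_i)\to\mathfrak{K}(L/k)$ is surjective.

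For this I would pass to group cohomology. Theorem~\ref{Tate} combined with Lemma~\ref{kernel_res} gives the presentation
\[
\mathfrak{K}(L/k)\cong\hat{\operatorname{H}}^{-3}(G,\bbZ)\Big/\sum_{v\in\Omega_k}\Cor^{G}_{D_v}\hat{\operatorname{H}}^{-3}(D_v,\bbZ),
\]
with the analogous presentation for each $\mathfrak{K}(L/k_i)$, using $G_i$ in place of $G$ and the subgroups $G_i\cap gD_vg^{-1}$ arising from the double coset decomposition $G_i\backslash G/D_v$ in place of the $D_v$ (these being exactly the decomposition subgroups at the places of $k_i$ lying above $v$). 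Under these identifications, the arithmetic map $N_{k_i/k}$ corresponds to $\Cor^{G}_{G_i}$, and it descends to the quotients by transitivity of corestriction together with the triviality of inner conjugation on $\hat{\operatorname{H}}^{-3}(G,\bbZ)$. The hypothesized surjectivity of $\bigoplus_i\Cor^{G}_{G_i}$ then yields the required surjectivity of $\bigoplus_i N_{k_i/k}$.

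The main obstacle I anticipate is the cohomological compatibility in the final step, namely establishing that $N_{k_i/k}$ on knot groups truly corresponds to $\Cor^{G}_{G_i}$ on Schur multipliers under the DP/Tate identification. This rests on the functoriality of a generalized representation group of $G$ (Definition~\ref{gen_rep_gp_defn}) under subgroup inclusions, combined with the standard double coset identification of decomposition subgroups upon extending scalars from $k$ to $k_i$; once this compatibility is in place, the remainder of the argument is formal.
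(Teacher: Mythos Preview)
The paper does not itself prove this statement; it is quoted without proof from Drakokhrust--Platonov \cite{DP}. Your argument is correct and is essentially the original proof given there.

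One small remark: the presentation
\[
\mathfrak{K}(L/k)\cong\hat{\operatorname{H}}^{-3}(G,\bbZ)\Big/\sum_{v}\Cor^{G}_{D_v}\hat{\operatorname{H}}^{-3}(D_v,\bbZ)
\]
follows directly from dualizing Theorem~\ref{Tate}, using that restriction and corestriction are dual under the pairing $\operatorname{H}^3(G,\bbZ)\times\hat{\operatorname{H}}^{-3}(G,\bbZ)\to\bbQ/\bbZ$ for finite $G$; Lemma~\ref{kernel_res} plays no role at this step. The compatibility you flag as the main obstacle---that $N_{k_i/k}$ on knot groups matches $\Cor^{G}_{G_i}$ on Schur multipliers---is indeed the crux, and it follows from the fact that Tate's cup-product isomorphism $\hat{\operatorname{H}}^{q}(G,\bbZ)\xrightarrow{\ \cup\, u_{L/k}\ }\hat{\operatorname{H}}^{q+2}(G,C_L)$ intertwines corestriction on both sides, a consequence of the compatibility $\Res^{G}_{G_i}u_{L/k}=u_{L/k_i}$ of global fundamental classes together with the projection formula for cup products.
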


In order to compute $\mathfrak{F}(L/K/k)$, Drakokhrust and Platonov give some explicit results relating this object to the local and global Galois groups of the tower $L/K/k$. We present their results here in a slightly more general setting. Let $G$ be a finite group, let $H \leq G$, and let $S$ be a set of subgroups of $G$. Consider the following commutative diagram:
\begin{equation}\label{1stob_general}
\xymatrix{H/[H,H]\ar[r]^{\psi_1} & G/[G,G]\\
\bigoplus\limits_{D \in S}\left( \bigoplus\limits_{H x_i D \in H \backslash G / D}{H_i/[H_i, H_i]} \right) \ar[r]^{\ \ \ \ \ \ \ \ \ \ \ \psi_2} \ar[u]^{\varphi_1}&\bigoplus\limits_{D \in S}{D/[D, D]}\ar[u]_{\varphi_2}
}
\end{equation}

\noindent where the $x_i$'s are a set of representatives of the $H$--$D$ double cosets of $G$, the sum over $D$ is a sum over all subgroups in $S$, and $H_i := H \cap x_i D x_i^{-1}$. 
The maps $\psi_1, \varphi_1$ and $\varphi_2$ are induced by the natural inclusions $H\hookrightarrow G$, $H_i\hookrightarrow H$ and $D\hookrightarrow G$, respectively. If $h\in H_i$, then 
\[\psi_2(h[H_i,H_i])=x_i^{-1}hx_i[D,D] \in D/[D,D].\]

\noindent Given a subgroup $D \in S$, we denote by $\psi_2^D$ the restriction of the map $\psi_2$ in diagram \eqref{1stob_general} to the subgroup $\bigoplus\limits_{H x_i D \in H \backslash G / D}{H_i/[H_i, H_i]}$.

\begin{lemma}\label{lem2DP}
In diagram \eqref{1stob_general}, $\varphi_1(\Ker\psi_2^{D}) \subset \varphi_1(\Ker\psi_2^{D'})$ whenever $D\subset D'$. 
\end{lemma}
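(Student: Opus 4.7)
The plan is to prove the containment constructively: given a tuple $(h_i[H_i,H_i])_i \in \Ker\psi_2^D$ (indexed by the $H$-$D$ double coset representatives $x_i$), I would build an explicit tuple in $\Ker\psi_2^{D'}$ with the same image under $\varphi_1$. The key geometric observation is that the inclusion $D\subset D'$ coarsens the double-coset decomposition: each $Hx_iD$ is contained in a unique $H$-$D'$ double coset $Hy_{j(i)}D'$, so I can write $x_i = h_{0,i}\, y_{j(i)}\, d'_i$ for some $h_{0,i}\in H$ and $d'_i\in D'$.

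With this data, I would conjugate each $h_i$ into the ``primed'' picture. Since $x_iDx_i^{-1}\subset x_iD'x_i^{-1} = h_{0,i}\bigl(y_{j(i)}D'y_{j(i)}^{-1}\bigr)h_{0,i}^{-1}$, the element $\tilde h_i := h_{0,i}^{-1} h_i h_{0,i}$ lies in $H'_{j(i)} := H\cap y_{j(i)}D'y_{j(i)}^{-1}$. I would then propose the candidate $(\eta_j[H'_j,H'_j])_j$ with $\eta_j := \prod_{i:\,j(i)=j} \tilde h_i$ and verify the two required properties.

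The $\varphi_1$-compatibility is essentially free because each $h_{0,i}$ lies in $H$, so $\tilde h_i\equiv h_i \pmod{[H,H]}$, giving $\varphi_1\bigl((\eta_j[H'_j,H'_j])_j\bigr) = \prod_i h_i\,[H,H] = \varphi_1\bigl((h_i[H_i,H_i])_i\bigr)$. For the kernel condition, I would substitute $y_{j(i)}^{-1} = d'_i x_i^{-1} h_{0,i}$ to obtain
\[ y_{j(i)}^{-1} \tilde h_i\, y_{j(i)} \;=\; d'_i\,(x_i^{-1} h_i x_i)\,(d'_i)^{-1}. \]
Because $x_i^{-1}h_ix_i\in D\subset D'$, conjugation by $d'_i$ is trivial modulo $[D',D']$, so $\psi_2^{D'}$ of the candidate is congruent to $\prod_i x_i^{-1}h_ix_i\,[D',D']$, which vanishes because by hypothesis this product lies in $[D,D]\subset [D',D']$.

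The main obstacle I anticipate is bookkeeping rather than mathematics: carefully tracking the two parallel indexing systems via the map $i\mapsto j(i)$ and confirming that the construction is independent of the choices of $h_{0,i}$ and $d'_i$. The mechanism underlying everything is simply that $\varphi_1$ kills conjugation by elements of $H$ and the target $D'/[D',D']$ kills conjugation by elements of $D'$, so the change-of-representative factors harmlessly collapse.
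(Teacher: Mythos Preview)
Your proposal is correct and is precisely the standard constructive argument behind \cite[Lemma~2]{DP}, which is what the paper invokes. One minor remark: you need not verify independence of the choices $h_{0,i}$ and $d'_i$, since you only need to produce \emph{some} element of $\Ker\psi_2^{D'}$ with the right $\varphi_1$-image, not a canonical one.
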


\begin{proof}
The proof follows in the same manner as the proof of \cite[Lemma 2]{DP}.
\end{proof}

\begin{lemma}\cite[Lemma 1]{DP}\label{lem1DP} Set $G=\Gal(L/k)$ and $H=\Gal(L/K)$.
Given a place $v$ of $k$, the set of places $w$ of $K$ above $v$ is in one-to-one correspondence with the set of double cosets in the decomposition $G = \bigcup\limits_{i=1}^{r_v} H x_i D_v$. If $w$ corresponds to $H x_{i} D_v$, then the decomposition group $H_w$ of the extension $L/K$ at ${w}$ equals $H \cap x_{i} D_v x_{i}^{-1}$.
\end{lemma}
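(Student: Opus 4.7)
The plan is to derive the lemma from the classical Galois-theoretic description of places above $v$. Fix once and for all a place $\widetilde{w}$ of $L$ lying above $v$, so that by definition the decomposition group at $\widetilde{w}$ in $G=\Gal(L/k)$ is $D_v=\Gal(L_{\widetilde{w}}/k_v)$. Since $G$ acts transitively on the set of places of $L$ above $v$, the orbit--stabiliser theorem yields a $G$-equivariant bijection between this set and the coset space $G/D_v$, sending $gD_v \mapsto g\widetilde{w}$.

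Next, I would identify the places $w$ of $K$ above $v$ with the $H$-orbits on the set of places of $L$ above $v$: two places $g\widetilde{w}$ and $g'\widetilde{w}$ restrict to the same place of $K$ precisely when $g'=hg$ for some $h\in H$, i.e. when $HgD_v = Hg'D_v$. Combining this with the previous step produces the desired bijection
\[
\{w \in \Omega_K : w \mid v\} \;\longleftrightarrow\; H\backslash G / D_v = \{Hx_1D_v, \dots, Hx_{r_v}D_v\},
\]
where the $x_i$ are chosen representatives of the double cosets.

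To identify the decomposition group, let $w$ correspond to $Hx_iD_v$, i.e. $w$ is the restriction to $K$ of the place $x_i\widetilde{w}$ of $L$. The decomposition group of $x_i\widetilde{w}$ in the ambient group $G$ is the conjugate $x_iD_vx_i^{-1}$ (since $\sigma$ fixes $x_i\widetilde{w}$ iff $x_i^{-1}\sigma x_i$ fixes $\widetilde{w}$), and the decomposition group $H_w$ at $w$ in the extension $L/K$ is obtained by restricting to $H=\Gal(L/K)$. Hence
\[
H_w = H \cap x_iD_vx_i^{-1},
\]
as claimed.

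There is no substantive obstacle here; this is a routine orbit--stabiliser argument. The only mild care required is checking that the bijection is independent of the initial choice of $\widetilde{w}$ and of the double-coset representatives $x_i$: changing $\widetilde{w}$ to $g\widetilde{w}$ simply replaces $D_v$ by its conjugate $gD_vg^{-1}$, and the double cosets $H\backslash G/D_v$ (together with the resulting subgroups $H\cap x_iD_vx_i^{-1}$) are transformed accordingly, so the statement is intrinsically well-defined.
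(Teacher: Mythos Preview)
Your argument is correct and is the standard orbit--stabiliser proof of this classical fact. Note that the paper does not give its own proof of this lemma; it simply cites \cite[Lemma~1]{DP}, so there is nothing to compare against beyond observing that your argument is precisely the expected one.
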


Set $G=\Gal(L/k)$, $H=\Gal(L/K)$ and $S=\{ D_v \mid v \in \Omega_k \}$. Lemma~\ref{lem1DP} shows that, with these choices, diagram \eqref{1stob_general} takes the form 
\begin{equation}\label{1stob}
\xymatrix{H/[H,H]\ar[r]^{\psi_1} & G/[G,G]\\
\bigoplus\limits_{v \in \Omega_k}\left(\bigoplus\limits_{w | v}{H_w/[H_w, H_w]}\right)\ar[r]^{\ \ \ \ \psi_2} \ar[u]^{\varphi_1}&\bigoplus\limits_{v \in \Omega_k}{D_v/[D_v, D_v]}\ar[u]_{\varphi_2}
}
\end{equation}

\noindent where the sum over $w\mid v$ is a sum over all places $w$ of $K$ above $v$ and $H_w$ is the decomposition group of $L/K$ at $w$.

\begin{theorem}{\cite[Theorem 1]{DP}}\label{thm1DP}
With the notation of diagram \eqref{1stob}, there is a canonical isomorphism
$$\mathfrak{F}(L/K/k) = \Ker\psi_1/\varphi_1(\Ker\psi_2).$$
\end{theorem}

We write $\psi_2^{nr}$ for the restriction of the map $\psi_2$ 
to the subgroup \[\bigoplus_{v \textrm{ unramified in } L/k}\Bigl(\bigoplus_{w\mid v}{H_w/[H_w, H_w]}\Bigr)\] 
\noindent and define $\psi_2^{r}$ similarly using the ramified places. 

\begin{lemma}\label{lem:cyclic=unram}
Set $G=\Gal(L/k)$ and $H=\Gal(L/K)$.
Let $C$ be the set of all cyclic subgroups of $G$ and let $\varphi_1^C$ and $\psi_2^C$ denote the relevant maps in diagram~\eqref{1stob_general} with $S=C$. Then 
\[\varphi_1(\Ker\psi_2^{nr})=\varphi_1^C(\Ker\psi_2^C)\]
where the maps in the expression on the left are the ones in diagram~\eqref{1stob}.
\end{lemma}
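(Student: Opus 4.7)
The plan is to prove both inclusions of the asserted equality by combining the Chebotarev density theorem with a conjugacy-invariance property for the groups $\varphi_1(\Ker\psi_2^D)$ appearing in diagram~\eqref{1stob_general}.

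First, I would show that $\varphi_1(\Ker\psi_2^D)$ depends only on the $G$-conjugacy class of $D$. Given $g \in G$, set $D' = gDg^{-1}$. The assignment $HxD' \mapsto H(xg)D$ defines a bijection $H \backslash G / D' \to H \backslash G / D$ under which the subgroup $H \cap xD'x^{-1}$ is identified with $H \cap (xg)D(xg)^{-1}$. The map $\varphi_1$ is compatible with this identification, being induced by the inclusion into $H$, and $\psi_2$ is compatible modulo the conjugation isomorphism $D/[D,D] \xrightarrow{\sim} D'/[D',D']$, which sends kernels to kernels. Hence the $\varphi_1$-images of $\Ker\psi_2^D$ and $\Ker\psi_2^{D'}$ coincide.

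For the $(\subseteq)$ inclusion, every unramified place $v$ has cyclic decomposition group $D_v$, and by Lemma~\ref{lem1DP} the contribution of $v$ to diagram~\eqref{1stob} matches the contribution of the cyclic subgroup $D_v$ to diagram~\eqref{1stob_general} taken with $S=C$. Since $\Ker\psi_2^{nr}$ decomposes as the direct sum of the individual $\Ker\psi_2^v$, we obtain $\varphi_1(\Ker\psi_2^{nr}) = \sum_v \varphi_1(\Ker\psi_2^{D_v}) \subseteq \varphi_1^C(\Ker\psi_2^C)$. For $(\supseteq)$, the Chebotarev density theorem ensures that every cyclic subgroup $C' \leq G$ is conjugate to $D_v$ for some unramified $v$; by the conjugacy-invariance established above, $\varphi_1(\Ker\psi_2^{C'}) = \varphi_1(\Ker\psi_2^v) \subseteq \varphi_1(\Ker\psi_2^{nr})$, and summing over all cyclic $C'$ gives the reverse containment. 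The main obstacle is the bookkeeping ensuring the double-coset bijection genuinely preserves the $\varphi_1$-images of kernels under conjugation; once that verification is made carefully, Chebotarev closes the argument.
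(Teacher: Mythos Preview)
Your argument is correct and follows essentially the same strategy as the paper's one-line proof: Chebotarev ensures every cyclic subgroup of $G$ arises (up to conjugacy) as a decomposition group at an unramified place, and a structural property of diagram~\eqref{1stob_general} closes both inclusions. The only minor difference is the auxiliary fact invoked: you establish directly that $\varphi_1(\Ker\psi_2^{D})$ depends only on the $G$-conjugacy class of $D$, whereas the paper cites Lemma~\ref{lem2DP} (monotonicity under inclusion $D\subset D'$); your conjugacy-invariance statement is in fact what is needed here, and your double-coset bookkeeping is accurate.
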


\begin{proof}
This follows from the Chebotarev density theorem and Lemma~\ref{lem2DP}.
\end{proof}

\begin{definition}\label{def:focal}
Let $H$ be a subgroup of a finite group $G$. The \emph{focal subgroup of $H$ in $G$} is 
\begin{eqnarray*}
\Phi^G(H)&=&\langle h_1^{-1}h_2 \mid h_1,h_2 \in H \text{ and } h_2 \text{ is } G \text{-conjugate to } h_1 \rangle\\
&=&\langle [h,x] \mid h\in H\cap xHx^{-1}, x\in  G\rangle
 \trianglelefteq H.
\end{eqnarray*}
\end{definition}

\begin{theorem}{\cite[Theorem 2]{DP}}\label{thm2DP}
In the notation of diagram \eqref{1stob}, we have
\[\varphi_1(\Ker\psi_2^{nr})=\Phi^G(H)/[H,H].\]
\end{theorem}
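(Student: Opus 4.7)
The plan is to prove both inclusions after reducing to cyclic subgroups. By Lemma~\ref{lem:cyclic=unram}, $\varphi_1(\Ker\psi_2^{nr}) = \varphi_1^C(\Ker\psi_2^C) = \sum_{D \in C} \varphi_1(\Ker\psi_2^D)$ as subgroups of $H/[H,H]$, where $C$ denotes the set of cyclic subgroups of $G$. So it suffices to establish: (i) $\varphi_1(\Ker\psi_2^D) \subseteq \Phi^G(H)/[H,H]$ for every cyclic $D \leq G$; and (ii) for every generator $h_2 h_1^{-1}$ of $\Phi^G(H)$ (with $h_1, h_2 \in H$ and $h_2 = x h_1 x^{-1}$ for some $x \in G$), one has $h_2 h_1^{-1}[H,H] \in \varphi_1(\Ker \psi_2^D)$ for some cyclic $D$.

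For (ii), I take $D = \langle h_1 \rangle \subseteq H \subseteq G$ and choose double coset representatives $x_1 = 1$ (so $H_1 = D$) and $x_j = x$ (so $h_2 \in H_j = H \cap x D x^{-1}$). Let $\alpha$ have $-h_1$ in the first summand, $h_2$ in the $j$-th summand, and zero elsewhere. A direct calculation gives $\psi_2^D(\alpha) = -h_1 + x^{-1} h_2 x = 0 \in D$ and $\varphi_1(\alpha) = h_2 h_1^{-1}[H,H]$. The case $x \in H$ is immediate as $h_2 h_1^{-1}$ is then already a commutator in $H$.

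For (i), fix cyclic $D = \langle g \rangle$ of order $n$. Each $H_i = H \cap x_i D x_i^{-1}$ is cyclic, generated by $\eta_i := x_i g^{f_i} x_i^{-1}$, where $f_i = n/|H_i|$ is the smallest positive integer such that $x_i g^{f_i} x_i^{-1} \in H$. An element $\alpha \in \Ker \psi_2^D$ then has the form $(\eta_1^{m_1}, \ldots, \eta_r^{m_r})$ with $\sum_i f_i m_i \equiv 0 \pmod n$, and the goal is to show $\prod_i \eta_i^{m_i} \in \Phi^G(H)$. Writing $\bar\eta_i$ for the image of $\eta_i$ in $H/\Phi^G(H)$, the key identity is
\[f_j' \bar\eta_i = f_i' \bar\eta_j \quad \text{for all } i, j,\]
where $f := \gcd(f_1,\ldots,f_r)$ and $f_i' := f_i/f$. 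This holds because both $\eta_i^{f_j'}$ and $\eta_j^{f_i'}$ are $G$-conjugates of $g^{f f_i' f_j'}$ (via $x_i$ and $x_j$ respectively) lying in $H$, hence $G$-conjugate in $H$ and thus equal modulo $\Phi^G(H)$. Since $\gcd(f_1',\ldots,f_r') = 1$, Bezout supplies integers $c_i$ with $\sum_i c_i f_i' = 1$, and $\xi := \sum_i c_i \bar\eta_i \in H/\Phi^G(H)$ satisfies $\bar\eta_j = f_j' \xi$ for every $j$. Consequently $\sum_i m_i \bar\eta_i = \bigl(\sum_i m_i f_i'\bigr)\xi$, and since $\sum m_i f_i' \equiv 0 \pmod{n/f}$ while $(n/f)\xi = \sum_i c_i f_i'(n/f_i)\bar\eta_i = 0$ (using that $\eta_i$ has order dividing $n/f_i$), one concludes $\sum_i m_i \bar\eta_i = 0$ in $H/\Phi^G(H)$, as required.

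The main technical obstacle is step (i): parlaying the single cyclic-group constraint $\sum f_i m_i \equiv 0 \pmod n$ into a vanishing statement in $H/\Phi^G(H)$. A naive use of $G$-conjugacy only gives the weaker relation $f_j \bar\eta_i = f_i \bar\eta_j$, which shows merely that $f \cdot \varphi_1(\alpha) = 0$ in $H/\Phi^G(H)$ and leaves $f$-torsion unaccounted for. Sharpening to the primitive form $f_j' \bar\eta_i = f_i' \bar\eta_j$ and then using Bezout to synthesize a common auxiliary element $\xi$ whose annihilator contains $n/f$ is precisely what closes this gap; verifying that $(n/f)\xi = 0$ is the crux of the computation.
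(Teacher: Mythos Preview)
The paper does not supply a proof of this statement; it is quoted verbatim from \cite[Theorem~2]{DP}. Your argument is correct and self-contained: the reduction to cyclic $D$ via Lemma~\ref{lem:cyclic=unram} is exactly the intended move, and both inclusions are handled cleanly. In particular, the device in part~(i) of passing to the primitive ratios $f_i'=f_i/f$ and building the auxiliary element $\xi$ via B\'ezout is the right way to upgrade the naive relation $f_j\bar\eta_i=f_i\bar\eta_j$ (which only kills $f$-multiples) to the sharp one $f_j'\bar\eta_i=f_i'\bar\eta_j$; the verification that $(n/f)\xi=0$ then closes the argument, since $(n/f)=f_i'(n/f_i)$ and $\eta_i$ has order $n/f_i$. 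One small point worth making explicit in~(ii): the image $\varphi_1(\Ker\psi_2^D)$ is independent of the choice of double-coset representatives (changing $x_i$ to $hx_id$ replaces $H_i$ by $hH_ih^{-1}$, which is invisible in $H/[H,H]$), so you are free to take $x_1=1$ and $x_j=x$ as you do.
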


Theorem \ref{thm2DP} is very useful -- quite often one can show that $\Phi^G(H)=H\cap [G,G]$ and hence the first obstruction $\mathfrak{F}(L/K/k)$ is trivial. In fact, since $[N_G(H),H]\subset \Phi^G(H)$, if one can show that $[N_G(H),H]=H\cap[G,G]$, then $\mathfrak{F}(L/K/k)=1$. This criterion generalizes \cite[Theorem 3]{Gurak}.

\begin{remark}\label{1obs_alg}
The group $\Ker \psi_1 / \varphi_1(\Ker \psi_2)$ featured in Theorem~\ref{thm1DP} can be computed in finite time. Indeed, $\Ker \psi_1$ is given in terms of the relevant Galois groups, and by \cite[p.~307]{DP} we have
\begin{equation}\label{phi1_prod}
\varphi_1(\Ker\psi_2)=\varphi_1(\Ker\psi_2^{nr})\varphi_1(\Ker\psi_2^{r}).
\end{equation}
Hence, Theorem~\ref{thm2DP} and the fact that only finitely many places of $k$ ramify in $L/k$ show that $\varphi_1(\Ker\psi_2)$ can be obtained by a finite computation. We combined these facts to assemble a function \code{1obs(G,H,l)} in GAP \cite{gap} that, given the groups $G=\Gal(L/k)$, $H=\Gal(L/K)$ and the list $l$ 
of 
decomposition groups $D_v$ at the ramified places $v$, returns the group $\Ker \psi_1 / \varphi_1(\Ker \psi_2)$ isomorphic to the first obstruction $\mathfrak{F}(L/K/k)$. The code for this function is available at \cite{macedo_code}.
\end{remark}
Our next task is to prove Theorem~\ref{thm:unramified_description_intro}, which gives a purely group-theoretic description of $\mathfrak{F}_{nr}(L/K/k)$. First, recall the definition of the group $\mathfrak{F}(G,H)$:

\begin{definition}\label{def:unramified_1st}
Let $G$ be a finite group and let $H \leq G$. We define the group $\mathfrak{F}(G,H)$ as \[\mathfrak{F}(G,H)=(H \cap [G,G])  / \Phi^G(H).\] 
\end{definition}

Returning to the situation of a tower of number fields $L/K/k$ with $L/k$ Galois, $G=\Gal(L/k)$ and $H=\Gal(L/K)$ and letting $\psi_1, \varphi_1^C$ and $\psi_2^C$ denote the relevant maps in diagram~\eqref{1stob_general} with $S=C$, the set of all cyclic subgroups of $G$, we have 
\begin{equation}\label{f(g,h)_isom}
    \mathfrak{F}(G,H)= \Ker\psi_1/\varphi_1^C(\Ker\psi_2^{C}).
\end{equation}
We now prove the following strengthening of Theorem~\ref{thm:unramified_description_intro}:

\begin{theorem}\label{thm:unramified_description}
There is a canonical isomorphism
$\mathfrak{F}_{nr}(L/K/k)=\mathfrak{F}(G,H)$ under which the natural surjection $\mathfrak{F}_{nr}(L/K/k)\twoheadrightarrow \mathfrak{F}(L/K/k)$ coincides with the natural surjection $\mathfrak{F}(G,H)\twoheadrightarrow \mathfrak{F}(L/K/k)$ induced by Theorem~\ref{thm1DP}.
\end{theorem}
\begin{proof}
The norm map $N_{L/K}$ induces a commutative diagram of $k$-tori with exact lines:
\begin{equation}\label{diag:tori}
\begin{tikzcd}
1 \arrow[r] & R^1_{L/k} \bbG_m \arrow[r] \arrow[d, "N_{L/K}", no head] \arrow[d] & R_{L/k} \bbG_m \arrow[r] \arrow[d, "N_{L/K}"] &  \bbG_m \arrow[r] \arrow[d, phantom] \arrow[d, "="] & 1 \\
1 \arrow[r] & R^1_{K/k} \bbG_m \arrow[r]                                         & R_{K/k} \bbG_m \arrow[r]                      &  \bbG_m \arrow[r]                                   & 1
\end{tikzcd}
\end{equation}

\noindent Taking character groups in \eqref{diag:tori} and then taking $G$-cohomology gives the following commutative diagram of abelian groups with exact lines:
\begin{equation}\label{diag:tori_gps}
\begin{tikzcd}
\operatorname{H}^2(G,\Z) \arrow[r, "\theta_1"] \arrow[d, "="]     & \operatorname{H}^2(G,\Z[G/H]) \arrow[r, "\theta_2"] \arrow[d] & \operatorname{H}^2(G,\widehat{T}) \arrow[r, "\theta_3"] \arrow[d, "f_{L/K}^*"] & \operatorname{H}^3(G,\Z) \arrow[d, "="] \\
\operatorname{H}^2(G,\Z)  \arrow[r] & \operatorname{H}^2(G,\Z[G])=0 \arrow[r]                                   & \operatorname{H}^2(G,\widehat{T_0})  \arrow[r]                                    &\operatorname{H}^3(G,\Z)             
\end{tikzcd}
\end{equation}

\noindent Note that, by Theorem~\ref{thmcs}, the group $\mathfrak{F}_{nr}(L/K/k)=\Coker\left(f_{L/K}:\operatorname{H}^1(k,\Pic \overline{X_0})^{\sim} \to \operatorname{H}^1(k,\Pic \overline{X})^{\sim} \right)$ is dual to $\Ker\left(f_{L/K}^*|_{\Sha^2_{\omega}(G,\widehat{T})} : \Sha^2_{\omega}(G,\widehat{T}) \to \Sha^2_{\omega}(G,\widehat{T_0})\right)$. As the first line of diagram~\eqref{diag:tori_gps} is exact, we have $$\Ker\left(f_{L/K}^*|_{\Sha^2_{\omega}(G,\widehat{T})}\right) = \im\theta_2 \cap \Sha^2_{\omega}(G,\widehat{T}).$$
Furthermore, taking character groups in the second line of \eqref{diag:tori} and then taking both $G$-cohomology and $\langle g\rangle$-cohomology, we obtain the following commutative diagram with exact lines
\begin{equation}\label{diag:tori_gps2}
    \begin{tikzcd}
\operatorname{H}^2(G,\Z) \arrow[r, "\theta_1"] \arrow[d]     & \operatorname{H}^2(G,\Z[G/H]) \arrow[r, "\theta_2"] \arrow[d, "\theta_4"] & \operatorname{H}^2(G,\widehat{T}) \arrow[d] \\
\prod\limits_{g \in G}\operatorname{H}^2(\langle g \rangle,\Z) \arrow[r, "\theta_5"]     &\prod\limits_{g \in G} \operatorname{H}^2(\langle g \rangle,\Z[G/H]) \arrow[r]  & \prod\limits_{g \in G} \operatorname{H}^2(\langle g \rangle,\widehat{T})      
\end{tikzcd}
\end{equation}

\noindent and a straightforward diagram chase shows that $\theta_2$ induces an isomorphism $$ \theta_4^{-1}(\im\theta_5)/\im\theta_1\cong \im\theta_2 \cap \Sha^2_{\omega}(G,\widehat{T}).$$

In \cite[Theorem 6.12]{Platonov} and pages leading to it, the authors show that the first square in diagram \eqref{diag:tori_gps2} is dual to diagram \eqref{1stob_general} with $S=C=\{\textrm{cyclic subgroups of } G\}$, reproduced below:
\begin{equation}\label{1stob2}
\xymatrix{H/[H,H]\ar[r]^{\psi_1} & G/[G,G]\\
\bigoplus\limits_{g \in G}\bigl(\bigoplus\limits_{H x_i \langle g \rangle \in H \backslash G / \langle g \rangle }{\langle x_i g x_i^{-1}\rangle \cap H}\bigr)\ar[r]^{\ \  \ \ \ \ \ \ \ \ \ \ \ \ \ \ \psi_2^C} \ar[u]^{\varphi_1^C}&\bigoplus\limits_{g \in G}{\langle g \rangle}\ar[u]_{}
}
\end{equation}
In particular, $\theta_4^{-1}(\im\theta_5)/\im\theta_1$ is dual to $\Ker \psi_1 / \varphi_1^C (\Ker \psi^C_2)$ and the existence of a canonical isomorphism
$\mathfrak{F}_{nr}(L/K/k)=\mathfrak{F}(G,H)$ follows from \eqref{f(g,h)_isom}. Theorem~\ref{thm1DP} can be proved in an analogous way by considering a version of diagram \eqref{diag:tori_gps2} with all decomposition groups in place of all cyclic subgroups of $G$ and recalling from Theorem~\ref{Tate} that $\Sha(T)$ is dual to $\Sha^2(G,\widehat{T})$. Proposition~\ref{prop:dualVosk} now yields the desired compatibility.
\end{proof}

\begin{proof}[Proof of Corollary~\ref{an_sn_p_part_intro}]
This is a direct consequence of diagram~\eqref{eq:first_obs} and Theorems~\ref{thmcs} and ~\ref{thm:unramified_description}.\end{proof}

\begin{corollary}
If $H$ is a Hall subgroup of $G$, then $\mathfrak{F}_{nr}(L/K/k)=\mathfrak{F}(L/K/k)=1$.
\end{corollary}

\begin{proof}
The focal subgroup theorem \cite{Higman} shows that for a Hall subgroup $H$ of $G$, we have $\mathfrak{F}(G,H)=1$. The result therefore follows from Theorem~\ref{thm:unramified_description_intro} and the surjection $\mathfrak{F}_{nr}(L/K/k) \twoheadrightarrow \mathfrak{F}(L/K/k)$.
\end{proof}

\section{Generalized representation groups}\label{sec:gen_rep}

Theorem~\ref{mid_gp_generalized} below gives an explicit description of the birational invariant $\operatorname{H}^1(k,\Pic \overline{X})$ in terms of generalized representation groups, which we now define:

\begin{definition}\label{gen_rep_gp_defn}
Let $G$ be a finite group. A finite group $\overline{G}$ is called a \emph{generalized representation group} of $G$ if there exists a central extension 
\begin{equation}\label{eq:genrep}
    1 \to K \to \overline{G} \xrightarrow[]{\lambda} G \to 1,
\end{equation}
\noindent such that the transgression map 
$\operatorname{Tr}_G : \hat{\operatorname{H}}^{1}(K,\Q/\Z) \to \hat{\operatorname{H}}^{2}(G,\Q/\Z)$ in the 
inflation-restriction exact sequence is surjective. We call $K$ the base normal subgroup of $\overline{G}$.
\end{definition}

\begin{remark}
 Surjectivity of the transgression map $\operatorname{Tr}_G$ in Definition~\ref{gen_rep_gp_defn} is equivalent to injectivity of the dual map $\operatorname{Tr}_G^*$ in the exact sequence $\hat{\operatorname{H}}^{-3}(G,\Z) \xrightarrow{\operatorname{Tr}_G^*} \hat{\operatorname{H}}^{-2}(K,\Z)\rightarrow \hat{\operatorname{H}}^{-2}(\overline{G},\Z)$, where the second map is induced by the inclusion $K\subset \overline{G}$. Hence, a central extension as in \eqref{eq:genrep} gives a generalized representation group if and only if $\operatorname{Tr}_G^*$ gives an isomorphism $\hat{\operatorname{H}}^{-3}(G,\Z) \cong K \cap [\overline{G},\overline{G}].$ 
\end{remark}

Let $L/k$ be a Galois extension of number fields with Galois group $G$ and let $\overline{G}$ be a generalized representation group of $G$ with base normal subgroup $K$. Then the middle group in Voskresenski\u{\i}'s exact sequence \eqref{eq:Voskexact} for $R^1_{L/k}\bbG_m$ is $\hat{\operatorname{H}}^{-3}(G,\Z)\cong K\cap [\overline{G},\overline{G}]=\mathfrak{F}(\overline{G},K)$. Theorem~\ref{mid_gp_generalized} below shows that this is a special case of a more general phenomenon.

\begin{theorem}[Drakokhrust]\label{mid_gp_generalized}Let $L/K/k$ be a tower of number fields with $L/k$ Galois.
Let $X/k$ be a smooth compactification of the norm one torus $R^1_{K/k}\mathbb{G}_m$. Let $G=\Gal(L/k)$ and $H=\Gal(L/K)$. Let $\overline{G}$ be a generalized representation group of $G$ with projection map $\lambda$ and for any subgroup $B \leq G$ let $\overline{B}=\lambda^{-1}(B)$. Then there is a canonical isomorphism $$\operatorname{H}^1(k,\Pic \overline{X})^{\sim} = \mathfrak{F}(\overline{G},\overline{H}).$$
\end{theorem}

\begin{proof}
For any $v \in \Omega_{k}$, define
\[S_v=\begin{cases}
\lambda^{-1}(D_v)\textrm{ if $v$ is ramified in $L/k$;}\\
 \textrm{a cyclic subgroup of $\lambda^{-1}(D_v)$ with $\lambda(S_v)=D_v$ otherwise.}
\end{cases}\]

\noindent Consider the version of diagram \eqref{1stob_general} with respect to the groups $\overline{G}$, $\overline{H}$ and $S=\{S_v \mid v \in \Omega_k\}$. In this setting, Drakokhrust shows in \cite[Theorem 2]{Drak} that $$\operatorname{H}^1(k,\Pic \overline{X})^{\sim} = \Ker \psi_1/ \varphi_1(\Ker \psi^{nr}_2),$$
where $\psi^{nr}_2$ denotes the restriction of $\psi_2$ to the subgroup \[\bigoplus_{v \textrm{ unramified in } L/k}\Bigl(\bigoplus_{i=1}^{r_v}{\overline{H}\cap x_i S_vx_i^{-1}}\Bigr)\] 
and the $x_i$'s are a set of representatives for the double coset decomposition $\overline{G}=\bigcup\limits_{i=1}^{r_v}\overline{H}x_i S_v$.

By the Chebotarev density theorem we can choose the subgroups ${S}_v$ for $v$ unramified in such  a way that every cyclic subgroup of $\overline{G}$ is in $S$. For this choice, we obtain
$$\Ker \psi_1/ \varphi_1(\Ker \psi^{nr}_2) = \mathfrak{F}(\overline{G},\overline{H}).$$

\noindent Indeed, we clearly have $\Ker \psi_1 = (\overline{H} \cap [\overline{G},\overline{G}])/[\overline{H},\overline{H}]$ and the equality $\varphi_1(\Ker \psi^{nr}_2) = \Phi^{\overline{G}}(\overline{H}) / [\overline{H},\overline{H}]$ follows from Lemma~\ref{lem:cyclic=unram} and an argument similar to the proof of \cite[Theorem 2]{DP}.\end{proof}

The following lemma will be used alongside Theorem~\ref{mid_gp_generalized} in the proof of Theorem~\ref{main0}, see Proposition~\ref{1obs_even} below.
\begin{lemma}\label{summary_objects} We have
 $\mathfrak{F}(\overline{G},\overline{H})\cong \mathfrak{F}(G,H)$ if and only if $\Ker \lambda \cap [\overline{G},\overline{G}] \subset \Phi^{\overline{G}}(\overline{H})$, where the notation is as in Theorem~\ref{mid_gp_generalized}.
\end{lemma}

\begin{proof}
Easy exercise.
\end{proof}

The next lemma enables us to employ generalized representation groups to calculate knot groups using the isomorphism \eqref{eq:tate_knot} of Theorem~\ref{Tate} (via duality) or Theorem~\ref{thm3DP}.

\begin{lemma}{\cite[Lemma 4]{DP}}
\label{kernel_res}
Let $G$ be a finite group, $H$ a subgroup of $G$ and $\overline{G}$ a generalized representation group of $G$ with projection map $\lambda$ and base normal subgroup $K$. Then \[\im \left(\Cor : \hat{\operatorname{H}}^{-3}(H,\Z) \to \hat{\operatorname{H}}^{-3}(G,\Z) \right) \cong K \cap [\lambda^{-1}(H),\lambda^{-1}(H)].\]
\end{lemma}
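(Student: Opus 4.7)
The plan is to reinterpret the corestriction as the transgression appearing in the five-term exact sequence of a central extension, and to exploit the defining property of a generalized representation group.

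Under the canonical identification $\hat{\operatorname{H}}^{-3}(G,\bbZ) = \operatorname{H}_2(G,\bbZ)$, recall that any central extension $1\to A \to E \to Q \to 1$ yields a five-term exact sequence
\[\operatorname{H}_2(E,\bbZ) \longrightarrow \operatorname{H}_2(Q,\bbZ) \xrightarrow{d} A \longrightarrow E^{\mathrm{ab}} \longrightarrow Q^{\mathrm{ab}} \longrightarrow 0,\]
in which the image of the transgression $d$ equals $A\cap [E,E]$. Applied to $1\to K \to \overline{G}\xrightarrow{\lambda} G \to 1$, Definition~\ref{gen_rep_gp_defn} asserts that $K \cap [\overline{G},\overline{G}]$ and $\operatorname{H}_2(G,\bbZ) = \hat{\operatorname{H}}^{-3}(G,\bbZ)$ are abstractly isomorphic finite groups. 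Combined with the surjectivity of $d_G$ onto $K \cap [\overline{G},\overline{G}]$ provided by the five-term sequence, this forces $d_G : \operatorname{H}_2(G,\bbZ) \xrightarrow{\sim} K \cap [\overline{G},\overline{G}]$ to be an isomorphism. Applying the same machinery to the pulled-back extension $1 \to K \to \lambda^{-1}(H)\to H \to 1$ yields a transgression $d_H: \operatorname{H}_2(H,\bbZ) \to K$ whose image is $K\cap [\lambda^{-1}(H),\lambda^{-1}(H)]$.

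The key step is to invoke naturality of the five-term exact sequence with respect to the morphism of central extensions $(K \hookrightarrow \lambda^{-1}(H) \to H) \hookrightarrow (K \hookrightarrow \overline{G} \to G)$, which is the identity on the common kernel $K$. This produces the commutativity relation $d_G \circ i_* = d_H$, where $i_*: \operatorname{H}_2(H,\bbZ) \to \operatorname{H}_2(G,\bbZ)$ is the map induced by $H\hookrightarrow G$ on group homology. Under the identification $\hat{\operatorname{H}}^{-3} = \operatorname{H}_2$, the map $i_*$ is precisely the Tate corestriction $\operatorname{Cor}$. Combining with the fact that $d_G$ is an isomorphism onto $K \cap [\overline{G},\overline{G}]$, one obtains
\[d_G\bigl(\operatorname{im}(\operatorname{Cor})\bigr) \;=\; \operatorname{im}(d_H) \;=\; K \cap [\lambda^{-1}(H),\lambda^{-1}(H)],\]
and composing with $d_G^{-1}$ yields the claimed isomorphism $\operatorname{im}(\operatorname{Cor}) \cong K \cap [\lambda^{-1}(H),\lambda^{-1}(H)]$.

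The main obstacle is essentially bookkeeping with conventions: one must verify that the inclusion-induced map $i_*$ on group homology agrees with the Tate corestriction under $\hat{\operatorname{H}}^{-3} = \operatorname{H}_2$, and that the transgression in the five-term sequence is natural for morphisms of central extensions. Both facts are standard but are the places where a direction error would be easiest to make; once they are in hand, the proof is essentially formal, relying only on unwinding the definition of a generalized representation group.
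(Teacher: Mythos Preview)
Your argument is correct. The paper does not give its own proof of this lemma; it simply cites \cite[Lemma~4]{DP}, so there is no in-paper argument to compare against. That said, your route via the homological five-term sequence of a central extension, together with naturality under the inclusion of extensions $(K\hookrightarrow\lambda^{-1}(H)\to H)\hookrightarrow(K\hookrightarrow\overline{G}\to G)$, is exactly the standard mechanism behind this result and is essentially how the argument runs in Drakokhrust--Platonov. The two points you flag as needing care are indeed the only delicate ones: that the inclusion-induced map on $\operatorname{H}_2$ coincides with Tate corestriction under $\hat{\operatorname{H}}^{-3}=\operatorname{H}_2$, and that the transgression is natural for morphisms of central extensions which are the identity on the kernel. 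Both are standard (see, e.g., Brown, \emph{Cohomology of Groups}, III.9--10 and VII), and once granted the rest is, as you say, formal.
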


It is well known that every finite group has a generalized representation group 
(see \cite[Theorem~2.1.4]{kar}). The following result of Schur, which gives presentations of generalized representation groups of $A_n$ and $S_n$, will be used in Section~\ref{sect_applications} when investigating the Hasse norm principle for $A_n$ and $S_n$ extensions.
\begin{proposition}\label{pres}
Let $n \geq 4$ and let $U$ be the group with generators $z,\overline{t_1},\dots, \overline{t_{n-1}}$ and relations
\begin{enumerate}[label=(\roman{*})]
    \item $z^2=1$;
    \item $z \overline{t_i} = \overline{t_i} z $, \textnormal{for $1 \leq i \leq n-1$};
    \item $\overline{t_i}^2=z$, \textnormal{for $1 \leq i \leq n-1$};
    \item $(\overline{t_i}. \overline{t_{i+1}})^3 =z $, \textnormal{for $1 \leq i \leq n-2$};
    \item $\overline{t_i}. \overline{t_j} = z \overline{t_j} .\overline{t_i}$, \textnormal{for $|i-j| \geq 2$ and $1 \leq i,j \leq n-1$}.
\end{enumerate}

Then $U$ is a generalized representation group of $S_n$ with base normal subgroup $K = \langle z \rangle $. Moreover, if $t_i$ denotes the transposition $(i \hspace{5pt} i+1)$ in $S_n$, then the map
\begin{align*}
  \lambda \colon U &\longrightarrow S_n \\
  z &\longmapsto 1 \\
 \overline{t_i} &\longmapsto  t_i
\end{align*}

\noindent is surjective and has kernel $K$. 
Additionally, if $n \neq 6,7$, then a generalized representation group of $A_n$ is given by $V=\lambda^{-1}(A_n)=\langle z, \overline{t_1}. \overline{t_2}, \overline{t_1} .\overline{t_3}, \dots, \overline{t_1} .\overline{t_{n-1}} \rangle \leq U$.

\end{proposition}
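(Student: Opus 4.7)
This is a classical theorem of Schur; the strategy is as follows. First I would verify that the assignment $z\mapsto 1$, $\overline{t_i}\mapsto t_i$ respects all five defining relations: relations (i)--(iii) become trivial once $z\mapsto 1$, relation (iv) becomes $(t_it_{i+1})^3=1$ (valid because $t_it_{i+1}$ is a 3-cycle), and (v) becomes $t_it_j=t_jt_i$ for $|i-j|\geq 2$ (valid because disjoint transpositions commute). Hence $\lambda$ is a well-defined group homomorphism, and it is surjective since the $t_i$ generate $S_n$. By relation (ii), $z$ is central in $U$, so $K=\langle z\rangle$ is a central subgroup of order at most $2$, and it is contained in $\ker(\lambda)$.

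Next I would show $\ker(\lambda)=K$. Since $U/K$ is generated by the images of the $\overline{t_i}$, and modulo $K$ the relations (i)--(v) degenerate to the Moore/Coxeter presentation of $S_n$ (involutions, braid relations of length three on adjacent generators, commutation on non-adjacent ones), there is a surjection $S_n\twoheadrightarrow U/K$. Combined with $\lambda$, this forces $U/K\cong S_n$ and therefore $|U|\leq 2\cdot n!$. The hard step, and the main obstacle, is to show equality (equivalently, that $z\neq 1$ in $U$, so the central extension does not collapse). This requires exhibiting a genuine projective representation of $S_n$ of the correct order that does not lift to $S_n$; the standard route is Schur's construction via the Clifford algebra, embedding $S_n$ into an orthogonal group and pulling back to the Pin double cover. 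Once $|K|=2$ is established, the count $|U|=2\cdot n!$ forces $\ker(\lambda)=K$.

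For the Schur-multiplier statement, I would invoke the classical computation $\hat{\operatorname{H}}^{-3}(S_n,\bbZ)\cong C_2$ for $n\geq 4$. To identify $K\cap[U,U]$ with this, it suffices to show $z\in[U,U]$: by relation (iv), $z=(\overline{t_1}\overline{t_2})^3$, and using (iii)--(iv) one rewrites this as a product of commutators in $U$ (for instance, $(\overline{t_1}\overline{t_2})^3=z$ together with $\overline{t_1}^2=\overline{t_2}^2=z$ gives $(\overline{t_1}\overline{t_2})^6=1$, and a short manipulation expresses $z$ as a commutator of $\overline{t_1}\overline{t_2}$ with a suitable conjugate). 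Combined with the inclusion $K\cap[U,U]\subseteq K$, equality follows, so $U$ is a generalised representation group of $S_n$ in the sense of Definition~\ref{gen_rep_gp_defn}.

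For the $A_n$ statement, I would set $V=\lambda^{-1}(A_n)$ and note that $V$ is automatically a central extension of $A_n$ by $K$, generated by the elements $\overline{t_1}\overline{t_j}$ (products of two transpositions lifting the standard generators of $A_n$). For $n\neq 6,7$ (and $n\geq 4$), Schur's computation gives $\hat{\operatorname{H}}^{-3}(A_n,\bbZ)\cong C_2$, so it only remains to check $z\in[V,V]$. This follows from the same identity $z=(\overline{t_1}\overline{t_2})^3$, since $\overline{t_1}\overline{t_2}\in V$, combined with a commutator rewriting as above. Hence $V$ is a generalised representation group of $A_n$, completing the proof.
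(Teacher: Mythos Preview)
The paper does not give a proof of this proposition; it simply cites Schur's original paper and \cite[Chapter~2]{HH92} for the $S_n$ statement and \cite[\S2]{macedo} for the $A_n$ statement. Your sketch follows the classical line of argument found in those references, so there is no real discrepancy of approach to discuss.

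Two small comments on the sketch itself. First, for $z\in[U,U]$ you reach for relation~(iv), but relation~(v) gives it in one line: for $|i-j|\geq 2$ one computes $\overline{t_i}\,\overline{t_j}\,\overline{t_i}^{-1}\overline{t_j}^{-1}=\overline{t_i}\,\overline{t_j}\,\overline{t_i}\,\overline{t_j}=z$, using $\overline{t_i}^{-1}=z\overline{t_i}$ and then (v) once. Second, your argument that $z\in[V,V]$ is the one place that is genuinely thin: knowing $z=(\overline{t_1}\overline{t_2})^3$ with $\overline{t_1}\overline{t_2}\in V$ does not by itself put $z$ in $[V,V]$, and the ``commutator rewriting as above'' used elements $\overline{t_i}\notin V$. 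One clean fix (which the paper in fact carries out later, in the proof of Lemma~\ref{aux_v4}) is the explicit identity $z=[\overline{t_1}\overline{t_3},\,\overline{t_2}\overline{t_1}\overline{t_2}\overline{t_3}\overline{t_2}\overline{t_3}]$, both entries of which lie in $V$. With that adjustment your sketch is correct.
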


\begin{proof}
See Schur's original paper \cite{S11} or \cite[Chapter 2]{HH92} for a more modern exposition regarding generalized representation groups of $S_n$. 
The $A_n$ case is dealt with in \cite[\S3]{macedo}.\end{proof}

\section{Applications to $A_n$ and $S_n$ extensions}\label{sect_applications}

In this section we apply the results of the preceding sections to study the HNP and weak approximation for norm one tori of $A_n$ and $S_n$ extensions. Throughout the section, we fix the following notation: $L/K/k$ is a tower of number fields such that $L/k$ is Galois and $G=\Gal(L/k)$ is isomorphic to $A_n$ or $S_n$ with $n \geq 4$. We set $H=\Gal(L/K)$. 
For any subgroup $G'$ of $G$, we denote by $F_{G/G'}$ a flasque module in a flasque resolution of the Chevalley module $J_{G/G'}$. Let $X/k$ be a smooth compactification of the torus $T=R^1_{K/k}\bbG_m$.
We use the isomorphism~\eqref{cs_flasque} in
Theorem~\ref{thmcs}
to identify $\operatorname{H}^1(k,\Pic \overline{X})$ with
$\operatorname{H}^1(G,F_{G/H}).$

\subsection{Results for general $n$} First, we complete the proof of Theorem~\ref{main0}. For $G\cong A_n$ or $S_n$, we have $\operatorname{H}^3(G,\Z)\cong \Z/2$, unless $G\cong A_6$ or $A_7$ in which case $\operatorname{H}^3(G,\Z)\cong \Z/6$. Therefore, in our proof of Theorem~\ref{main0}, we can apply Corollary~\ref{an_sn_p_part_intro} to deal with the odd order torsion. It remains to analyze the $2$-primary parts of $\mathfrak{K}(K/k)$ and $\operatorname{H}^1(G,F_{G/H}).$ We start with the simpler case where $|H|$ is odd.

\begin{proposition}\label{h_odd_2}
If $|H|$ is odd, then

\begin{enumerate}[label=(\roman{*})]
 
    \item\label{cc2} $\operatorname{H}^1(G,F_{G/H})_{(2)} = \Z/2$, and
    
      \item\label{cc1} $\mathfrak{K}(K/k)_{(2)} =  \mathfrak{K}(L/k)_{(2)}$ and $\mathfrak{K}(K/k)_{(2)}$ has size at most 2.
\end{enumerate}  \end{proposition}

\begin{proof}

\begin{enumerate}[label=(\roman{*})]
 \item This follows from Corollary~\ref{gp_containment_intro}\ref{no p middle gp}.
    \item This is a consequence of Theorem~\ref{mid_gp_general_norm} and isomorphism \eqref{eq:tate_knot} of Theorem~\ref{Tate}. 
   \qedhere
    \end{enumerate}
\end{proof} 

\begin{proof}[Proof of Theorem~\ref{main0} for $|H|$ odd]
We analyze the $p$-primary parts of the groups in Theorem~\ref{main0} for each prime $p$. For $p$ odd, apply Corollary~\ref{an_sn_p_part_intro} and use the fact that $\mathfrak{K}(L/k)^\sim \hookrightarrow \operatorname{H}^3(G,\Z) = \Z/2$ (Theorem~\ref{Tate}). For $p=2$, use Proposition~\ref{h_odd_2}. By Theorem~\ref{thm:unramified_description_intro}, $\mathfrak{F}_{nr}(L/K/k) = \mathfrak{F}(G,H)$ is a subquotient of $H \cap [G,G]$, whereby $\mathfrak{F}_{nr}(L/K/k)_{(2)}=1$, since $|H|$ is odd.
As $\mathfrak{F}_{nr}(L/K/k)$ surjects onto $\mathfrak{F}(L/K/k)$, we also have $\mathfrak{F}(L/K/k)_{(2)}=1$.
\end{proof}

We now solve the case where $|H|$ is even. For this, we will use the generalized representation group $\overline{G}$ of $G$, the projection map $\lambda$ and the base normal subgroup $K=\langle z \rangle$ presented in Proposition~\ref{pres}, so our next two results do not apply when $G\cong A_6$ or $A_7$.

\begin{lemma}\label{aux_v4}
Suppose that $G$ is not isomorphic to $A_6$ or $A_7$ and that $|H|$ is even.  Let $h \in H$ be any element of order $2$. Then there exists a copy $A$ of $V_4$ inside $G$ such that 

\begin{itemize}
    \item $h \in A$;
    \item $z \in [\lambda^{-1}(A),\lambda^{-1}(A)]$.
\end{itemize} 
\end{lemma}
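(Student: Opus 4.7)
The plan is a case analysis based on the cycle type of $h$. Since $h$ has order $2$ in $G\cong A_n$ or $S_n$, we may write $h=\tau_1\cdots\tau_m$ as a product of $m\geq 1$ disjoint transpositions (with $m$ forced to be even when $G\cong A_n$). The key input, obtained from Proposition~\ref{pres} by conjugation in $\overline{G}$, is that any lift of a transposition squares to $z$, and that lifts of two \emph{disjoint} transpositions anticommute (their commutator equals $z$). As a consequence, $\overline{t_1}\,\overline{t_3}$ has order $4$, and similarly every lift of a double transposition supported on $\{1,2,3,4\}$ has order $4$; hence $\lambda^{-1}(V_4)\cong Q_8$ for the standard $V_4\leq A_4$ on $\{1,2,3,4\}$ (this being the unique $2$-fold cover of $V_4$ in which all non-central elements have order $4$), and in particular $z$ lies in its commutator subgroup.

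The case $m=1$ forces $G\cong S_n$. Since $n\geq 4$, I would pick a transposition $\tau$ with support disjoint from $h$ and set $A:=\langle h,\tau\rangle\cong V_4$; the anticommutation rule above gives $[\overline{h},\overline{\tau}]=z$ immediately, so $z\in[\lambda^{-1}(A),\lambda^{-1}(A)]$.

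The case $m\geq 2$ is the main one and carries the main obstacle. Write $\tau_1=(a,b)$, $\tau_2=(c,d)$, let $\sigma:=(a,c)(b,d)\in A_n$, and set $A:=\langle h,\sigma\rangle$. A direct check shows that $\sigma$ commutes with $\tau_1\tau_2$ (these are two non-trivial elements of the standard $V_4$ on $\{a,b,c,d\}$) and with each $\tau_i$ for $i\geq 3$ (disjoint supports), hence $\sigma$ commutes with $h$; a quick distinctness argument then confirms that $A=\{1,h,\sigma,h\sigma\}\cong V_4$ lies in $G$ and contains $h$. The obstacle is computing $[\overline{h},\overline{\sigma}]$ in $\overline{G}$. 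For this, I would decompose $\overline{h}=\overline{\tau_1\tau_2}\cdot\overline{\tau_3\cdots\tau_m}$ and apply the commutator identity $[xy,w]=[x,w]^y[y,w]$: the first factor $[\overline{\tau_1\tau_2},\overline{\sigma}]$ equals $z$ by the $Q_8$ argument of the first paragraph applied to $V_4\leq A_4$ on $\{a,b,c,d\}$, while $[\overline{\tau_3\cdots\tau_m},\overline{\sigma}]=1$ because pushing $\overline{\sigma}$ past each $\overline{\tau_i}$ ($i\geq 3$) introduces a factor $z^2=1$ (two disjoint-transposition swaps per $\tau_i$). The centrality of $z$ then yields $[\overline{h},\overline{\sigma}]=z$, which completes the verification.
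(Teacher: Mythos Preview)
Your proof is correct and follows essentially the same strategy as the paper: the same case split on the number of transpositions in $h$, the same choice of $A=\langle h,\sigma\rangle$ with $\sigma=(a,c)(b,d)$ in Case~2, and the same use of the commutator identity $[xy,w]=[x,w]^y[y,w]$ together with the anticommutation $[\overline{\tau},\overline{\tau'}]=z$ of lifts of disjoint transpositions.

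The execution differs in two small but pleasant ways. First, the paper handles the base case $h=(1\,2)(3\,4)$, $\sigma=(1\,3)(2\,4)$ by a direct (and, as it admits, long) computation in the Coxeter-type presentation, whereas you observe once and for all that $\lambda^{-1}$ of the Klein four-group on $\{a,b,c,d\}$ is $Q_8$ (since every lift of a double transposition squares to $z$), so $[\overline{\tau_1\tau_2},\overline{\sigma}]=z$ is immediate. Second, the paper proceeds by induction, peeling off one transposition at a time and commuting it past a fixed six-letter word for $\overline{\sigma}$ to pick up $z^6=1$; you instead split $h$ once into $\tau_1\tau_2$ and the remainder, and argue that each $\overline{\tau_i}$ with $i\geq 3$ commutes with $\overline{\sigma}$ because it anticommutes with each of the two transposition lifts comprising $\overline{\sigma}$, giving $z^2=1$. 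Both arguments are the same parity count in disguise, but yours is coordinate-free and avoids the explicit Coxeter word for $\sigma$.
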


\begin{proof}

\textbf{Case 1) $h$ comprises a single transposition.} 
Relabeling if necessary, we can assume that $h=(1 \hspace{5pt} 2)$. Take $A=\langle (1 \hspace{5pt} 2),(3 \hspace{5pt} 4) \rangle$ and note that $[\lambda^{-1} \left( (1 \hspace{5pt} 2) \right),\lambda^{-1} \left((3 \hspace{5pt} 4) \right) ]=[\overline{t_1}, \overline{t_3}]$ in the notation of Proposition \ref{pres}. Using the relations satisfied by the elements $\overline{t_i} \in \overline{G}$ given in Proposition \ref{pres}, it is clear that this commutator is equal to $z$, as desired.

\textbf{Case 2) $h$ comprises more than one transposition.} Relabeling if necessary, we can assume that $h=(1 \hspace{5pt} 2)(3 \hspace{5pt} 4) \cdots (n-1 \hspace{5pt} n) $ for some even $n \geq 4$. Take $A=\langle h,x \rangle$, where $x=(1 \hspace{5pt} 3)(2 \hspace{5pt} 4)$ and let us prove by induction that $z=[\lambda^{-1}(h),\lambda^{-1}(x)]$. Note that, in the notation of Proposition \ref{pres}, we have $h= t_1 . t_3. \cdots . t_{n-1}$ and $x=t_2.t_1.t_2.t_3.t_2.t_3$.

\textbf{Base case $n=4$:} A straightforward (but long) computation using the relations satisfied by the elements $\overline{t_i}$ given in Proposition \ref{pres} shows that $[\lambda^{-1}(h),\lambda^{-1}(x)]=[\overline{t_1}.\overline{t_3},\overline{t_2}.\overline{t_1}.\overline{t_2}.\overline{t_3}.\overline{t_2}.\overline{t_3}]=z$.

\textbf{Inductive step:} Suppose that $h=(1 \hspace{5pt} 2)(3 \hspace{5pt} 4) \cdots (n-1 \hspace{5pt} n)(n+1 \hspace{5pt} n+2)$. Denoting the permutation $(1 \hspace{5pt} 2)(3 \hspace{5pt} 4) \cdots (n-1 \hspace{5pt} n)$ by $\tilde{h}$, write $h=\tilde{h}. t_{n+1}$. Now \[[\lambda^{-1}(h),\lambda^{-1}(x)]=[\lambda^{-1}(\tilde{h}) \overline{t_{n+1}},\lambda^{-1}(x)]=[\lambda^{-1}(\tilde{h}),\lambda^{-1}(x)]^{\overline{t_{n+1}}} [\overline{t_{n+1}},\lambda^{-1}(x)].\]
By the inductive hypothesis and the relations of Proposition \ref{pres}, $[\lambda^{-1}(\tilde{h}),\lambda^{-1}(x)]^{\overline{t_{n+1}}}=z^{\overline{t_{n+1}}}=z$ and $[\overline{t_{n+1}},\lambda^{-1}(x)]=[\overline{t_{n+1}},\overline{t_2}.\overline{t_1}.\overline{t_2}.\overline{t_3}.\overline{t_2}.\overline{t_3}]=1$, as desired.\end{proof}

The next proposition completes the proof of Theorem~\ref{main0}.

\begin{proposition}\label{1obs_even}
Suppose that $G$ is not isomorphic to $A_6$ or $A_7$ and that $|H|$ is even. Then
\begin{enumerate}[label=(\roman{*})]
    \item\label{even_cond2} $\operatorname{H}^1(G,F_{G/H})^\sim = \mathfrak{F}_{nr}(L/K/k)$;
    \item\label{even_cond1} $\mathfrak{K}(K/k)= \mathfrak{F}(L/K/k)$.
\end{enumerate}
\end{proposition}

\begin{proof}
\begin{enumerate}[label=(\roman{*}), leftmargin=*] 
\item By Theorems~\ref{thm:unramified_description_intro}, \ref{mid_gp_generalized} and isomorphism \eqref{cs_flasque} of Theorem~\ref{thmcs}, if we can show that $\mathfrak{F}(\overline{G},\overline{H}) \cong \mathfrak{F}({G},{H})$ then it will follow that the natural surjection $\operatorname{H}^1(G,F_{G/H})^\sim \twoheadrightarrow \mathfrak{F}_{nr}(L/K/k)$ is an isomorphism.
By Lemma~\ref{summary_objects}, it suffices to check that $\Ker \lambda \subset \Phi^{\overline{G}}(\overline{H})$, i.e.~that $z \in \Phi^{\overline{G}}(\overline{H})$. Let $A=\langle h,x\rangle$ be the copy of $V_4$ constructed in the proof of Lemma \ref{aux_v4}. Then $h \in H \cap x H x^{-1}$ and therefore $z = [\lambda^{-1}(h),\lambda^{-1}(x)] \in \Phi^{\overline{G}}(\overline{H})$, as desired.
\item By the isomorphism \eqref{cs_flasque} of Theorem~\ref{thmcs}, the statement in \ref{even_cond2} implies that the map $f_{L/K}$ in diagram~\eqref{eq:first_obs} is trivial. As this diagram is commutative, it follows that $g_{L/K}$ is also trivial and thus $\mathfrak{K}(K/k) = \Sha(T) = \Coker(g_{L/K})=\mathfrak{F}(L/K/k)$.\qedhere\end{enumerate}\end{proof}

Now that we have proved Theorem~\ref{main0}, we have reduced the study of the HNP and weak approximation for norm one tori of $A_n$ and $S_n$ extensions to a purely computational problem (except in the cases of $A_6$ and $A_7$). The groups $\mathfrak{F}(L/K/k)$ and $\mathfrak{K}(L/k)$ can be computed using the GAP algorithms described in Remark~\ref{1obs_alg} and
at the end of Section~\ref{sec:comp_met} below. The calculations of the knot group and of $\operatorname{H}^1(k,\Pic \overline{X})$ in the remaining cases where $G\cong A_6, A_7$ are done in Section \ref{sec:casea6}.

\begin{remark}
The method employed in this section to provide explicit and computable formulae for the knot group and the invariant $\operatorname{H}^1(k,\Pic \overline{X})$ in $A_n$ and $S_n$ extensions works for other families of extensions. For example, let $G'$ be any finite group such that $\operatorname{H}^3(G',\Z) = \Z/2$. Embed $G'$ into $S_n$ for some $n$ and suppose that $G'$ contains a copy of $V_4$ conjugate to $\langle (1,2)(3,4),(1,3)(2,4) \rangle$. For such a group $G'$, analogues of Lemma \ref{aux_v4} and Propositions \ref{h_odd_2} and \ref{1obs_even}
yield a systematic approach to the study of the HNP and weak approximation for $G'$-extensions.
\end{remark}

We proceed by investigating the possible isomorphism classes of the finite abelian group $\mathfrak{F}(G,H)$ (and thus, by Theorems~\ref{thm:unramified_description_intro}, \ref{main0} and isomorphism \eqref{cs_flasque}, of the invariant $\operatorname{H}^1(G,F_{G/H})$ as well).

\begin{proposition}\label{sn_unr}
The group $\mathfrak{F}(S_n,H)$
is an elementary abelian $2$-group. Moreover, every elementary abelian $2$-group occurs as $\mathfrak{F}(S_n,H)$ for some $n$ and some $H \leq S_n$.
\end{proposition}

\begin{proof}
It suffices to prove that for every element $h \in H \cap [S_n,S_n]$, we have $h^2 \in \Phi^{S_n}(H)$. This is clear from the definition of $\Phi^{S_n}(H)$ because $h$ is conjugate to its inverse in $S_n$. The statement on the occurrence of every elementary abelian $2$-group is shown in Proposition~\ref{2group_occurs} below. 
\end{proof}

\begin{proposition}\label{an_unr}
The group $\mathfrak{F}(A_n,H)$ is either isomorphic to $C_3$ or an elementary abelian $2$-group. Moreover, every such possibility is realised for some choice of $n$ and $H$.

\end{proposition}

\begin{proof}
First, we claim that any element of even order in $\mathfrak{F}(A_n,H)$ is $2$-torsion. Let $h\in H$ 
have even order. By \cite{groupprops},
$h$ is $A_n$-conjugate to $h^{-1}$. Therefore $h^2\in\Phi^{A_n}(H)$, which proves the claim.

Next, we claim that any element of odd order in $\mathfrak{F}(A_n,H)$ is $3$-torsion. Let $h\in H$ be such that its image in $\mathfrak{F}(A_n,H)$ has odd order. Replacing $h$ by a suitable power, we may assume that $h$ itself has odd order, whereby $h$ is $S_n$-conjugate to $h^2$. By the pigeonhole principle, at least two of the three $S_n$-conjugate elements $h, h^{-1},h^2$ are $A_n$-conjugate. Therefore, 
at least one of $h^{-2}, h, h^{3}$ is in $\Phi^{A_n}(H)$. Since $h$ has odd order, we conclude that in all cases $h^3\in \Phi^{A_n}(H)$, whence the claim.

Next, we show that $\mathfrak{F}(A_n,H)_{(3)}$ is cyclic. 
Suppose for contradiction that the images in $\mathfrak{F}(A_n,H)$ of $h_1,h_2 \in H$ generate a copy of $C_3\times C_3$. Replacing $h_1$ and $h_2$ by suitable powers if necessary, we may assume that the lengths of the cycles making up $h_1$ and $h_2$ are powers of $3$, say $3^{r_1} \leq 3^{r_2} \leq \cdots \leq 3^{r_k}$ for $h_1$ and $3^{s_1} \leq 3^{s_2} \leq \cdots \leq 3^{s_l}$ for $h_2$, where $k,l \geq 1$ and $r_i,s_j \in \Z_{\geq 0}$. Note that $h_1$ and $h_1^{-1}$ cannot be $A_n$-conjugate, or else we would have $h_1^2 \in \Phi^{A_n}(H)$, and similarly for $h_2$. The criterion \cite{groupprops} for an element of $A_n$ to be conjugate to its inverse yields $3^{r_i} \neq 3^{r_j}$ and $3^{s_i} \neq 3^{s_j}$ for $i \neq j$. Since $n=\sum\limits_{i=1}^{k} 3^{r_i}=\sum\limits_{i=1}^{l} 3^{s_i}$, the uniqueness of the representation of $n$ in base $3$ implies that $k=l$ and $r_i=s_i$ for every $i$. Thus the cycle structures of $h_1$ and $h_2$ are identical and hence $h_1 ,h_2$ and $h_2^2$ are conjugate in $S_n$. Therefore, at least two of these elements are $A_n$-conjugate, whereby at least one of $h_1^{-1}h_2,h_1^{-1}h_2^2, h_2$ is in $\Phi^{A_n}(H)$. This contradicts the assumption that the images of $h_1$ and $h_2$ generate a non-cyclic subgroup of $\mathfrak{F}(A_n,H)$. One can compute that $\mathfrak{F}(A_{12},H) \cong C_3$ for $H=\langle (1,2,3)(4,5,6,7,8,9,10,11,12) \rangle$ using GAP, for example. The statement on the occurrence of every elementary abelian $2$-group is shown in Proposition~\ref{2group_occurs} below.
 \end{proof}

\begin{proposition}\label{2group_occurs}
For every $k \geq 0$, there exists $n$ and a subgroup $H$ of $A_n$ such that \[\mathfrak{F}(A_n,H)_{(2)}\cong \mathfrak{F}(S_n,H)_{(2)} \cong C_2^k.\] 
\end{proposition}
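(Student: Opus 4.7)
The plan is to reduce the problem to a combinatorial construction via the cycle-type criterion for conjugacy in $S_n$. Two elements of $S_n$ are $S_n$-conjugate if and only if they have the same cycle type, so if I can build an elementary abelian $2$-subgroup $H \leq A_n$ whose $2^k$ elements all have pairwise distinct cycle types, then no two distinct elements of $H$ will be $S_n$-conjugate. The focal subgroup $\Phi^{S_n}(H)$, generated (as in Definition~\ref{def:focal}) by products of the form $h_1^{-1}h_2$ for $S_n$-conjugate pairs $h_1, h_2 \in H$, will then be trivial; and since $\Phi^{A_n}(H) \subseteq \Phi^{S_n}(H)$, the subgroup $\Phi^{A_n}(H)$ will also be trivial. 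Combined with $[A_n, A_n] = A_n$ for $n \geq 5$ and $[S_n, S_n] = A_n$, which give $H \cap [A_n, A_n] = H \cap [S_n, S_n] = H$, this yields $\mathfrak{F}(A_n, H) = \mathfrak{F}(S_n, H) = H \cong C_2^k$, and the $2$-primary parts coincide with the full groups since $H$ is a $2$-group.

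The case $k = 0$ is trivial: take $H = 1$ inside any $A_n$ with $n \geq 5$. For $k \geq 1$, I would pick $n \geq \max(5, 2^{k+2} - 4)$ and, for each $i \in \{1, \ldots, k\}$, let $h_i$ be a product of $2^i$ disjoint transpositions, chosen so that the transpositions making up $h_i$ and $h_j$ involve disjoint sets of integers for all $i \ne j$. Each $h_i$ has order $2$ and is an even permutation (as $2^i$ is even), and the disjointness of supports makes the $h_i$ commute and satisfy no nontrivial relation, so $H := \langle h_1, \ldots, h_k \rangle \cong C_2^k$.

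The combinatorial core is then immediate: for any subset $I \subseteq \{1,\ldots,k\}$, the element $\prod_{i \in I} h_i$ is a product of exactly $s(I) := \sum_{i \in I} 2^i$ disjoint transpositions, hence has cycle type $2^{s(I)}\, 1^{n - 2 s(I)}$. By uniqueness of binary representation, the values $s(I)$ for $I \subseteq \{1, \ldots, k\}$ are pairwise distinct, so distinct elements of $H$ have distinct cycle types, completing the verification. I do not foresee any genuine obstacle; the only real choice in the argument is the sequence of exponents $(2^i)_{i=1}^{k}$, and any sequence of positive even integers with pairwise distinct subset sums would serve equally well.
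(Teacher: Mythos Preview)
Your proof is correct and follows essentially the same approach as the paper's: both construct $H\cong C_2^k$ generated by pairwise disjoint even involutions chosen so that all $2^k$ elements of $H$ have distinct cycle types, forcing $\Phi^{S_n}(H)=\Phi^{A_n}(H)=1$. Your version is slightly tidier in that you fix $h_i$ to be a product of exactly $2^i$ disjoint transpositions and invoke uniqueness of binary expansions to verify distinctness of cycle types, whereas the paper describes the same construction recursively (its examples $H_1,H_2$ are precisely your $h_1,h_2$) and leaves the verification as ``straightforward''.
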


\begin{proof}
The case $k=0$ is realised by letting $H=1$. From now on, assume that $k\geq 1$.
Let $H$ be generated by $k$ commuting and even permutations of order $2$ such that, for any $x,y \in H$ with $x \neq y$, the permutations $x$ and $y$ have distinct cycle structures. We define such a group recursively as $H=H_k$, starting from $H_1 = \langle (1,2)(3,4) \rangle$, $H_2=\langle (1,2)(3,4),(5,6)(7,8)(9,10)(11,12) \rangle$ and adding, at step $i$, a new generator $h_i$ such that:

\begin{itemize}
    \item $h_i$ is an even permutation of order $2$;
    \item $h_i$ is disjoint to the previous generators $h_1,\dots, h_{i-1}$;
    \item $h_i$ moves enough points so that its product with any element of $H_{i-1}$ has cycle structure different from that of any element of $H_{i-1}$.
\end{itemize}

\noindent Let $n$ be large enough so that $H\subset A_n$. It is straightforward to check that one then has $\Phi^{A_n}(H)=\Phi^{S_n}(H)=1$. Therefore, $\mathfrak{F}(A_n,H)=H \cap [A_n,A_n] = H \cong C_2^{k} $ and similarly for $\mathfrak{F}(S_n,H)$.\end{proof}

As a consequence of the work done so far, we can now establish Theorem \ref{thm:an_sn_options} and Corollary~\ref{cor:thorough}.

\begin{proof}[Proof of Theorem \ref{thm:an_sn_options}]
For $G\not\cong A_6$ or $A_7$ the results follow from Theorems~\ref{thm:unramified_description_intro} and \ref{main0} and Propositions~\ref{sn_unr} and \ref{an_unr}. For the $A_6$ and $A_7$ cases, we describe how to compute $\operatorname{H}^1(k,\Pic \overline{X})$ in Section~\ref{sec:casea6} -- the results of these computations are in Tables~\ref{a6_table} and \ref{a7_table} of the Appendix and the $C_3$ and $C_6$ cases occur therein.\end{proof}

\begin{proof}[Proof of Corollary \ref{cor:thorough}]
Theorem~\ref{thm:an_sn_options} shows that $\operatorname{H}^1(k,\Pic \overline{X})_{(p)}=0$ for a prime $p > 3$ and that $\operatorname{H}^1(k,\Pic \overline{X})_{(3)}=0$ if $G\cong S_n$. 
Theorem~\ref{thm:unramified_description_intro} gives $\mathfrak{F}_{nr}(L/K/k)=\mathfrak{F}(G,H)$. By Theorem~\ref{thm:an_sn_options}, $\operatorname{H}^1(k,\Pic \overline{X})^{\sim}_{(3)}$ is $3$-torsion, so Theorem~\ref{main0} gives $\operatorname{H}^1(k,\Pic \overline{X})^{\sim}_{(3)}=\mathfrak{F}(G,H)[3]$. 
Let $K_3=L^{H_3}$ and let $X_3$ be a smooth compactification of $R^1_{K_3/k} \bbG_m$. Now Corollary~\ref{cor:mid_gp_general_intro} and Theorem~\ref{main0} give $\operatorname{H}^1(k,\Pic \overline{X})^{\sim}_{(3)}\cong\operatorname{H}^1(k,\Pic \overline{X_3})^{\sim}_{(3)}=\mathfrak{F}(G,H_3)$.
If $|H|$ is odd then $\mathfrak{F}(G,H)_{(2)}$ is trivial and hence $\operatorname{H}^1(k,\Pic \overline{X})^{\sim}_{(2)}=\Z/2$ by Theorem~\ref{main0}. The result for $\operatorname{H}^1(k,\Pic \overline{X})^{\sim}_{(2)}$ when $|H|$ is even is obtained in a similar way to the result for the $3$-primary part.
\end{proof}

The following corollary of Theorem~\ref{thm:an_sn_options} and Corollary~\ref{cor:mid_gp_general_intro} gives a useful shortcut when analyzing the HNP and weak approximation for $S_n$ extensions, enabling one to reduce to the case where $H$ is a $2$-group. 

\begin{corollary}\label{cor:S_nSylow2}
Suppose that $G\cong S_n$, let $H_2$ be a Sylow $2$-subgroup of $H$ and let $K_2$ denote its fixed field. Let $X_2$ be a smooth compactification of $T_2=R^{1}_{K_2/k} \mathbb{G}_m$. Then we obtain a commutative diagram with exact rows as follows, where the vertical isomorphisms are induced by the natural inclusion $T\hookrightarrow T_2$:
\[
\xymatrix{0 \ar[r]& A(T) \ar[r]\ar[d]_{\cong}& \operatorname{H}^1(k,\operatorname{Pic}\overline{X})^{\sim} \ar[r]\ar[d]_{\cong} &\Sha(T)\ar[d]_{\cong} \ar[r]& 0\\
0 \ar[r]& A(T_2)\ar[r]& \operatorname{H}^1(k,\operatorname{Pic}\overline{X_2})^{\sim} \ar[r] &\Sha(T_2) \ar[r]& 0.
}
\]
Alternatively, the norm map $N_{K_2/K}:T_2 \twoheadrightarrow T$ can be used to obtain a similar commutative diagram with the direction of the vertical isomorphisms reversed.
\end{corollary}

\begin{remark}
Corollary~\ref{cor:S_nSylow2} also holds in the case $G\cong A_n$ provided $n\neq 6,7$ and $\mathfrak{F}(G,H)_{(3)}=1$. In Proposition~\ref{characterization_3torsion} we show that for most $n$ we have $\mathfrak{F}(A_n,H)_{(3)}=1$ for all subgroups $H$.
\end{remark}

The next lemma will aid our characterization of the existence of elements of order $3$ in $\mathfrak{F}(A_n,H)$. 

\begin{lemma}\label{3_part_aux_lemma}
Let $n=3^{l}$ for some $l \geq 0$ and let $\rho=(a_1 \cdots \hspace{5pt} a_{3^l})$ be a $3^{l}$-cycle in $S_n$. Let $j\in\bbZ$ with $j\equiv -1 \pmod{3}$. Then $\rho^j$ is $A_n$-conjugate to $\rho$ if and only if $l$ is even. 
\end{lemma}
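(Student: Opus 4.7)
The plan is to reduce the claim to computing the sign of a concrete permutation and then invoke Zolotarev's lemma.

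First I would observe that since $j \equiv -1 \pmod 3$ and $n = 3^l$, we have $\gcd(j, n) = 1$, so $\rho^j$ is again a $3^l$-cycle and is $S_n$-conjugate to $\rho$. Because $\rho$ is a single cycle of odd length $3^l$, its cycle type consists of distinct odd parts, so its $S_n$-class splits in $A_n$; equivalently, $C_{S_n}(\rho) = \langle \rho \rangle \subset A_n$. Hence all $S_n$-conjugators $\sigma$ from $\rho$ to $\rho^j$ share the same parity, and $\rho$ and $\rho^j$ are $A_n$-conjugate if and only if any one such $\sigma$ is even.

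Next I would exhibit an explicit conjugator: define $\sigma$ on $\{a_1, \dots, a_{3^l}\}$ by $\sigma(a_i) = a_{1+(i-1)j}$, with indices taken modulo $3^l$. A direct check gives $\sigma \rho \sigma^{-1} = \rho^j$. Relabelling $a_i \leftrightarrow i-1 \in \bbZ/3^l\bbZ$ identifies $\rho$ with translation by $1$ and $\sigma$ with the multiplication map $m_j : k \mapsto jk$ on $\bbZ/3^l\bbZ$. The problem thus reduces to computing the sign of $m_j$ as a permutation of $\bbZ/3^l\bbZ$.

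For this last step, the quickest route is Zolotarev's lemma, which equates $\operatorname{sign}(m_j)$ with the Jacobi symbol $\bigl(\tfrac{j}{3^l}\bigr) = \bigl(\tfrac{j}{3}\bigr)^l = (-1)^l$, using that $-1$ is a non-square modulo $3$. If one prefers to avoid quoting Zolotarev, one can argue that $j \mapsto \operatorname{sign}(m_j)$ is a homomorphism $(\bbZ/3^l\bbZ)^* \to \{\pm 1\}$, and since the domain is cyclic of order $2 \cdot 3^{l-1}$, this homomorphism is either trivial or the unique nontrivial quadratic character $\chi$ (which satisfies $\chi(j) = -1 \iff j \equiv -1 \pmod 3$). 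Evaluating at $j = -1$: the map $m_{-1}$ fixes $0$ and pairs $k$ with $-k$, so it is a product of $(3^l - 1)/2$ transpositions and has sign $(-1)^{(3^l-1)/2} = (-1)^l$ (by checking $3^l$ modulo $8$). In either approach, $\operatorname{sign}(\sigma) = (-1)^l$, which gives the claim. The only obstacle is keeping the indexing conventions and the mod $8$ sign computation straight; everything else is formal.
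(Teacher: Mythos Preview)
Your proof is correct and takes a genuinely different route from the paper's. Both arguments begin identically: construct the explicit conjugator $\sigma$ (the paper calls it $x$) with $\sigma(a_i)=a_{1+(i-1)j}$, note that the $S_n$-class of $\rho$ splits in $A_n$, and reduce the question to determining the parity of $\sigma$. From there the paths diverge. The paper analyses the cycle structure of $\sigma$ directly by counting fixed points of its powers: it shows the only odd-length cycle is the fixed point $a_1$, that no cycle has length divisible by $4$, and hence every nontrivial cycle has length $\equiv 2 \pmod 4$; summing cycle lengths then forces $\sigma\in A_n \iff 3^l\equiv 1\pmod 4$. Your approach instead recognises $\sigma$ as the multiplication-by-$j$ map on $\bbZ/3^l\bbZ$ and reads off its sign via Zolotarev's lemma (or, in your alternative, via the structure of characters on the cyclic group $(\bbZ/3^l\bbZ)^*$ together with the easy direct evaluation at $j=-1$). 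Your argument is shorter and more conceptual, trading the paper's self-contained combinatorics for a classical number-theoretic input; the paper's version has the mild advantage of being entirely elementary and not invoking any external lemma.
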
 

\begin{proof}
Observe that $\rho^{j}(a_i)=a_{i+j}$, where the subscripts are considered modulo $3^l$. Therefore, the permutation $x \in S_n$ defined by $x(a_i)=a_{1+(i-1)j }$ satisfies $x\rho x^{-1}=\rho^j$. Let $C$ be the $A_n$-conjugacy class of $\rho$. Since the $S_n$-conjugacy class of $\rho$ splits as a disjoint union $C \sqcup g C g^{-1}$ for any $g \in S_n \setminus A_n$, it is enough to show that $x\in A_n$ if and only if $l$ is even.
We study the cycle structure of $x$ by analyzing the fixed points of its powers. Observe that $x^{t}(a_i)=a_{1+(i-1)j^{t}}$ for every $t \geq 0$ and so \[x^t(a_i)=a_i \Leftrightarrow 1+(i-1)j^t \equiv i  \pmod{3^{l}} \Leftrightarrow (i-1)(j^t-1) \equiv 0 \pmod{3^{l}}.\] 
Therefore, the number of fixed points of $x^t$ is $\gcd(3^{l},j^{t}-1)$. Using this fact, we note two useful properties of the cycles occurring in a disjoint cycle decomposition of $x$:

\begin{enumerate}[label=(\roman{*}), leftmargin=*]
    \item\label{3_part_aux_lemma_step1} \textbf{The only cycle of }\bm{$x$}\textbf{ with odd length corresponds to the fixed point} \bm{$a_1$}\textbf{:} It suffices to show that, for odd $t \geq 1$, the only fixed point of $x^t$ is $a_1$. As $j \equiv -1 \pmod{3}$, it is easy to see that $j^{t}-1 \not\equiv 0 \pmod{3}$ for odd $t$ and thus $\gcd(3^{l},j^{t}-1)=1$.
    
    \item\label{3_part_aux_lemma_step2} \bm{$x$}\textbf{ does not contain a cycle with length divisible by $4$}\textbf{:} It is enough to prove that, for any $m \geq 1$, the number of fixed points of $x^{4m}$ and $x^{2m}$ coincide, i.e. that $\gcd(3^{l},j^{4m}-1)=\gcd(3^{l},j^{2m}-1)$. This is clear since $j^{4m}-1=(j^{2m}-1)(j^{2m}+1)$ and $j^{2m}+1 \not\equiv 0 \pmod{3}$.
    
\end{enumerate}
Let $c_1\cdot\ldots \cdot c_k$ be a disjoint cycle decomposition of $x$ where the cycle $c_i$ has length $|c_i|$. By \ref{3_part_aux_lemma_step1} and \ref{3_part_aux_lemma_step2}, we may assume that $|c_1|=1$ and $|{c_i}|\equiv 2 \pmod{4}$ for all $i\geq 2$. Note that $x\in A_n$ if and only if $k$ is odd.
Now $3^{l}= \sum_i |{c_i}|\equiv 1+\sum\limits_{i\geq 2} 2 \pmod{4}.$
Thus, $x\in A_n$  if and only if $3^l\equiv 1\pmod{4}$. 
\end{proof}

\begin{proposition}\label{characterization_3torsion}
There exists $H\leq A_n$ such that $\mathfrak{F}(A_n,H)_{(3)}\cong C_3$ if and only if $n \geq 5$ and $n = \sum\limits_{i=1}^{k} 3^{r_i}$ with $0\leq r_1< \dots <r_k$ and $|\{i \mid  r_i \text{ is odd}\}|$ is odd. 
\end{proposition}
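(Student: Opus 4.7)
The plan is to reduce from arbitrary $H\le A_n$ to the cyclic case $H=\langle h\rangle$ for a $3$-element $h\in A_n$, and then to analyze when $h\in\Phi^{A_n}(\langle h\rangle)$ in terms of the cycle structure of $h$. First I would invoke Theorems~\ref{mid_gp_general_intro} and \ref{an_unr}: these respectively allow replacing $H$ by its Sylow $3$-subgroup, and guarantee that $\mathfrak{F}(A_n,H)_{(3)}$ is cyclic of order $1$ or $3$. If $[h]$ generates $\mathfrak{F}(A_n,H)_{(3)}\cong C_3$, the containment $\Phi^{A_n}(\langle h\rangle)\subset\Phi^{A_n}(H)$ yields $h\notin\Phi^{A_n}(\langle h\rangle)$, so the cyclic case already detects a nonzero invariant; conversely, producing such an $h$ gives the required $H$. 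The excluded case $n=4$ arises naturally: $[A_4,A_4]=V_4$ contains no $3$-elements, so $H\cap[A_4,A_4]=1$ for every $3$-subgroup and hence $\mathfrak{F}(A_4,H)_{(3)}=0$; for $n\ge 5$, $A_n$ is perfect and no such obstruction occurs.

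Next I would analyze $\Phi^{A_n}(\langle h\rangle)$ for a $3$-element $h$ of order $3^l$. By definition this subgroup is generated by the elements $h^{b-a}$ with $h^a\sim_{A_n}h^b$. Set $J=\{j\in(\bbZ/3^l)^{\times}\mid h\sim_{A_n}h^j\}$; then $J$ is a subgroup of $(\bbZ/3^l)^{\times}$, which is cyclic of order $2\cdot 3^{l-1}$ with unique index-$2$ subgroup $\{j\equiv 1\pmod 3\}$, so every subgroup either contains $-1$ or is contained in $\{j\equiv 1\pmod 3\}$. Pairs $(a,b)$ with $\gcd(ab,3)\neq 1$ only contribute elements of $\langle h^3\rangle$ to $\Phi^{A_n}(\langle h\rangle)$, because $h^a\sim_{A_n}h^b$ forces $\gcd(a,3^l)=\gcd(b,3^l)$. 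The main technical step, which I expect to be the most delicate, is this bookkeeping: verifying that no such ``inner'' conjugacy relation can put $h$ itself into $\Phi^{A_n}(\langle h\rangle)$. Granting this, one concludes $h\in\Phi^{A_n}(\langle h\rangle)$ if and only if $-1\in J$, i.e.\ iff $h\sim_{A_n}h^{-1}$.

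Finally I would determine when $h\sim_{A_n}h^{-1}$ by a cycle analysis. If two cycles of $h$ have equal length, the permutation swapping them pointwise is an odd element of $C_{S_n}(h)$, forcing the $S_n$- and $A_n$-conjugacy classes of $h$ to coincide, so $h\sim_{A_n}h^{-1}$ automatically. Hence the cycle lengths of $h$ (including fixed points viewed as $1$-cycles) must be pairwise distinct; since they are all powers of $3$, this forces $n=\sum_{i=1}^k 3^{r_i}$ with $0\le r_1<\cdots<r_k$. Writing $h=\prod_i\rho_i$ with $\rho_i$ a $3^{r_i}$-cycle, a canonical conjugator $x=\prod_i x_i$ sending $h$ to $h^{-1}$ is built cycle by cycle; the centralizer $C_{S_n}(h)=\prod_i\langle\rho_i\rangle$ lies entirely in $A_n$ (odd-length cycles are even permutations), so the sign of this specific $x$ decides $A_n$-conjugacy. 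Applying Lemma~\ref{3_part_aux_lemma} to each nontrivial cycle (a $1$-cycle contributes trivially) gives $\operatorname{sign}(x_i)=(-1)^{r_i}$, whence $h\sim_{A_n}h^{-1}$ iff $\sum_i r_i$ is even iff $|\{i:r_i\text{ odd}\}|$ is even. Combining these steps yields the proposition in both directions: the reverse implication follows by exhibiting an explicit $h$ with cycle type $(3^{r_1},\dots,3^{r_k})$, and the forward implication follows from the reduction to cyclic subgroups.
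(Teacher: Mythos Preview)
Your proposal is correct and follows essentially the same strategy as the paper. Both proofs reduce to the question of when a $3$-element $h\in A_n$ satisfies $h\not\sim_{A_n}h^{-1}$, and both resolve this via the sign of a cycle-by-cycle conjugator computed through Lemma~\ref{3_part_aux_lemma}.

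There are two minor differences worth noting. First, you unify both directions through a single analysis of $\Phi^{A_n}(\langle h\rangle)$ via the subgroup $J\le(\bbZ/3^l)^{\times}$, whereas the paper treats them separately: in the forward direction it cites the external criterion \cite{groupprops} for reality in $A_n$, and only in the reverse direction does it carry out the conjugator-sign computation with Lemma~\ref{3_part_aux_lemma}. Your formulation via $J$ is a clean repackaging of the paper's case split ``$3\mid s\Leftrightarrow 3\mid t$'' together with the dichotomy ``either $-1\in J$ or $J\subset\{j\equiv 1\pmod 3\}$''. Second, a small citation point: Theorem~\ref{mid_gp_general_intro} alone concerns $\operatorname{H}^1(k,\Pic\overline{X})$, not $\mathfrak{F}(G,H)$; to get $\mathfrak{F}(A_n,H)_{(3)}\cong\mathfrak{F}(A_n,H_3)$ one needs Theorem~\ref{main0} (or \ref{an_sn_p_part_intro}) as well, which is what the paper invokes. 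In fact this reduction is inessential to your argument: replacing $h$ by a suitable power already makes it a $3$-element, and the inclusion $\Phi^{A_n}(\langle h\rangle)\subset\Phi^{A_n}(H)$ then gives the cyclic reduction directly.
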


\begin{proof}
Suppose that $\mathfrak{F}(A_n,H)_{(3)}\cong C_3$. It is easy to check that $\mathfrak{F}(A_4,H)_{(3)}=1$ for all $H \leq A_4$ so $n \geq 5$. Let $h$ be an element of $H$ such that its image in $\mathfrak{F}(A_n,H)$ generates $\mathfrak{F}(A_n,H)_{(3)}$. Replacing $h$ by a suitable power if necessary, we may assume that the lengths of the cycles making up $h$ are powers of $3$, say $3^{r_1} \leq 3^{r_2} \leq \cdots \leq 3^{r_k}$ with $r_i \in \Z_{\geq 0}$. 
If $h$ were $A_n$-conjugate to $h^{-1}$ then we would obtain $h\in \Phi^{A_n}(H)$, a contradiction. Therefore, by criterion \cite{groupprops} we have $3^{r_i} \neq 3^{r_j}$ for $i \neq j$ and $\sum\limits_{i=1}^{k} \frac{3^{r_i}-1}{2}$ is odd, i.e.~the number of odd $r_i$ is odd.

Conversely, assume that $n \geq 5$ is equal to $\sum\limits_{i=1}^{k} 3^{r_i}$ with $r_1< r_2 < \dots <r_k$ and $|\{i \mid  r_i \text{ is odd}\}|$ odd
and let $H$ be the cyclic group of order $3^{r_k}$ generated by $h$, where $$h=\underbrace{(1 \hspace{5pt} \cdots \hspace{5pt} 3^{r_1})}_{c_1} \underbrace{(3^{r_1}+1 \hspace{5pt} \cdots\hspace{5pt} 3^{r_1}+ 3^{r_{2}})}_{c_2} \dots  \underbrace{(\sum\limits_{i=1}^{k-1}3^{r_i}+1  \hspace{5pt} \cdots\hspace{5pt} n)}_{c_k}.$$

\noindent We will prove that $\mathfrak{F}(A_n,H)_{(3)} \cong C_3$. By Proposition~\ref{an_unr}, it is enough to show that $h\notin \Phi^{A_n}(H)$. Observe that $\Phi^{A_n}(H)$ is generated by elements of the form $h^{s-t}$ where $h^s$ is $A_n$-conjugate to $h^t$. We complete the proof by showing that $\Phi^{A_n}(H)\subset \langle h^3\rangle$. Suppose that $h^s$ is $A_n$-conjugate to $h^t$. We claim that $s\equiv t\pmod{3}$. Since conjugate elements have the same order, $3\mid s$ if and only if $3\mid t$. Now assume that $3\nmid s$. Then $h^s$ generates $H$ and has the same cycle type as $h$ so, relabelling if necessary, we may assume that $s=1$.
Suppose for contradiction that $t\equiv -1\pmod{3}$. For every $1 \leq i \leq k$, let $x_i \in S_n$ be such that $x_i$ only moves points appearing in $c_i$ and $x_ic_ix_i^{-1}=c_i^t$. Then $x=x_1 \cdot \ldots  \cdot x_k$ satisfies $xhx^{-1}=h^t$. Lemma~\ref{3_part_aux_lemma} shows that $x_i\in A_n$ if and only if $r_i$ is even. Since $|\{i \mid  r_i \text{ is odd}\}|$ is odd, $x\in S_n\setminus A_n$. This gives the desired contradiction as the $S_n$-conjugacy class of $h$ splits as a disjoint union $C\sqcup xCx^{-1}$ where $C$ denotes the $A_n$-conjugacy class of $h$.\end{proof}

\begin{remark}
For fixed $n$, it would be interesting to determine the list of isomorphism classes of $\mathfrak{F}(A_n,H)_{(2)}$ or $\mathfrak{F}(S_n,H)_{(2)}$ as $H$ ranges through the subgroups of $A_n$ or $S_n$, respectively. We give some observations regarding this problem without proof:
\begin{itemize}
    \item One can restrict the focus to $A_n$ since $\mathfrak{F}(A_n,H)_{(2)} \cong \mathfrak{F}(S_n,H)_{(2)}$.
    \item One can assume that $H$ is a $2$-group as $\mathfrak{F}(A_n,H)_{(2)} \cong \mathfrak{F}(A_n,H_2)$. 
    \item If $\mathfrak{F}(A_n,H)_{(2)} \cong C_2^k$ for some $k \in \Z_{\geq 0}$, then $k \leq d(H)$, where $d(H)$ denotes the minimal number of generators of $H$; in particular, it follows that $k \leq \frac{n}{2}$.
    \item If $\tilde{H}$ is a subgroup of $H$ of index $2$, then $\frac{1}{2}|\mathfrak{F}(A_n,\tilde{H})_{(2)}|\leq  |\mathfrak{F}(A_n,H)_{(2)}| \leq 2|\mathfrak{F}(A_n,\tilde{H})_{(2)}|$.
    \item If $\mathfrak{F}(A_{n_0},H)_{(2)} \cong C_2^k$ for some $n_0 \geq 1$ and $k \in \Z_{\geq 0}$, then $\mathfrak{F}(A_n,H)_{(2)} \cong C_2^k$ for all $n \geq n_0$.
    \item One has $\mathfrak{F}(A_n,H)_{(2)} \in \{1,C_2\}$ for all $n \leq 11$ and $H \leq A_n$ and $\mathfrak{F}(A_n,H)_{(2)} \in \{1,C_2,C_2^2\}$ for $n=12,13,14$ and all $H \leq A_n$.
\end{itemize}
\end{remark}

\subsection{Computational methods and results for small $n$}\label{sec:comp_met}
In this section we prove Propositions~\ref{main}, \ref{weakap}, Corollary~\ref{cor:A4} and Theorem~\ref{thm:malle_thm}. 
In order to prove Proposition~\ref{weakap}, 
we must compute the groups $\operatorname{H}^1(k,\Pic \overline{X})$ where $X$ is a smooth compactification of the norm one torus $R^1_{K/k}\bbG_m$ and $K/k$ is contained in a Galois extension $L/k$ with $\Gal(L/k)=G\cong  S_4, S_5,A_4,A_5$. 
One method to achieve this is via the identification $\operatorname{H}^1(k,\Pic \overline{X}) = \operatorname{H}^1(G,F_{G/H})$ in \eqref{cs_flasque}, where $H=\Gal(L/K)$. In \cite[\S5]{hoshi}, Hoshi and Yamasaki developed several algorithms in the computer algebra system GAP \cite{gap} to construct flasque resolutions. Using this work, one can compute the invariant $\operatorname{H}^1(G,F_{G/H})$ for low-degree field extensions, see e.g.~\cite[\S4]{macedo} for some examples. This computational method can also be used to prove Theorem \ref{thm:malle_thm}:

\begin{proof}[Proof of Theorem~\ref{thm:malle_thm}]

Note that an extension $K/k$ of degree $n$ is a $(G,H)$-extension (as defined on p.~\pageref{page_gh_extension}), where $G$ is a transitive subgroup of $S_n$ and $H$ is an index $n$ subgroup of $G$. 
Since there are a finite number of possibilities for $G$ and $H$, one can compute all possibilities for $\operatorname{H}^1(G,F_{G/H})$ using the aforementioned algorithms. If $\operatorname{H}^1(G,F_{G/H})=0$, then both the HNP for $K/k$ and weak approximation for $R^1_{K/k} \mathbb{G}_m$ hold by Theorem~\ref{thmvosk} and the isomorphism~\eqref{cs_flasque}. If $\operatorname{H}^1(G,F_{G/H})\neq 0$, one can compute the integer $\alpha(G)$ of Malle's conjecture and for every such case one obtains $\alpha(G) > 1$. Thus, if the conjecture holds, then the number of degree $n$ extensions with discriminant bounded by $X$ and for which the HNP or weak approximation fails is $o(X)$. The result then follows by observing that Malle's conjecture also implies that the number of degree $n$ extensions of $k$ with discriminant bounded by $X$ is asymptotically at least $c(k,n)X$ for some positive constant $c(k,n)$.\end{proof}

\begin{remark}\label{malle_proof_rmk}
We list a few observations about Theorem~\ref{thm:malle_thm} and its proof.
\begin{enumerate}[leftmargin=*, label=(\roman{*})]
    \item The reason for excluding degrees $n=8$ and $12$ is that in these cases there are pairs $(G,H)$, where $G \leq S_n$ is a transitive subgroup and $H$ is an index $n$ subgroup of $ G$, such that $\operatorname{H}^1(G,F_{G/H})$ is non-trivial and $\alpha(G)=1$. A more detailed analysis of the proportion of these $(G,H)$-extensions for which the local-global principles fail is needed in these cases. 
    \item Computing the values of $\alpha(G)$ for all transitive subgroups $G$ of $S_n$ with $\operatorname{H}^1(G, F_{G/H})\neq 0$ and
    $[G:H]=n$ 
    yields an upper bound (conditional on Malle's conjecture) on the number of degree $n$ extensions for which the HNP (or weak approximation for the norm one torus) fails. For example, the number of degree $14$ extensions of $k$ for which the HNP (or weak approximation for the norm one torus) fails is $\ll_{k,\epsilon}x^{\frac{1}{6}+\epsilon}$,
    when ordered by discriminant. 
    
  \item\label{malle_proof_rmk_item3} In the statement of Theorem \ref{thm:malle_thm} it suffices to assume Malle's conjecture only for the few transitive subgroups $G \leq S_n$ containing an index $n$ subgroup $H$ such that $\operatorname{H}^1(G,F_{G/H})$ is not trivial. Indeed, the assumption for all $G \leq S_n$ was used solely to show that the number of degree $n$ extensions of $k$ with discriminant bounded by $X$ is $\gg_{k,n} X$. For $n \leq 15$ composite, one can use an argument similar to that of \cite[pp.~723--724]{ellenberg} for $n$ even and the results of Datskovsky and Wright \cite{datskovsky} for cubics and of Bhargava, Shankar and Wang \cite{Bhargava2} for quintics to prove the aforementioned result. Finally, for $n$ prime we do not need any assumptions as the HNP for $K/k$ and weak approximation for $R^1_{K/k} \mathbb{G}_m$ always hold for extensions of prime degree (see \cite[Proposition 9.1 and Remark 9.3]{coll2}).
\item To simplify the statement we only presented results for degree $n\leq 15$ but one can obtain results for higher degrees in a similar way. However, Hoshi and Yamasaki's algorithms require one to embed the Galois group $G$ as a transitive subgroup of $S_n$, whereupon one quickly reaches the limit of the databases of such groups stored in computational algebra systems such as GAP. To overcome this problem, one can employ a modification of Hoshi and Yamasaki's algorithms written by the first author and made available at \cite{macedo_code}.
\end{enumerate}
\end{remark}

For most of our computational results, we did not employ the algorithms of Hoshi and Yamasaki and
instead used the formula of Theorem \ref{mid_gp_generalized} which expresses $\operatorname{H}^1(k,\Pic \overline{X})$ in terms of generalized representation groups of $G$ and $H$. We implemented this formula, along with the simplification afforded by Corollary~\ref{cor:mid_gp_general_intro}, 
as an algorithm in GAP (see \cite{macedo_code}). For the groups $G$ of Proposition~\ref{main}, our calculations were further simplified thanks to Theorem~\ref{main0}. The outcome of our computations appears in Tables \ref{a4_table} -- \ref{a7_table} of the Appendix. Proposition~\ref{weakap} follows immediately.

It is noteworthy to compare the two computational methods described above.
The approach based on Theorem~\ref{mid_gp_generalized} involves the computation of the focal subgroup $\Phi^{G}(H)$, which is generally fast for small subgroups $H$ but impractical for large ones. On the contrary, Hoshi and Yamasaki's method using flasque resolutions deals only with the $G$-module $J_{G/H}$, whose $\Z$-rank $\frac{|G|}{|H|}-1$ decreases as $|H|$ grows. 
Therefore this technique (or the modified version available at \cite{macedo_code}) is usually preferable when $H$ is large. In general, a combination of the two algorithms is the most convenient way to compute $\operatorname{H}^1(k,\Pic \overline{X})$ for all subgroups of a fixed group $G$.

We now move on to the proof of Proposition~\ref{main}. We use Theorem~\ref{main0} to reduce our task to the calculation of the first obstruction $\mathfrak{F}(L/K/k)$ and the knot group $\mathfrak{K}(L/k)$ for the Galois extension $L/k$. The former is achieved using the algorithm described in Remark~\ref{1obs_alg}. The computation of $\mathfrak{K}(L/k)$ follows from a simple application of isomorphism \eqref{eq:tate_knot} of Theorem~\ref{Tate} together with Proposition~\ref{prop:noptorsion} and Lemma~\ref{Res isom} below. Note that if $G=A_4,S_4,A_5$ or $S_5$ then $\operatorname{H}^3(G,\bbZ)\cong\bbZ/2$.

\begin{lemma}\label{Res isom}Let $G = A_4, S_4, A_5, S_5, A_6$ or $A_7$ and let $A$ be a copy of $V_4$ inside $G$. 
Then \[\Res^G_A:\operatorname{H}^{3}(G,\bbZ)_{(2)}\to \operatorname{H}^{3}(A,\bbZ)\] is an isomorphism.
\end{lemma}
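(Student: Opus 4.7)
Both $\operatorname{H}^3(V_4,\bbZ)$ and $\operatorname{H}^3(G,\bbZ)_{(2)}$ are cyclic of order $2$: the first by the Künneth formula applied to $V_4 \cong C_2 \times C_2$ (only the $\operatorname{Tor}(\bbZ/2,\bbZ/2)$ term contributes in degree $3$), and the second from the classical values of the Schur multipliers $\operatorname{H}_2(A_n,\bbZ)$ and $\operatorname{H}_2(S_n,\bbZ)$ combined with the Pontryagin duality $\operatorname{H}^3(G,\bbZ) \cong \operatorname{H}_2(G,\bbZ)^{\sim}$. Hence $\Res^G_A$ is an isomorphism if and only if it is non-zero.

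For $G = A_4$ or $A_5$, the subgroup $A$ is a Sylow $2$-subgroup of $G$, so Proposition~\ref{prop:noptorsion} directly provides the required injectivity on the $2$-primary part.

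For the remaining cases I would exploit the Pontryagin duality above, under which $\Res^G_A$ is dual to the corestriction $\Cor: \hat{\operatorname{H}}^{-3}(A,\bbZ) \to \hat{\operatorname{H}}^{-3}(G,\bbZ)$ on the Schur multipliers, so it suffices to prove that $\Cor$ is surjective on the $2$-primary part. By Lemma~\ref{kernel_res}, this image equals $K \cap [\lambda^{-1}(A),\lambda^{-1}(A)]$ inside a generalized representation group $\overline{G}$, reducing the task to showing that a generator of $K_{(2)}$ lies in the derived subgroup of $\lambda^{-1}(A)$. For $G = S_4$ or $S_5$, I would take $\overline{G}$ as in Proposition~\ref{pres} (so that $K = \langle z \rangle$); the identity $z \in [\lambda^{-1}(A),\lambda^{-1}(A)]$ is exactly what was established in the proof of Lemma~\ref{aux_v4}, for every copy of $V_4$ in $G$. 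For $G = A_6$ or $A_7$, Proposition~\ref{pres} no longer applies, and the plan is to work in a central $C_2$-extension $\lambda\colon 2.A_n \to A_n$ realizing the $2$-primary part of the Schur multiplier $\bbZ/6$: all copies of $V_4$ in $A_n$ are $A_n$-conjugate, so it suffices to treat $A = \langle(12)(34),(13)(24)\rangle \subset A_4 \subset A_n$, and the standard identification $2.A_4 \cong \mathrm{SL}_2(\bbF_3)$ shows that $\lambda^{-1}(A) \cong Q_8$, whose derived subgroup is precisely the central $C_2$ generated by the $2$-torsion of $K$.

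The main obstacle will be the $A_6, A_7$ case, where Proposition~\ref{pres} does not cover things. One must bypass it by selecting a suitable Schur cover and verifying by direct computation inside the spin double cover that the preimage of $V_4$ is the non-abelian group $Q_8$, so that the $2$-part of $K$ indeed sits in the commutator subgroup of $\lambda^{-1}(A)$.
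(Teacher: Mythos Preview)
Your approach is correct for $G=A_4,A_5,S_4,S_5$, but there is a genuine gap in the $A_6,A_7$ case: the claim that ``all copies of $V_4$ in $A_n$ are $A_n$-conjugate'' is false for $n=6,7$. As Tables~\ref{a6_table} and~\ref{a7_table} show, there are two $A_n$-conjugacy classes of $V_4$ in each of $A_6$ and $A_7$, represented by $V_4a=\langle(1,2)(3,4),(1,3)(2,4)\rangle$ and $V_4b=\langle(1,2)(5,6),(1,2)(3,4)\rangle$. Your reduction to the standard Klein four-group inside $A_4$, and hence the $2.A_4\cong\mathrm{SL}_2(\bbF_3)$ computation, only treats $V_4a$; the other class $V_4b$ does not sit inside an $A_4$ acting on four points, so your argument does not cover it as written.

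The paper's proof avoids this case distinction entirely and is much shorter. By Sylow theory, any copy $A\cong V_4$ in $G$ is contained in some Sylow $2$-subgroup $G_2$. For $G=A_4,A_5$ one has $G_2=A$ and Proposition~\ref{prop:noptorsion} already gives the result. For $G=S_4,S_5,A_6,A_7$ one has $G_2\cong D_4$, so $\Res^G_A$ factors as $\Res^{D_4}_A\circ\Res^G_{D_4}$; the second map is injective on $2$-primary parts by Proposition~\ref{prop:noptorsion}, and the first is injective because $\Res^{D_4}_{V_4}:\operatorname{H}^3(D_4,\bbZ)\to\operatorname{H}^3(V_4,\bbZ)$ is injective (a single computation, valid for either $V_4$ inside $D_4$). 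Since both source and target are $\bbZ/2$, injectivity gives the isomorphism. This uniform $D_4$ argument handles all six groups and both conjugacy classes of $V_4$ at once, whereas your covering-group method, though valid in spirit, requires a separate verification for each conjugacy class of $V_4$.
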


\begin{proof}
Follows from the injectivity of $\Res^{D_4}_{V_4}:\operatorname{H}^{3}(D_4,\bbZ) \to \operatorname{H}^{3}(V_4,\bbZ)$ and Proposition \ref{prop:noptorsion}.
\end{proof}

More generally, the knot group of any Galois extension $L/k$ can be computed by combining the isomorphism \eqref{eq:tate_knot} of Theorem~\ref{Tate} and Lemma \ref{kernel_res}. We used these two results to implement an algorithm (available at \cite{macedo_code}) in GAP that, given the group $\Gal(L/k)$ and the list $l$ of decomposition groups $D_v$ at the ramified places, returns the knot group $\mathfrak{K}(L/k)$. 
We end this subsection by proving Corollary~\ref{cor:A4}.
\begin{proof}[Proof of Corollary~\ref{cor:A4}]
The isomorphisms $\mathfrak{K}(K/k)\cong \mathfrak{K}(L/k)$ and $\mathfrak{K}(L/k)\cong \mathfrak{K}(L/F)$ follow from Proposition~\ref{main} and isomorphism \eqref{eq:tate_knot} of Theorem~\ref{Tate}. The statement about weak approximation follows from the isomorphism  $\mathfrak{K}(K/k)\cong \mathfrak{K}(L/F)$, Voskresenski\u{\i}'s exact sequence \eqref{eq:Voskexact} and the fact that the middle group in this sequence is $\Z/2$ for both $R^1_{K/k} \bbG_m$ and $R^1_{L/F} \bbG_m$.
\end{proof}

\subsection{The $A_6$ and $A_7$ cases}\label{sec:casea6}

In this section we 
give a complete characterization of the Hasse norm principle and weak approximation for the norm one tori associated to $A_6$ and $A_7$ extensions.
Various subgroups of $A_6$ and $A_7$ are given by semidirect products of smaller subgroups. For brevity, we omit the precise construction of these semidirect products from the main text and refer the reader to Tables \ref{a6_table} and \ref{a7_table} of the Appendix containing the generators of these subgroups. Our main result is the following:

\begin{proposition}\label{thm:A6A7}

Suppose that $G$ is isomorphic to $A_6$ or $A_7$. 
Then $\mathfrak{K}(K/k)\hookrightarrow C_6$ and

\begin{itemize}
    \item $\mathfrak{K}(K/k)_{(2)}=1\iff \begin{cases} V_4\hookrightarrow H; \text{ or} \\ 
    C_4 \hookrightarrow H \text{ and } \exists\ v \text{ such that } D_4\hookrightarrow D_v; \text{ or}\\
    4  \nmid |H| \text{ and } \exists\ v \text{ such that } V_4\hookrightarrow D_v.
    \end{cases}$
    
    \item $\mathfrak{K}(K/k)_{(3)}=1\iff \begin{cases}
    C_3\hookrightarrow H; \text{ or}\\
    \exists\ v \text{ such that } C_3\times C_3\hookrightarrow D_v.
    \end{cases}$
\end{itemize}

\end{proposition}

We start by settling the Galois case of this proposition.

\begin{proposition}\label{a6_gal}

If $L/k$ is Galois with Galois group $A_6$ or $A_7$, then $\mathfrak{K}(L/k) \hookrightarrow C_6$ and

\begin{itemize}
    \item $\mathfrak{K}(L/k)_{(2)}=1$ if and only if there exists a place $v$ of $k$ such that $V_4 \hookrightarrow D_v$;
    \item $\mathfrak{K}(L/k)_{(3)}=1$ if and only if there exists a place $v$ of $k$ such that $C_3 \times C_3 \hookrightarrow D_v$.
    
\end{itemize}

\end{proposition}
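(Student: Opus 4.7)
The plan is to apply Tate's description of the knot group (Theorem~\ref{Tate}), which yields
$$\mathfrak{K}(L/k)^{\sim}\cong \Ker\left(\operatorname{H}^3(G,\bbZ)\xrightarrow{\Res}\prod_{v\in\Omega_k}\operatorname{H}^3(D_v,\bbZ)\right),$$
and to analyze the kernel prime-by-prime. Since $\operatorname{H}^3(A_n,\bbZ)\cong \bbZ/6$ for $n=6,7$ (as noted in the paper), the embedding $\mathfrak{K}(L/k)\hookrightarrow C_6$ is immediate, and the two statements on $p$-primary parts can be established independently for $p=2$ and $p=3$.

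For the $2$-primary part, the key input is Lemma~\ref{Res isom}: the restriction map $\operatorname{H}^3(G,\bbZ)_{(2)}\to\operatorname{H}^3(V_4,\bbZ)$ is an isomorphism for any copy of $V_4$ in $G$. If some $D_v$ contains $V_4$, the composition $\operatorname{H}^3(G,\bbZ)_{(2)}\to\operatorname{H}^3(D_v,\bbZ)_{(2)}\to\operatorname{H}^3(V_4,\bbZ)$ factors through $V_4$ and is an isomorphism, so the map to $\operatorname{H}^3(D_v,\bbZ)$ is injective on $2$-parts and $\mathfrak{K}(L/k)_{(2)}=1$. For the converse, I will use that the Sylow $2$-subgroup of $A_6$ and $A_7$ is $D_4$, whose proper subgroups that do not contain $V_4$ are precisely the cyclic subgroups ($1$, $C_2$, $C_4$). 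Hence if no $D_v$ contains $V_4$, every $D_v$ has cyclic Sylow $2$-subgroup, and Proposition~\ref{prop:noptorsion} shows $\operatorname{H}^3(D_v,\bbZ)_{(2)}$ injects into $\operatorname{H}^3(C,\bbZ)=0$ (for $C$ cyclic); consequently the $2$-part of the restriction map vanishes, forcing $\mathfrak{K}(L/k)_{(2)}\cong \bbZ/2\neq 1$.

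For the $3$-primary part, the Sylow $3$-subgroup of both $A_6$ and $A_7$ is elementary abelian of order $9$, so the only $3$-subgroups (up to conjugacy) are $1$, $C_3$, and $C_3\times C_3$. A Künneth computation gives $\operatorname{H}^3(C_3\times C_3,\bbZ)\cong \bbZ/3$, while $\operatorname{H}^3(C_3,\bbZ)=0$. By Proposition~\ref{prop:noptorsion}, the restriction $\operatorname{H}^3(G,\bbZ)_{(3)}\hookrightarrow \operatorname{H}^3(C_3\times C_3,\bbZ)$ is an injection of two copies of $\bbZ/3$ and hence an isomorphism. Therefore, if some $D_v$ contains $C_3\times C_3$, restriction to $D_v$ is injective on $3$-parts, so $\mathfrak{K}(L/k)_{(3)}=1$; conversely, if no $D_v$ contains $C_3\times C_3$, every $D_v$ has cyclic Sylow $3$-subgroup, whence $\operatorname{H}^3(D_v,\bbZ)_{(3)}=0$ and $\mathfrak{K}(L/k)_{(3)}\cong\bbZ/3\neq 1$.

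The main technical step is the subgroup classification: verifying that non-cyclic $2$-subgroups of $A_6$ and $A_7$ all contain $V_4$ (which reduces to analysing the subgroup lattice of $D_4$, using that $D_4$ is the Sylow $2$-subgroup), and that the Sylow $3$-subgroup is $C_3\times C_3$ with the expected third cohomology. These are routine facts about $A_6$ and $A_7$, so no serious obstacle is expected; the cohomological computations then fall out of Proposition~\ref{prop:noptorsion} and Lemma~\ref{Res isom}.
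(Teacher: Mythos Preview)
Your proof is correct and follows exactly the approach of the paper, which simply cites Theorem~\ref{Tate}, Proposition~\ref{prop:noptorsion} and Lemma~\ref{Res isom} as an immediate consequence. You have merely unpacked the details that the paper leaves implicit, including the analogue of Lemma~\ref{Res isom} for the $3$-part (via the Sylow $3$-subgroup $C_3\times C_3$) and the observation that the relevant $p$-subgroups not containing $V_4$ (respectively $C_3\times C_3$) are cyclic.
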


\begin{proof}
This follows from isomorphism \eqref{eq:tate_knot} of Theorem~\ref{Tate}, Proposition~\ref{prop:noptorsion} and Lemma~\ref{Res isom}.
\end{proof}

We now solve the non-Galois case. As detailed in Section \ref{sec:comp_met}, we can compute the invariant $\operatorname{H}^1(k,\operatorname{Pic}\overline{X}) = \operatorname{H}^1(G,F_{G/H})$ for every possibility of $H=\Gal(L/K)$. The result of this computation is given in Tables \ref{a6_table} and \ref{a7_table} of the Appendix and it proves the following:

\begin{proposition}\label{thm:WAA6}
Suppose that $G$ is isomorphic to $A_6$ or $A_7$. Then
$\operatorname{H}^1(k,\Pic \overline{X})\hookrightarrow\Z/6$ and
\begin{itemize}
    \item $\operatorname{H}^1(k,\Pic \overline{X})_{(2)}=0 $ if and only if $V_4 \hookrightarrow H$;
    \item $\operatorname{H}^1(k,\Pic \overline{X})_{(3)}=0 $ if and only if $C_3 \hookrightarrow H$.
\end{itemize}
\end{proposition}

Building upon this proposition, we establish several results concerning the knot group $\mathfrak{K}(K/k)$. In particular, we immediately see that the invariant $\operatorname{H}^1(G,F_{G/H})$ is trivial if $H$ is isomorphic to $A_4$, $C_2 \times C_6$, $D_6$, $(C_6 \times C_2) \rtimes C_2$, $S_4$, $A_4 \times C_3$, $A_5$, $(A_4 \times C_3) \rtimes C_2$, $S_5$, $\operatorname{PSL}(3,2)$ or $A_6$. Thus, by Theorem~\ref{thmvosk} and isomorphism~\eqref{cs_flasque}, both groups $A(T)$ and $\mathfrak{K}(K/k)$ are trivial in all these cases.

Next, we investigate the cases where the first obstruction to the HNP for the tower $L/K/k$ coincides with the total obstruction, i.e.~the knot group. 
 
 \begin{proposition}\label{prop:6dividesH}
 If $6$ divides $|H|$, then $\mathfrak{K}(K/k)=\mathfrak{F}(L/K/k)$.
 \end{proposition}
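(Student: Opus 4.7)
The plan is to apply Drakokhrust–Platonov's Theorem \ref{thm3DP} with two auxiliary pairs: one built from a Klein four-group $A\leq G$ to handle the $2$-primary part of $\hat{\operatorname{H}}^{-3}(G,\bbZ)\cong C_6$, and one built from a Sylow $3$-subgroup $B\cong C_3\times C_3$ of $G$ to handle the $3$-primary part. Since $6\mid|H|$, fix $h_2,h_3\in H$ of orders $2$ and $3$. In $A_6$ and $A_7$ every involution is a double transposition $(a\,b)(c\,d)$, which lies in $A:=\langle(a\,b)(c\,d),(a\,c)(b\,d)\rangle\cong V_4$, and every order-$3$ element lies in a Sylow $3$-subgroup of $G$, which has order $9$ and is thus a copy $B\cong C_3\times C_3$. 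Choose $A$ and $B$ so that $h_2\in A$ and $h_3\in B$, and set $G_1=A$, $G_2=B$, $H_i=H\cap G_i$.

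Checking the HNP hypothesis of Theorem \ref{thm3DP} is easy: since $h_2\in A\cap H$, the extension $L^{H_1}/L^{G_1}$ has degree $[A:A\cap H]\leq 2$, so the HNP holds by \ref{HNT}; since $h_3\in B\cap H$, $[L^{H_2}:L^{G_2}]=[B:B\cap H]\in\{1,3\}$, so the HNP holds by \ref{prime}. It remains to show that
$$\Cor^G_A\oplus\Cor^G_B:\hat{\operatorname{H}}^{-3}(A,\bbZ)\oplus\hat{\operatorname{H}}^{-3}(B,\bbZ)\longrightarrow\hat{\operatorname{H}}^{-3}(G,\bbZ)$$
is surjective. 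Fix a generalized representation group $\overline{G}$ of $G$ with projection $\lambda:\overline{G}\to G$ and base normal subgroup $K$; taking $\overline{G}$ to be the Schur cover $6.A_n$, one has $K\cong C_6$ and $K\cap[\overline{G},\overline{G}]=K$. By Lemma \ref{kernel_res}, $\im\Cor^G_{G_i}\cong K\cap[\lambda^{-1}(G_i),\lambda^{-1}(G_i)]$. Inside $\overline{G}$, the $2$-Sylow of $\lambda^{-1}(A)$ is the quaternion group $Q_8$ (the nontrivial double cover of $V_4$ sitting inside $2.A_n$), whose commutator subgroup is the $C_2$ summand of $K$, while the $3$-part of $\lambda^{-1}(A)$ is abelian; thus $\im\Cor^G_A$ is the $2$-part of $K$. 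Likewise, the $3$-Sylow of $\lambda^{-1}(B)$ is the Heisenberg group of order $27$ (the nontrivial triple cover of $C_3\times C_3$ inside $3.A_n$), whose commutator subgroup is the $C_3$ summand of $K$, while the $2$-part of $\lambda^{-1}(B)$ is abelian; thus $\im\Cor^G_B$ is the $3$-part of $K$. Together the two images span $K\cong C_6$, and Theorem \ref{thm3DP} yields $\mathfrak{K}(K/k)=\mathfrak{F}(L/K/k)$.

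I expect the main obstacle to be the final step, where one must identify the preimages $\lambda^{-1}(A)$ and $\lambda^{-1}(B)$ inside the Schur cover of $A_n$ and compute their commutator subgroups. This relies on standard facts about the binary and ternary covers $2.A_n$ and $3.A_n$ (namely, that the relevant $V_4$ lifts to $Q_8$ and that the relevant elementary abelian $C_3\times C_3$ lifts to the Heisenberg group), but the verification is ultimately a finite group-theoretic computation that can also be checked directly in GAP.
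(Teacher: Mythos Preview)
Your argument is correct and follows essentially the same strategy as the paper: apply Theorem~\ref{thm3DP} with $G_1\cong V_4$ and $G_2\cong C_3\times C_3$ chosen to meet $H$ nontrivially, check the HNP for the small extensions $L^{H_i}/L^{G_i}$, and then verify surjectivity of $\Cor^G_{G_1}\oplus\Cor^G_{G_2}$ onto $\hat{\operatorname{H}}^{-3}(G,\bbZ)\cong\bbZ/6$.

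The only difference is in how that last surjectivity is established. You compute inside the Schur cover $6.A_n$ via Lemma~\ref{kernel_res}, identifying $\lambda^{-1}(A)$ and $\lambda^{-1}(B)$ and their commutator subgroups. The paper instead dualises: since restriction and corestriction are dual, it suffices to know that $\Res^G_{G_1}$ is injective on the $2$-part (Lemma~\ref{Res isom}) and that $\Res^G_{G_2}$ is injective on the $3$-part (Proposition~\ref{prop:noptorsion}, as $G_2$ is a Sylow $3$-subgroup). This avoids any explicit computation in the cover. Your route is perfectly valid, but note that your Heisenberg claim for $\lambda^{-1}(B)$ is exactly equivalent, via Lemma~\ref{kernel_res}, to the injectivity of $\Res^G_{G_2}$ on the $3$-part---so the duality argument gives you that fact for free.
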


 \begin{proof}
Let $G_1$ be a copy of $V_4$ inside $G$ such that $ H \cap G_1 \neq 1$ and $G_2$ a copy of $C_3 \times C_3$ inside $G$ such that $  H \cap G_2 \neq 1$. Set $H_i = H \cap G_i$ for $i=1,2$ and notice that the HNP holds for the extensions $L^{H_i}/L^{G_i}$ as they are of degree at most 3. Using Proposition~\ref{prop:noptorsion}, Lemma~\ref{Res isom} and duality, we find that the maps $\Cor^{G}_{G_1} : \hat{\operatorname{H}}^{-3}(G_1, \bbZ) \to \hat{\operatorname{H}}^{-3}(G, \bbZ)_{(2)}$ and $\Cor^{G}_{G_2} : \hat{\operatorname{H}}^{-3}(G_2, \bbZ) \to \hat{\operatorname{H}}^{-3}(G, \bbZ)_{(3)}$ are surjective. Hence
\[\Cor^{G}_{G_1} \oplus \Cor^{G}_{G_2} : \hat{\operatorname{H}}^{-3}(G_1, \bbZ) \oplus \hat{\operatorname{H}}^{-3}(G_2, \bbZ)  \to \hat{\operatorname{H}}^{-3}(G, \bbZ) \]
is surjective (recall that $\hat{\operatorname{H}}^{-3}(G, \bbZ) \cong \Z/6$) and therefore $\mathfrak{F}(L/K/k)=\mathfrak{K}(K/k)$ by Theorem~\ref{thm3DP}.
 \end{proof}

As a consequence of this result, one can use the GAP function \code{1obs} described in Remark \ref{1obs_alg} to computationally solve the cases where $6 \mid |H|$ and $\operatorname{H}^1(G,F_{G/H}) \neq 0$. The remaining possibilities for $H$ are dealt with in the two following results.

\begin{proposition}\label{a6_v4d4}
\begin{enumerate}[label=(\roman{*}), leftmargin=*]
    \item \label{V4}If $H \cong V_4$ or $ D_4$, then $\mathfrak{K}(K/k) \cong \mathfrak{K}(L/k)_{(3)}$;
    \item \label{C5}If $H \cong C_5$ or $C_7$, then $\mathfrak{K}(K/k) \cong \mathfrak{K}(L/k)$;
    \item \label{C3}If $H \cong C_3,C_3 \times C_3$ or $C_7 \rtimes C_3$, then $\mathfrak{K}(K/k) \cong \mathfrak{K}(L/k)_{(2)} $.
\end{enumerate}
    
\end{proposition}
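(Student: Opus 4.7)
The plan is to exploit the fact that, for $G\cong A_6$ or $A_7$, both $\mathfrak{K}(K/k)$ and $\mathfrak{K}(L/k)$ embed into cyclic groups of order $6$. Indeed, Proposition~\ref{a6_gal} (via Theorem~\ref{Tate}) shows $\mathfrak{K}(L/k)\hookrightarrow \operatorname{H}^3(G,\bbZ)\cong \bbZ/6$, while Theorem~\ref{thmvosk} together with Theorem~\ref{thm:WAA6} shows that $\mathfrak{K}(K/k)$ is a quotient of $\operatorname{H}^1(k,\operatorname{Pic}\overline{X})^{\sim}$, which is itself contained in $(\bbZ/6)^{\sim}\cong \bbZ/6$. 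In particular, both $\mathfrak{K}(K/k)$ and $\mathfrak{K}(L/k)$ have trivial $p$-primary parts for every prime $p\geq 5$, so it suffices to compare them at $p=2$ and $p=3$.

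At each relevant prime I will use exactly one of two tools. When $p\nmid |H|$, Corollary~\ref{cor:knot_pprimetoH} gives an isomorphism $\mathfrak{K}(K/k)_{(p)}\cong \mathfrak{K}(L/k)_{(p)}$ directly. When instead $p\mid |H|$ but either $V_4\hookrightarrow H$ (for $p=2$) or $C_3\hookrightarrow H$ (for $p=3$), Theorem~\ref{thm:WAA6} gives $\operatorname{H}^1(k,\operatorname{Pic}\overline{X})_{(p)}=0$, and then \eqref{eq:Vosk} forces $\mathfrak{K}(K/k)_{(p)}=0$.

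With these preliminaries, the three cases fall out immediately. For~\ref{V4}, $|H|\in\{4,8\}$ is a power of $2$, so Corollary~\ref{cor:knot_pprimetoH} gives $\mathfrak{K}(K/k)_{(3)}\cong \mathfrak{K}(L/k)_{(3)}$; since $V_4\hookrightarrow H$, the $2$-primary part of $\mathfrak{K}(K/k)$ vanishes, yielding $\mathfrak{K}(K/k)\cong \mathfrak{K}(L/k)_{(3)}$. For~\ref{C5}, $|H|\in\{5,7\}$ is coprime to $6$, so Corollary~\ref{cor:knot_pprimetoH} applies at both $p=2$ and $p=3$ and gives $\mathfrak{K}(K/k)\cong \mathfrak{K}(L/k)$. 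For~\ref{C3}, $|H|\in\{3,9,21\}$ is odd, so Corollary~\ref{cor:knot_pprimetoH} handles $p=2$; and since $C_3\hookrightarrow H$ in all three subcases, Theorem~\ref{thm:WAA6} kills the $3$-primary part, yielding $\mathfrak{K}(K/k)\cong \mathfrak{K}(L/k)_{(2)}$. The prime $7$ arising in $|C_7\rtimes C_3|=21$ needs no separate treatment because both sides already vanish at $p=7$ by the reduction in the first paragraph.

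There is no real obstacle here once one notices the embedding into $\bbZ/6$ on both sides; the argument is essentially bookkeeping organised by prime and by which subgroup condition of Theorem~\ref{thm:WAA6} is satisfied in each subcase.
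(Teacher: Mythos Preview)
Your proof is correct and follows essentially the same approach as the paper: bound $\mathfrak{K}(K/k)$ via the surjection from $\operatorname{H}^1(k,\Pic\overline{X})^{\sim}$ (the paper cites the table values directly rather than Theorem~\ref{thm:WAA6}, but these are equivalent), and then apply Corollary~\ref{cor:knot_pprimetoH} at the primes not dividing $|H|$. One cosmetic remark: your phrase ``contained in $(\bbZ/6)^{\sim}$'' should strictly read ``a quotient of $\bbZ/6$'', though for a cyclic group this makes no difference.
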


\begin{proof}

We prove only \ref{V4} (\ref{C5} and \ref{C3} follow analogously). In this case $\operatorname{H}^1(G,F_{G/H}) = \Z/3$ (see Tables \ref{a6_table} and \ref{a7_table} of the Appendix) and thus $  \Z/3  \twoheadrightarrow \mathfrak{K}(K/k)$ by Theorem~\ref{thmvosk} and isomorphism~\eqref{cs_flasque}. The result now follows by Theorem~\ref{mid_gp_general_norm}, noting that ${d=[L:K]}=4$ or $8$ is coprime to $3$.
\end{proof}

 \begin{proposition}

\begin{enumerate}[label=(\roman{*}), leftmargin=*]
\item \label{C2}If $H \cong C_2$ or $D_5$, then $\mathfrak{K}(K/k) \cong \mathfrak{K}(L/k)$;
    \item \label{C4}If $H \cong C_4$ or $C_5 \rtimes C_4$, then \[\mathfrak{K}(K/k)  \cong \mathfrak{K}(L/k)_{(3)} \times \mathfrak{K}(M/k) \cong \mathfrak{K}(L/k)_{(3)} \times \mathfrak{F}(L/M/k),\] 
    \noindent where $M$ is the fixed field of a copy of $(C_3 \times C_3) \rtimes C_4$ inside $G$ containing $H_2 \cong C_4$. 
    \end{enumerate}
 \end{proposition}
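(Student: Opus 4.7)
The plan is to leverage the fact that, by Tables~\ref{a6_table} and \ref{a7_table} together with Theorems~\ref{thmvosk} and \ref{thmcs}, $\operatorname{H}^1(G, F_{G/H}) \cong \Z/6$ in all four cases. Thus $\mathfrak{K}(K/k)$ is a quotient of $\Z/6$ and splits as $\mathfrak{K}(K/k)_{(2)} \times \mathfrak{K}(K/k)_{(3)}$. The $3$-primary part is handled immediately by Corollary~\ref{cor:knot_pprimetoH}: since $|H| \in \{2, 4, 10, 20\}$ is coprime to $3$, we have $\mathfrak{K}(K/k)_{(3)} \cong \mathfrak{K}(L/k)_{(3)}$. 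All the remaining content lies in the $2$-primary part.

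For part (ii), my plan is to pass down through a chain of intermediate fields to a situation where Proposition~\ref{prop:6dividesH} applies. With $H_2 \cong C_4$ a Sylow-$2$ subgroup of $H$ and $K_2 := L^{H_2}$, Theorem~\ref{mid_gp_general_intro} gives $\mathfrak{K}(K/k)_{(2)} \cong \mathfrak{K}(K_2/k)_{(2)}$. Taking $H_M := \Gal(L/M)$ as in the statement, one has $H_2 \subset H_M$ and $[K_2:M] = 9$, so Corollary~\ref{cor:knot_pprimetoH} yields $\mathfrak{K}(K_2/k)_{(2)} \cong \mathfrak{K}(M/k)_{(2)}$. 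Since $C_3 \hookrightarrow H_M$ but $V_4 \not\hookrightarrow H_M$, Theorem~\ref{thm:WAA6} gives $\operatorname{H}^1(G, F_{G/H_M}) \cong \Z/2$, so $\mathfrak{K}(M/k)$ is itself $2$-primary. Finally, as $6 \mid |H_M| = 36$, Proposition~\ref{prop:6dividesH} identifies $\mathfrak{K}(M/k)$ with $\mathfrak{F}(L/M/k)$; combining with the $3$-part analysis gives the desired decomposition.

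For part (i), the $H \cong D_5$ case reduces on the $2$-primary side to $H \cong C_2$ via Theorem~\ref{mid_gp_general_intro} (the Sylow-$2$ of $D_5$ being $C_2$). For $H = \langle h \rangle \cong C_2$, one embeds $\langle h \rangle$ in a copy of $S_3 \subset G$ (for instance, when $h = (1\,2)(3\,4)$, take $S_3 = \langle (1\,2\,5), h \rangle$) and sets $K' := L^{S_3} \subset K$, so that $[K : K'] = 3$. Corollary~\ref{cor:knot_pprimetoH} then gives $\mathfrak{K}(K/k)_{(2)} \cong \mathfrak{K}(K'/k)_{(2)}$. As $C_3 \hookrightarrow S_3$ and $V_4 \not\hookrightarrow S_3$, Theorem~\ref{thm:WAA6} makes $\mathfrak{K}(K'/k)$ $2$-primary, and Proposition~\ref{prop:6dividesH} (using $6 \mid |S_3| = 6$) identifies it with $\mathfrak{F}(L/K'/k)$.

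The main obstacle is the final step, namely proving $\mathfrak{F}(L/K'/k) \cong \mathfrak{K}(L/k)_{(2)}$. My plan is to apply Theorem~\ref{thm1DP}: a direct computation of commutators and focal subgroups yields $\Phi^G(S_3) = A_3 = [S_3, S_3]$, so the unramified contribution $\varphi_1(\Ker \psi_2^{nr})$ vanishes and $\mathfrak{F}(L/K'/k) = C_2 / \varphi_1(\Ker \psi_2^r)$. The remaining task is to match the vanishing of this quotient with the criterion of Proposition~\ref{a6_gal} characterising $\mathfrak{K}(L/k)_{(2)} = 0$ via the existence of a ramified place $v$ with $V_4 \hookrightarrow D_v$; this is the hard part, and I would carry it out by a case analysis of the decomposition groups $D_v$ through the Drakokhrust--Platonov diagram at each ramified place.
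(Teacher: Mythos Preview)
Your approach is correct and matches the paper's almost exactly. The paper likewise handles the $3$-primary part via Corollary~\ref{cor:knot_pprimetoH}, and for the $2$-primary part passes through $K_2 = L^{H_2}$ and then down to the fixed field of a subgroup of order divisible by $6$ (namely $S_3$ in case~(i) and $(C_3\times C_3)\rtimes C_4$ in case~(ii)), where Proposition~\ref{prop:6dividesH} reduces the knot group to the first obstruction. For the final identification $\mathfrak{F}(L/K'/k)\cong\mathfrak{K}(L/k)_{(2)}$ in case~(i), the paper does not write out the case analysis either: it simply invokes the GAP routine for the Drakokhrust--Platonov diagram (Remark~\ref{1obs_alg}), which is exactly the computation you outline.
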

 \begin{proof}First, note that in all cases $\mathfrak{K}(K/k)_{(3)}  \cong \mathfrak{K}(L/k)_{(3)}$, by Theorem~\ref{mid_gp_general_norm}. By Proposition~\ref{thm:WAA6} and Theorem~\ref{thmvosk}, it only remains to compute $\mathfrak{K}(K/k)_{(2)}$. For case~\ref{C2}, let $A$ be a copy of $S_3$ inside $G$ such that $A\cap H=H_2\cong C_2$ and let $F=L^A$ and $K_2=L^{H_2}$. Now Theorem~\ref{mid_gp_general_norm} shows that $\mathfrak{K}(K/k)_{(2)}\cong\mathfrak{K}(K_2/k)_{(2)}\cong \mathfrak{K}(F/k)_{(2)}$. Computing $\mathfrak{K}(F/k)_{(2)}$ using Proposition~\ref{prop:6dividesH} and the GAP function \code{1obs} described in Remark~\ref{1obs_alg} gives $\mathfrak{K}(F/k)_{(2)}\cong \mathfrak{K}(L/k)_{(2)}$, as required. For case~\ref{C4}, again let $K_2=L^{H_2}$. Then $\mathfrak{K}(K/k)_{(2)}\cong\mathfrak{K}(K_2/k)_{(2)}\cong \mathfrak{K}(M/k)_{(2)}$, by Theorem~\ref{mid_gp_general_norm}. Now Proposition~\ref{prop:6dividesH} gives $\mathfrak{K}(M/k)\cong \mathfrak{F}(L/M/k)$. Furthermore, Theorem~\ref{thmvosk} and isomorphism~\eqref{cs_flasque} combined with the results for $(C_3 \times C_3) \rtimes C_4$ in Tables~\ref{a6_table} and \ref{a7_table} of the Appendix show that $\mathfrak{K}(M/k)$ is $2$-torsion.\end{proof}

We have thus established the characterization of the HNP for an $A_6$ or $A_7$ extension given in Proposition \ref{thm:A6A7}. Using Proposition~\ref{thm:WAA6}, we can also give a full description of weak approximation. The local conditions controlling the validity of this principle are given in detail in the next theorem; they are a direct consequence of Propositions~\ref{thm:A6A7} and \ref{thm:WAA6} and  Voskresenski\u{\i}'s exact sequence \eqref{eq:Voskexact}.

 \begin{proposition}\label{thm:waconditionsA6}
Suppose that $G$ is isomorphic to $A_6$ or $A_7$.
\begin{itemize}

    \item If $V_4 \hookrightarrow H$ and $C_3 \hookrightarrow H$, then weak approximation holds for $T$.
    \item If $H \cong 1,C_2,C_5,C_7$ or $D_5$, then weak approximation holds for $T$ if and only if $V_4 \not\hookrightarrow D_v$ and $C_3 \times C_3 \not\hookrightarrow D_v$ for every place $v$ of $k$.
    \item If $H \cong C_4$ or $C_5 \rtimes C_4$, then weak approximation holds for $T$ if and only if $D_4 \not\hookrightarrow D_v$ and $C_3 \times C_3 \not\hookrightarrow D_v$ for every place $v$ of $k$.
    \item In all other cases, weak approximation holds for $T$ if and only if the HNP fails for $K/k$.
\end{itemize}
 \end{proposition}

\section{Examples}\label{sec:eg}

This section concerns the existence of number fields with prescribed Galois group for which the HNP holds, and the existence of those for which it fails. The main result is Theorem~\ref{examples_general_intro}. To prove it, we will use the notion of
$k$-adequate extensions, as introduced by Schacher in \cite{scha}.

\begin{definition}\label{def:admissible}
An extension $K/k$ of number fields is said to be \emph{$k$-adequate} if $K$ is a maximal subfield of a finite dimensional $k$-central division algebra. \end{definition}

A conjecture of Bartels (see \cite[p.~198]{Bartelsprime}) predicted that the HNP would hold for any $k$-adequate extension. This was proved by Gurak (see \cite[Theorem 3.1]{GurakcyclicSylow}) for Galois extensions, but disproved in general by Drakokhrust and Platonov (see \cite[\S9, \S11]{DP}). Given a Galois extension $L/k$, a result of Schacher (see \cite[Proposition 2.6]{scha}) shows that $L$ is $k$-adequate if and only if for every prime $p \mid [L:k]$ there are at least two places $v_1$ and $v_2$ of $k$ such that $D_{v_i}=\Gal(L_{v_i}/k_{v_i})$ contains a Sylow $p$-subgroup of $\Gal(L/k)$. This led Schacher to establish the following result:

\begin{theorem}\cite[Theorem 9.1]{scha}\label{thm:scha}
For any finite group $G$ there exists a number field $k$ and a \mbox{$k$-adequate} Galois extension 
$L/k$ with $\Gal(L/k) \cong G$.
\end{theorem}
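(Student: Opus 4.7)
The plan is to combine inverse Galois theory with local--global principles for prescribing decomposition groups. By Schacher's criterion recalled just before the statement, adequacy of $L/k$ is equivalent to requiring that for every prime $p \mid |G|$ there exist at least two places $v_1^{(p)}, v_2^{(p)}$ of $k$ whose decomposition groups $\Gal(L_{v_i^{(p)}}/k_{v_i^{(p)}})$ contain a Sylow $p$-subgroup $P_p$ of $G$. Thus the task reduces to realizing $G$ globally as a Galois group while simultaneously controlling the local behavior at finitely many places.

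First, I would realize $G$ abstractly as a Galois group over some number field $F_0$. Using the Cayley embedding $G \hookrightarrow S_{|G|}$ together with the classical realization of $S_n$ as a Galois group over $\mathbb{Q}$ (for instance, via Hilbert's irreducibility theorem applied to a polynomial with Galois group $S_n$ over $\mathbb{Q}(t)$), we obtain a Galois $S_n$-extension $N/\mathbb{Q}$; setting $F_0 := N^G$ gives a Galois $G$-extension $N/F_0$. Next, for each prime $p \mid |G|$, I would fix a local Galois extension $\widetilde{K_p}/K_p$ with group $P_p$; such extensions exist because every finite $p$-group is realizable as the Galois group of a suitable extension of a local field (using cyclotomic extensions together with iterated Kummer and Artin--Schreier towers to build up the tame and wild parts, respectively).

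The crucial final step is to manufacture a single pair $(k, L)$ realizing $G$ globally and matching the prescribed local data at two places per prime. The approach is to enlarge the base $F_0$ to a number field $k$ in which, for each $p$, at least two places have completion isomorphic to $K_p$, and then to solve a split embedding problem over $k$ so as to produce a $G$-extension $L/k$ whose completion at each chosen place is $\widetilde{K_p}/K_p$. The main obstacle will be simultaneously forcing the global Galois group to be exactly $G$ (no collapse to a proper subgroup, no enlargement via unwanted ramification) \emph{and} achieving the prescribed local behavior. This is handled by combining a Grunwald--Wang-style approximation argument for the local conditions (passing, if necessary, to a further extension containing enough roots of unity to bypass the classical exceptional case at $p=2$) with Hilbert's irreducibility theorem applied to a generic polynomial realizing $G$, so as to preserve the global Galois group under specialization.
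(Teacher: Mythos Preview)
The paper does not prove this theorem; it is quoted directly from Schacher \cite{scha} and used as a black box in the proof of Theorem~\ref{examples_general_intro}. So there is no in-paper argument to compare your sketch against.

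On the merits of your outline: the reduction via Schacher's local criterion is correct, and so is the observation that every Sylow $p$-subgroup $P_p$ is realizable as a Galois group over some $p$-adic field. The gap is in your final gluing step. A Grunwald--Wang approximation argument produces global extensions with prescribed local behaviour only when the target group is \emph{abelian}; for an arbitrary finite $G$ there is no such theorem, and the ``exceptional case at $p=2$'' you mention is a phenomenon specific to the abelian theory, not something relevant here. Likewise, invoking ``a generic polynomial realizing $G$'' is not legitimate in general: generic polynomials are not known to exist for arbitrary finite groups, and are known \emph{not} to exist for some (already $C_8$ over $\mathbb{Q}$). Hilbert irreducibility on its own preserves the global Galois group under specialization but does not let you prescribe completions at chosen places. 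In effect, after enlarging $F_0$ to $k$ you are attempting to solve the inverse Galois problem for $G$ over a \emph{fixed} base field with finitely many local constraints, which is open in general. What makes Schacher's theorem go through is precisely that $k$ is \emph{not} fixed in advance: one can build $k$ and $L$ simultaneously, enlarging the base as needed, rather than forcing local conditions via tools (Grunwald--Wang, generic polynomials) that do not apply to arbitrary $G$. If you want to repair your argument along the lines you suggest, you should keep working with $S_n$ rather than $G$: the generic degree-$n$ polynomial does realize $S_n$ over any base, Krasner-type arguments then give local control for the $S_n$-extension $N/\mathbb{Q}$, and only afterwards do you pass to $N/N^G$ and check that the decomposition groups there are large enough.
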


We can now prove Theorem~\ref{examples_general_intro}, which
generalizes \cite[Corollary 3.3]{GurakcyclicSylow} to non-normal extensions.

\begin{proof}[Proof of Theorem~\ref{examples_general_intro}]
\begin{enumerate}[label=(\roman{*}), leftmargin=*]
    \item\label{exmp_suc} Let $L/k$ be a $k$-adequate Galois extension with Galois group $G$ as given in Theorem~\ref{thm:scha}. Let $K=L^H$ and $T=R^1_{K/k}\mathbb{G}_m$. Recall that, by Theorem~\ref{Tate}, 
    $$\Sha(T)^\sim = \Ker \left( \operatorname{H}^2(G,J_{G/H}) \xrightarrow{\Res}  \prod_{v \in \Omega_k}\mathrm{H}^2(D_v,J_{G/H}) \right).$$
    Let $p$ be a prime dividing $|G|$ and let $D_v$ be a decomposition group containing a Sylow $p$-subgroup of $G$.
   Then Proposition \ref{prop:noptorsion} and the transitivity of restriction show that the map $$\operatorname{H}^2(G,J_{G/H})_{(p)} \xrightarrow{\Res}  \prod_{v \in \Omega_k}\mathrm{H}^2(D_v,J_{G/H})$$ is injective. 
    It follows that $\Sha(T) = 0$ and so $\mathfrak{K}(K/k)$ is trivial. The statement regarding weak approximation follows from Theorem~\ref{thmvosk} and isomorphism~\eqref{cs_flasque} of Theorem~\ref{thmcs}.
    \item\label{exmp_fail} By \cite[Lemma 6]{DP}, there is a Galois extension $L/k$ of number fields with $\Gal(L/k) \cong G$ such that every decomposition group is cyclic. Let $K=L^H$, $T=R^1_{K/k}\mathbb{G}_m$ and let $X$ be a smooth compactification of $T$. By \cite[\S 3, Theorem 6 and Corollary 2]{Vosk}, we have $A(T)=0$ and $\Sha(T) \cong \operatorname{H}^1(k,\Pic \overline{X})^\sim$. The result now follows from isomorphism~\eqref{cs_flasque} of Theorem~\ref{thmcs} and the fact that $\mathfrak{K}(K/k) = \Sha(T)$.\qedhere
\end{enumerate}\end{proof}

As a consequence of the work done in the proof of Theorem~\ref{examples_general_intro}, we can also obtain a version of Theorem~\ref{thm:an_sn_options} for the knot group and the defect of weak approximation. In what follows, let $L/K/k$ be a tower of number fields where $L/k$ is Galois with Galois group $G \cong A_n$ or $S_n$ and let $T=R^1_{K/k}\bbG_m$.

\begin{proposition}\label{prop:an_sn_options_knotgp}

\begin{enumerate}[label=(\roman{*})]
\item For $G\cong S_n$ the groups $\mathfrak{K}(K/k)$ and $A(T)$ are elementary abelian $2$-groups.
Moreover, every possibility for $\mathfrak{K}(K/k)$ is realised: given an elementary abelian $2$-group $A$, there exists $n\in\bbN$ and an extension of number fields $K/k$ whose normal closure has Galois group $S_n$ such that $\mathfrak{K}(K/k)\cong A$.
Likewise, every possibility for $A(T)$ is realised.
\item For $G\cong A_n$ the groups $\mathfrak{K}(K/k)$ and $A(T)$ are elementary abelian $2$-groups or isomorphic to $C_3$ or $C_6$. Again, every possibility for $\mathfrak{K}(K/k)$ is realised, and likewise for $A(T)$.
\end{enumerate}
\end{proposition}

\begin{proof}
This follows from Theorems~\ref{thm:an_sn_options}, \ref{examples_general_intro} and \ref{thmvosk}.
\end{proof}

To conclude this section, we provide examples of number fields over $\Q$ illustrating that in every case addressed by Propositions~\ref{main} and \ref{thm:A6A7}, there exists an extension of the desired type satisfying the HNP. Furthermore, in the cases where failure of the HNP is theoretically possible, we construct examples showing that failures actually occur (over at most a quadratic extension of $\Q$). When looking for such examples, \cite[Lemmas 18 and 20]{Stern} give useful practical conditions to test the local properties of Proposition~\ref{main}. Some of these extensions were found using the LMFBD database \cite{LMFDB} and all assertions below concerning Galois groups and ramification properties were verified using the computer algebra system \textsc{magma} \cite{magma}.   

\subsection{Successes}

\begin{itemize}[leftmargin=*]
   \item First consider $G=A_4$ or $S_4$. Let $L/\Q$ be the splitting field of the polynomial $f(x)$ defined as
\[ f(x) = \begin{cases*}
                   x^4 - 2x^3 + 2x^2 + 2 & if $G=A_4$, \\
                   x^4 - 2x^3 - 4x^2 - 6x - 2  & if $G=S_4$.
                 \end{cases*} \]
\noindent In both cases $L/\Q$ is a Galois extension with Galois group $G$ such that the decomposition group at the prime $2$ is the full Galois group. Applying Proposition \ref{main} we thus conclude that the HNP holds for $L/\Q$ as well as for any subextension $K/\Q$ contained in $L/\Q$.
\item For $G=A_5$, let $K=\Q(\alpha)$, where $\alpha$ is a root of the polynomial $x^5 -x^4 +2x^2 -2x +2$, and let $L/\Q$ be the normal closure of $K/\Q$. We have $\Gal(L/\Q) \cong A_5$ and there exists a prime $\mathfrak{p}$ of $K$ above $2$ with ramification index $4$, so it follows that $4 \mid |D_2|$. 
Since any subgroup of $A_5$ with order divisible by $4$ contains a copy of $V_4$ generated by two double transpositions, Proposition \ref{main} shows that the HNP holds for any subextension of $L/\Q$. 

\item For $G=S_5$, take $K=\Q(\alpha)$, where $\alpha$ is a root of the polynomial $x^{10} - 4x^9 - 24x^8 + 80x^7 + 174x^6 - 416x^5 - 372x^4 + 400x^3 + 370x^2 + 32x - 16$, and let $L/\Q$ be the normal closure of $K/\Q$. One can verify that $\Gal(L/\Q) \cong S_5$ and that there is a prime $\mathfrak{p}$ of $K$ above $2$ with ramification index $8$. By the same reasoning as in the $A_5$ case, $D_2$ contains a copy of $V_4$ generated by two double transpositions, and thus the HNP holds for any subextension of $L/\Q$ by Proposition \ref{main}.
\item For $G=A_6$, let $K=\Q(\alpha)$, where $\alpha$ is a root of the polynomial $x^{15} - 3x^{13} - 2x^{12} + 12x^{10} + 50x^9 - 54x^7 + 68x^6 - 162x^5 + 30x^4 - 67x^3 + 15x + 4$, and let $L/\Q$ be the normal closure of $K/\Q$. We have $\Gal(L/\Q) \cong A_6$ and there are primes $\mathfrak{p}$ and $\mathfrak{q}$ of $K$ above $2$ and $3$, respectively, such that $[K_\mathfrak{p}:\mathbb{Q}_2]=8$ and $[K_\mathfrak{q}:\mathbb{Q}_3]=9$. Since every subgroup of $A_6$ with order divisible by $8$ contains a copy of $D_4$, it follows that $D_4 \hookrightarrow D_2$. Analogously, we have $C_3 \times C_3 \hookrightarrow D_3$. Proposition \ref{thm:A6A7} then shows that the HNP holds for any subextension of $L/\Q$.
\item For $G=A_7$, let $L/\Q$ be the splitting field of the polynomial $x^7-3x^6-3x^5-x^4+12x^3+24x^2+16x+24$. We have $\Gal(L/\Q) \cong A_7$ and the primes $2$ and $3$ ramify in $L/\Q$. Let $M$ be the fixed field of the subgroup $\langle  (2,3)(5,7), (1,2)(4,5,6,7), (2,3)(5,6) \rangle \cong (A_4 \times C_3) \rtimes C_2 $ of $A_7$, a degree $35$ extension of $\Q$. Given a prime $p$, let $e=e(p)$ denote its ramification index and $f=f(p)$ its inertial degree in $L$. 
Note that if the decomposition $\mathcal{O}_{M} / p\mathcal{O}_{M} \cong  \bigoplus\limits_{i}\mathbb{F}_{p^ {f_i}}[t_i] / (t_i^ {e_i})$ holds for some $e_i,f_i \in \Z_{\geq 0}$, then $\lcm(e_i) \mid e$, $\lcm(f_i) \mid f$ and hence $\lcm(e_i) \cdot \lcm(f_i) \mid ef = |D_p|$. Factoring the prime $p=2$ in $\mathcal{O}_M$ gives $\lcm(e_i)=12$ and $\lcm(f_i)=2$, so $24 \mid |D_2|$. Since any subgroup of $A_7$ with order divisible by $24$ contains a copy of $D_4$, we conclude that $D_4 \hookrightarrow D_2$. Using the same reasoning with the prime $p=3$, we find $18 \mid |D_3|$ and consequently $D_3$ contains a copy of $C_3 \times C_3$. By Proposition \ref{thm:A6A7}, it follows that the HNP holds for any subextension of $L/\Q$.
\end{itemize}

\begin{remark}

An alternative approach to find examples of number fields satisfying the HNP and with Galois groups as in Propositions~\ref{main} and \ref{thm:A6A7} is to use $\Q$-adequate extensions. 
Indeed, examining the local conditions of Propositions \ref{main} and \ref{thm:A6A7}, it is clear that the HNP holds for any subextension of a $\Q$-adequate Galois extension with Galois group $G=A_4,S_4,A_5,S_5,A_6,A_7$. The existence of $\Q$-adequate extensions with prescribed Galois group $G$ has been studied by Schacher and others. For $G=A_4,S_4,A_5,S_5,A_6,A_7$, there exist $\Q$-adequate Galois extensions $L/\Q$ with $\Gal(L/\Q) \cong G$. We give some references for the interested reader. For $G=A_4, A_5$ see \cite{exmp_a4}, \cite{exmp_a5}, respectively. In fact, for these two groups stronger results hold. For $G=A_4$ there exist $k$-adequate Galois extensions with Galois group $A_4$ for \emph{any} global field $k$ of characteristic not equal to $2$ or $3$ (see \cite[Corollary 2.2]{exmp_a4}). For $G=A_5$, \cite[Theorem 1]{exmp_a5} constructs $k$-adequate Galois extensions with Galois group $A_5$ for any number field $k$ such that $\sqrt{-1} \not\in k$. For $G=S_4, S_5$ see \cite[Theorem 7.1]{scha}. The cases $G=A_6,A_7$ are treated in \cite{exmp_a6a7}. We chose not to pursue this approach because the polynomials defining the field extensions were rather cumbersome, particularly for $A_6$ and $A_7$.
\end{remark}

\subsection{Failures}

\begin{itemize}[leftmargin=*]

\item We start with the cases where $G$ is $A_4$ or $S_4$. Let $L/\Q$ be the splitting field of $f(x)$, where
\[ f(x) = \begin{cases*}
                   x^4 + 3x^2 - 7x + 4 & if $G=A_4$, \\
                   x^4-x^3-4x^2+x+2  & if $G=S_4$.
                 \end{cases*} \]
\noindent In both cases $L/\Q$ is a Galois extension with Galois group $G$ such that every decomposition group is cyclic. Therefore, Proposition~\ref{main}
shows that the HNP fails for any subextension of $L / k$ falling under case (i) or (ii) of Proposition~\ref{main}, i.e.~an extension where the HNP can theoretically fail. 

\item We now find examples for the $A_5$ and $S_5$ cases using work of Uchida \cite{uchida}. Examples for the $A_6$ and $A_7$ cases can be obtained in a manner analogous to the construction for $A_5$. Let $F/\bbQ$ be the splitting field of $f(x) = x^5-x+1$ and set $D= \operatorname{Disc}(f)=19 \cdot 151$. By \cite[Corollary and Theorem 2]{uchida}, $F / \Q(\sqrt{D})$ is an unramified Galois extension with Galois group $A_5$, while $F(\sqrt{2})/\Q(\sqrt{2  D})$ is an unramified Galois extension with Galois group $S_5$. If $G=A_5$ then set $L=F,k=\Q(\sqrt{D})$. If $G=S_5$ then set $L=F(\sqrt{2}),k=\Q(\sqrt{2  D})$. Let $K/k$ be a subextension of $L / k$ falling under case (i) or (ii) of Proposition~\ref{main}. 
Since $L/k$ is unramified, all its decomposition groups are cyclic, whereby the HNP fails for $K/k$ by the criterion of Proposition~\ref{main}.

A similar construction allows us to provide examples of unramified Galois $A_6$ and $A_7$ extensions. By Proposition \ref{thm:A6A7}, these extensions have knot groups isomorphic to $C_6$ and therefore the HNP fails for them. It is also possible to construct failures with knot group $C_2$ or $C_3$. Indeed, if $G=A_6$ or $A_7$, one can set $S=C_3 \times C_3$ in \cite[Lemma 6]{DP} in order to get a Galois extension of number fields with decomposition group $D_v=C_3 \times C_3$ for every ramified place $v$. Since the remaining places have cyclic decomposition groups, it follows from Proposition~\ref{thm:A6A7} that the knot group of this extension is $C_2$. An analogous construction choosing $S=D_4$ gives a Galois extension of number fields with knot group equal to $C_3$.

\end{itemize}

\section*{Appendix}
We present the results of the computer calculations outlined in Section \ref{sec:comp_met}. In the following tables, we distinguish non-conjugate but isomorphic groups with a letter in front of the isomorphism class. 

\setcounter{table}{0}

\begin{table}[H]\caption{}
\label{a4_table}
\begin{tabular}{ |c|c|c| }
 \hline
 \multicolumn{3}{|c|}{\boldmath$G=A_4$} \\
 \hline
 $[K:k]$ & $H$  & $\operatorname{H}^1(G,F_{G/H})$\\
 \hline
 12 & $1$ & $\Z/2$   \\  \hline
 6 & $C_2=\langle (1,2)(3,4) \rangle$ & $\Z/2$ \\  \hline
 4 & $C_3=\langle (1,2,3) \rangle$ & $\Z/2$ \\ \hline
 3 & $V_4=\langle (1,2)(3,4),(1,3)(2,4) \rangle$ & $0$ \\
 \hline
\end{tabular}
\end{table}

\begin{table}[H]\caption{}
\label{s4_table}
\begin{tabular}{ |c|c|c|  }
 \hline
 \multicolumn{3}{|c|}{\boldmath$G=S_4$} \\
 \hline
 $[K:k]$ & $H$  & $\operatorname{H}^1(G,F_{G/H})$\\
 \hline
 24 & $1$ & $\Z/2$   \\  \hline
 12 & $C_2a=\langle (1 , 2) \rangle$ & $0$ \\  \hline
 12 & $C_2b=\langle (1 , 2)(3 , 4) \rangle$ & $\Z/2$ \\  \hline
 8 & $C_3= \langle (1,2,3) \rangle$ & $\Z/2$ \\  \hline
 6 & $C_4 = \langle (1,2,3,4) \rangle$ & $0$ \\   \hline
 6 & $V_4 = \langle (1, 2),(3 , 4) \rangle$ & $0$ \\   \hline
 6 & $V_4 = \langle (1, 2)(3 , 4),(1,3)(2,4) \rangle$ & $0$ \\   \hline
 4 & $S_3=\langle (1,2,3),(1,2) \rangle $ & $0$ \\  \hline 
 3 & $D_4=\langle (1,2,3,4),(1,3) \rangle $ & $0$ \\   \hline
 2 & $A_4=\langle (1,2)(3,4),(1,2,3) \rangle $ & $0$ \\   
 
 \hline
\end{tabular}
\end{table}

\begin{table}[H]\caption{}
\label{a5_table}
\begin{tabular}{ |c|c|c|  }
 \hline
 \multicolumn{3}{|c|}{\boldmath$G=A_5$} \\
 \hline
 $[K:k]$ & $H$  & $\operatorname{H}^1(G,F_{G/H})$\\
 \hline
 60 & $1$ & $\Z/2$   \\  \hline
 30 & $C_2=\langle (1,2)(3,4) \rangle$ & $\Z/2$ \\  \hline
 20 & $C_3=\langle (1,2,3) \rangle$ & $\Z/2$ \\  \hline
 15 & $V_4 = \langle (1,2)(3,4), (1,3)(2,4) \rangle$ & $0$ \\  \hline
 12 & $C_5=\langle (1,2,3,4,5) \rangle$ & $\Z/2$ \\   \hline
 10 & $S_3=\langle (1,2,3),(1,2)(4,5) \rangle$ & $\Z/2$ \\    \hline
 6 & $D_{5}=\langle (1,2,3,4,5),(2,5)(3,4) \rangle$ & $\Z/2$ \\ \hline
 5 & $A_4 = \langle (1,2)(3,4), (1,2,3)\rangle $ & $0$ \\ 
 \hline
\end{tabular}
\end{table}

\begin{table}[H]\caption{}
\label{s5_table}
\begin{tabular}{ |c|c|c|  }
 \hline
 \multicolumn{3}{|c|}{\boldmath$G=S_5$} \\
 \hline
 $[K:k]$ & $H$  & $\operatorname{H}^1(G,F_{G/H})$\\
 \hline
 120 & $1$ & $\Z/2$   \\  \hline
 60 & $C_2a=\langle (1 , 2) \rangle$ & $0$ \\  \hline
 60 & $C_2b=\langle (1 , 2)(3 , 4) \rangle$ &  $\Z/2$ \\  \hline
 40 & $C_3 = \langle (1,2,3) \rangle$ & $\Z/2$ \\  \hline
 30 & $C_4= \langle (1,2,3,4) \rangle$ & $0$ \\   \hline
  30 & $V_4a = \langle (1 , 2),(3 , 4) \rangle$ & $0$ \\    \hline
 30 & $V_4b = \langle (1 , 2)(3 , 4), (1, 3)(2 , 4) \rangle$ & $0$ \\ \hline
 24 & $C_5 = \langle (1,2,3,4,5) \rangle$ & $\Z/2$ \\   \hline
 20 & $C_6 = \langle (1,2,3),(4,5)\rangle $ & $0$ \\   \hline
 20 & $S_3a=\langle (1 , 2 , 3),(1 , 2) \rangle$ & $0$ \\   \hline
 20 & $S_3b=\langle (1 , 2 , 3),(1 , 2)(4 , 5) \rangle$ & $\Z/2$ \\   \hline
 15 & $D_4 = \langle (1,2,3,4),(1,3) \rangle$ & $0$ \\   \hline
 12 & $D_{5}=\langle (1,2,3,4,5) , (2,5)(3,4) \rangle$ & $\Z/2$ \\   \hline
 10 & $A_4 = \langle (1, 2)(3,4) , (1,2,3) \rangle$ & $0$ \\   \hline
 10 & $S_3\times C_2  = \langle (1,2,3),(1,2),(4,5) \rangle$ & $0$ \\   \hline
 6 & $C_5 \rtimes C_4=\langle (1,2,3,4,5),(2,3,5,4) \rangle$ & $0$ \\   \hline
 5 & $S_4=\langle (1,2,3,4),(1,2) \rangle$ & $0$ \\   
 \hline
  2 & $A_5=\langle (1,2,3,4,5), (1,2,3) \rangle$ & $0$ \\   \hline
\end{tabular}
\end{table}

\begin{table}[H]
\caption{}
\label{a6_table}
\begin{tabular}{ |c|c|c|  }
 \hline
 \multicolumn{3}{|c|}{\boldmath$G=A_6$} \\
 \hline
 $[K:k]$ & $H$  & $\operatorname{H}^1(G,F_{G/H})$\\
 \hline
 360 & $1$ & $\Z/6$   \\  \hline
 180 & $C_2=\langle (1,2)(3,4) \rangle$ & $\Z/6$ \\  \hline
 120 & $C_3=\langle (1,2,3)\rangle$ & $\Z/2$ \\  \hline
 120 & $C_3=\langle (1,2,3)(4,5,6) \rangle$ & $\Z/2$ \\  \hline
 90 & $C_4 = \langle  (1,2,3,4)(5,6) \rangle$ & $\Z/6$ \\  \hline
  90 & $V_4a=\langle (1,2)(3,4), (1,3)(2,4) \rangle$ &  $\Z/3$ \\  \hline
 90 & $V_4b=\langle (1,2)(5,6), (1,2)(3,4) \rangle$ &  $\Z/3$ \\  \hline
 72 & $C_5 = \langle (1,2,3,4,5) \rangle$ & $\Z/6$ \\  \hline
 60 & $S_3a=\langle (1,2,3)(4,5,6), (1,2)(4,5)\rangle$ & $\Z/2$ \\   \hline
 60 & $S_3b = \langle (1,2,3), (1,2)(4,5) \rangle$ & $\Z/2$ \\    \hline
45 & $D_4 =\langle (1,2,3,4)(5,6), (1,3)(5,6) \rangle$ & $\Z/3$ \\ \hline
 40 & $C_3 \times C_3 = \langle (1,2,3), (4,5,6) \rangle$ & $\Z/2$ \\   \hline
 36 & $D_{5}=\langle (1,2,3,4,5), (2,5)(3,4) \rangle$ & $\Z/6$ \\   \hline
 30 & $A_4a=	\langle (1,2)(3,4), (1,2,3) \rangle$ & $0$ \\   \hline
 30 & $A_4b =\langle (1,2,3)(4,5,6), (1,4)(2,5) \rangle$ & $0$ \\   \hline
 20 & $(C_3 \times C_3)\rtimes C_2 = \langle (1,2,3), (4,5,6), (1,2)(4,5) \rangle$ & $\Z/2$ \\   \hline
 15 & $S_4a=\langle (1,2,3,4)(5,6), (1,2)(5,6) \rangle$ & $0$ \\   \hline
 15 & $S_4b =\langle  (1,3,5)(2,4,6), (1,6)(2,5) \rangle$ & $0$ \\   \hline
 10 & $(C_3 \times C_3) \rtimes C_4=\langle  (1,2,3),(4,5,6), (1,4)(2,5,3,6) \rangle$ & $\Z/2$ \\   \hline
 6 & $A_5a=\langle (1,2,3,4,5), (1,2,3) \rangle$ & $0$ \\   \hline
 6 & $A_5b = 	\langle (1,2,3,4,5), (1,4)(5,6) \rangle$ & $0$ \\   
 \hline
\end{tabular}
\end{table}

\begin{table}[H]\caption{}
\label{a7_table}
\scalebox{0.95}{
\begin{tabular}{ |c|c|c|  }
 \hline
 \multicolumn{3}{|c|}{\boldmath$G=A_7$} \\
 \hline
 $[K:k]$ & $H$  & $\operatorname{H}^1(G,F_{G/H})$\\
 \hline
 2520 & $1$ & $\Z/6$   \\  \hline
 1260 & $C_2=\langle (1,2)(3,4) \rangle$ & $\Z/6$ \\  \hline
  840 & $C_3a=\langle (1,2,3)\rangle $ &  $\Z/2$ \\  \hline
 840 & $C_3b=\langle (1,2,3)(4,5,6) \rangle$ &  $\Z/2$ \\  \hline
  630 & $C_4 = \langle  (1,2,3,4)(5,6) \rangle$ & $\Z/6$ \\   \hline
  630 & $V_4a=\langle (1,2)(3,4), (1,3)(2,4) \rangle$ & $\Z/3$ \\  \hline
  630 & $V_4b=\langle (1,2)(5,6), (1,2)(3,4) \rangle$ & $\Z/3$ \\  \hline
 504 & $C_5= \langle (1,2,3,4,5) \rangle$ & $\Z/6$ \\    \hline
420 & $C_6=\langle  (1,2)(3,4)(5,6,7) \rangle$ & $\Z/2$ \\ \hline
 420 & $S_3a=\langle (1,2,3)(4,5,6), (1,2)(4,5)\rangle$ & $\Z/2$ \\   \hline
 420 & $S_3b= \langle (1,2,3), (1,2)(4,5) \rangle$ & $\Z/2$ \\   \hline
 360 & $C_7=\langle (1,2,3,4,5,6,7) \rangle$ & $\Z/6$ \\   \hline
 315 & $D_4=\langle (1,2,3,4)(5,6), (1,3)(5,6) \rangle$ & $\Z/3$ \\   \hline
 280 & $C_3 \times C_3 = \langle (1,2,3), (4,5,6) \rangle$ & $\Z/2$ \\   \hline
 252 & $D_{5}=\langle (1,2,3,4,5), (2,5)(3,4) \rangle$ & $\Z/6$ \\   \hline
 210 & $A_4a=	\langle (1,2)(3,4),(1,2,3)  \rangle$ & $0$ \\   \hline
 210 & $A_4b=\langle (1,2,3)(4,5,6),(1,4)(2,5) \rangle$ & $0$ \\   \hline
 210 & $A_4c=\langle (1,5,3)(4,7,6), (2,6)(4,7)\rangle$ & $0$ \\   \hline
 210 & $A_4d=\langle  (1,2,5)(4,6,7), (3,4)(6,7)  \rangle$ & $0$ \\  \hline
 210 & $C_2 \times C_6=\langle (1,2)(3,5)(4,6,7), (1,3)(2,5) \rangle$ & $0$ \\   \hline
  210 & $D_{6}=\langle  (1,2)(3,5)(4,6,7), (1,2)(6,7)  \rangle$ & $0$ \\  \hline
 210 & $C_3 \rtimes C_4=\langle (2,3,6), (1,4,7,5)(3,6)  \rangle$ & $\Z/2$   \\  \hline
 140 & $(C_3 \times C_3) \rtimes C_2= \langle (1,2,3), (4,5,6), (1,2)(4,5) \rangle$ & $\Z/2$ \\  \hline
  126 & $C_5 \rtimes C_4=\langle (1,2)(4,5,7,6), (3,6,7,4,5) \rangle$ &  $\Z/6$   \\  \hline
 120 & $C_7 \rtimes C_3=\langle (1,7,4,2,6,5,3), (2,3,5)(4,6,7)  \rangle$ &  $\Z/2$ \\  \hline
 105 & $(C_6 \times C_2) \rtimes C_2=\langle  (1,2)(3,5)(4,6,7), (1,3)(2,5), (1,2)(6,7) \rangle$ & $0$ \\  \hline
  105 & $S_4a=\langle  (1,2,3,4)(5,6),(1,2)(5,6) \rangle$ & $0$ \\  \hline
 105 & $S_4b=\langle  (1,3,5)(2,4,6),(1,6)(2,5) \rangle$ & $0$ \\   \hline
 105 & $S_4c=\langle (1,2,3)(5,6,7), (2,3)(4,5,6,7) \rangle$ & $0$ \\    \hline
105 & $S_4d=\langle  (1,3,2)(5,6,7), (2,3)(4,5,6,7) \rangle$ & $0$ \\ \hline
 70 & $A_4 \times C_3=\langle (1,3,5)(4,6,7), (1,2,3) \rangle$ & $0$ \\   \hline
 70 & $(C_3 \times C_3) \rtimes C_4=\langle (1,2,3),(4,5,6), (1,4)(2,5,3,6) \rangle$ & $\Z/2$ \\   \hline
 42 & $A_5a=\langle (1,2,3,4,5), (1,2,3) \rangle$ & $0$ \\   \hline
 42 & $A_5b= 	\langle (1,2,3,4,5), (1,4)(5,6) \rangle$ & $0$ \\   \hline
 35 & $(A_4 \times C_3) \rtimes C_2=\langle  (2,3)(5,7), (1,2)(4,5,6,7), (2,3)(5,6) \rangle$ & $0$ \\   \hline
 21 & $S_5=\langle  (1,2)(3,7), (2,6,5,4)(3,7) \rangle$ & $0$ \\   \hline
 15 & $\operatorname{PSL}(3,2)a=\langle (1,4)(2,3), (2,4,6)(3,5,7) \rangle$ & $0$ \\   \hline
 15 & $\operatorname{PSL}(3,2)b=\langle  (1,3)(2,7), (1,5,7)(3,4,6)  \rangle$ & $0$ \\   \hline
 7 & $A_6=\langle (1,2,3,4,5), (4,5,6)   \rangle$ & $0$ \\   \hline
\end{tabular}
}
\end{table}

\end{document}